\numberwithin{equation}{section}
\numberwithin{figure}{section}
\newtheorem{theorem}{Theorem}[section]
\newtheorem{corollary}[theorem]{Corollary}
\newtheorem{definition}[theorem]{Definition}
\newtheorem{example}[theorem]{Example}
\newtheorem{lemma}{Lemma}[section]
\newtheorem{fact}{Fact}[section]
\newtheorem{proposition}[theorem]{Proposition}
\newtheorem{remark}[theorem]{Remark}
\newcommand{\hilbertH}{{ E}}
\newcommand{\rank}{\text{rank}\,}
\begin{document}

\title[On tangent cone to systems of inequalities and equations]{On tangent cone to systems of inequalities and equations  in  Banach spaces under relaxed constant rank condition
%	\tnoteref{mytitlenote}
}

	\subjclass[2010]{26B10, 46C05, 47J07, 49K27, 90C30.}

\keywords{tangent cone, relaxed constant rank condition, Banach space, rank theorem, Ljusternik theorem, Lagrange multipliers, Abadie condition}
\author{
	Ewa M. Bednarczuk$^1$
}
\author{
	Krzysztof W. Le\'sniewski$^2$
}
\author{
	Krzysztof E. Rutkowski$^3$ 
}
\thanks{$^1$ Warsaw University of Technology, 00-662 Warszawa, Koszykowa 75,\\ Systems Research Institute of PAS, PAS, 01-447 Warsaw, Newelska 6, 		
}
\thanks{$^2$ Systems Research Institute of the Polish Academy of Sciences,
}
\thanks{$^3$ Cardinal Stefan Wyszy\'nski University, 01-815 Warsaw, Dewajtis 5.}

\begin{abstract}
Under the relaxed constant rank condition, introduced by Minchenko and Stakhovski, we prove that the linearized cone is contained in the  tangent cone (Abadie condition) for sets represented as solution  sets to systems of finite numbers of inequalities and equations given by continuously differentiable functions defined on Banach spaces.
\end{abstract}
\maketitle

\section*{Introduction}
Conditions ensuring the equality between the tangent  and the linearized cones to the constraint set are at the core of 
optimality conditions in constrained  optimization.  
Let $\hilbertH$ be a Banach space and 
\begin{equation}\label{set:F}
\begin{array}{ll}		
{\mathcal F}:=\{ x\in \hilbertH \mid h_i(x)=0,\ i\in I_0,\ h_i(x)\leq 0,\ i\in I \},
\end{array}
\end{equation}
where $h_i:\ \hilbertH \rightarrow \mathbb{R}$, $i\in I_0\cup I$ are  $C^1$ functions in a neighbourhood of $x_0\in {\mathcal F}$.  Sets $I_0, I$ are finite,   $I_0\cup I=\{1,2,\dots,n\}$, we admit either $I_0$  or $I$ to be empty.

Abadie condition has been introduced in \cite{MR0218116}. It says that the tangent and linearized cone coincide (see section \ref{section:tangent_and_linearized_cones}. In \cite{directional_derivative_Janin} it was shown that CRCQ implies Abadie constraint qualification. In the case $I_0=\emptyset$ and $h_i:\ \mathbb{R}^n\rightarrow \mathbb{R}$, $i\in I$ are convex  it was shown in \cite[Theorem 3.5]{MR1479609} that Abadie condition is equivalent to the metric regularity of the respective set-valued mapping. 

In finite-dimensional settings relationships between constant rank constraint qualification and Abadie condition were investigated in \cite{MR2968256,constant_rank_constraint_Andreani}, and for relationships between relaxed Mangasarian-Fromovitz and Abadie condition see e.g., \cite{Kruger2014}. When $h_i$, $i\in I_0\cup I$ are of class $C^2$, the Abadie condition has been investigated in 
\cite{MR3579742}.
%MR3413596
The survey of the existing finite-dimensional constraint qualification ensuring the Abadie conditions can be found in \cite{constraint_qualificatuions_solodov}.	

In infinite dimensional case, the most commonly used constraint qualification is  Robinson's condition (see \cite{MR3919415,Kurcyusz1976,MR0410522}) 
and the relationship  to Lagrange multipliers (see \cite{Bonnans_Shapiro,BFb0004508,MR526427}).

Due to  limitations of  applicability of  Robinson's condition  there exist  a number of recent attempts to define other constraint qualification without referring to active index set (for example various forms of cone-continuity properties in \cite{2019arXiv191206531B} and the references therein).

In \cite[Proposition 1]{MR2179245}, in metric spaces, it was shown that calmness implies Abadie condition.	Recently, the Abadie condition on manifolds was investigated in \cite{2018arXiv180406214B}.

In the present investigation we consider constrained optimization problems defined on  Banach spaces with finite number of constraints. A natural example of such an optimization problem is the projection in Hilbert space onto set of the form \eqref{set:F}.

%It is a direct observation that LICQ implies MFCQ. 

%Connection  between relaxed constant rank constraint qualification and Abadie condition has been shown in \cite{MR2801389} (see also \cite{Minchenko2019}).

%In \cite{MR0410522} it was shown that  MFCQ characterizes metric regularity of constraint systems.

The aim of the present paper is to investigate the relationship between the Relaxed Constant Rank Constraint Qualification (RCRCQ, Definition \ref{def:rcrcq}), the form of the tangent cone to ${\mathcal F}$ at a given $x_0\in {\mathcal F}$ (the Abadie condition) and the existence of  Lagrange multipliers to the problem
\begin{equation}\tag{P}\label{problem:P}
\begin{array}{ll}
\text{minimize } & h_0(x) \\
\text{s.t. } & x\in {\mathcal F},
\end{array}
\end{equation}
$h_0:\ \hilbertH\rightarrow \mathbb{R}$, in Banach space setting.

In the finite dimensional setting, when $\hilbertH=\mathbb{R}^n$, this question has been discussed in Theorem 1 of \cite{MR2801389} (see also \cite{Minchenko2019}). When dealing with the infinite-dimensional case our main tools are Banach space versions of local representation theorem (Theorem \ref{theorem:local_representation}, \cite[Theorem 2.1.15]{manifolds_tensor_vol2}),  rank theorem (Theorem \ref{theorem:rank}, \cite[Theorem 2.5.15]{manifolds_tensor_vol2}) and Ljusternik theorem (see e.g., \cite[section 0.2.4]{theory_of_external_problems_Ioffe}).

The main results are Theorem \ref{theorem:tangent_cone} and Proposition \ref{proposition:nonmpty_lagrange}.
The proof of   Theorem \ref{theorem:tangent_cone} relies mainly on Proposition \ref{proposition:functional_dependence}. This proposition  can be viewed as a variant of the Implicit Function type theorem and relates constant rank condition with functional independence and functional dependence of a finite number of $C^{1}$ functions defined on a Banach space. 
In  Proposition \ref{proposition:functional_dependence} we use the concept of functional dependence/independence which generalizes to Banach spaces the concept of \cite[Example 2.5.16]{manifolds_tensor_vol2} and the proof of Proposition \ref{proposition:functional_dependence} is based on  	 rank theorem (Theorem \ref{theorem:rank}).

The organization of the paper is as follows. Section \ref{section:constant_rank_condition} is devoted to constant rank condition (CRC).  With the help of CRC, in Section \ref{section:rank_theorem_under_CRC}, we  reformulate  the classical rank theorem in the case considered (finite number of functions defined on a Banach space). In Section \ref{section:functional_dependence} we prove Proposition \ref{proposition:functional_dependence} which is the main tool in deriving our main results of Section \ref{section:tangent_and_linearized_cones}. In Section \ref{section:tangent_and_linearized_cones}, in Definition \ref{def:rcrcq}, we define the relaxed constant rank condition (RCRCQ) and we use it to prove  the Abadie condition (Theorem \ref{theorem:tangent_cone}). In Section \ref{section:functional_dependence_without_CRC} we discuss other concepts of functional dependence and in Section \ref{section:relaxed_constant_rank_and_lagrange_multipliers} in Proposition \ref{proposition:nonmpty_lagrange} we prove the nonemptiness of the Lagrange multipliers set under RCRCQ.

\section{Preliminary facts}\label{section:preliminary}
We start with some preliminary facts and definitions which will be useful in the sequel.

Let $\hilbertH$ be a real Banach space and denote $\hilbertH^*$ its dual. Let $\langle \cdot , \cdot \rangle:\ \hilbertH^* \times \hilbertH\rightarrow \mathbb{R}$ denote the duality mapping between spaces $\hilbertH$, $\hilbertH^*$. We have $\varphi (x)=\langle \varphi  \, , \, x\rangle $ for all $\varphi\in \hilbertH^*$, $x\in \hilbertH$. 
Let $\|\cdot\|$ denote norm on $\hilbertH$ and $\|\cdot\|_*$ denote norm on $\hilbertH^*$.
\begin{definition}
	The closed subspace $H$ of a Banach space $\hilbertH$ splits or is complemented, if there is a closed subspace $G\subset \hilbertH$ such that $\hilbertH=H\oplus G$ (i.e. $\hilbertH=H+ G$ and $H\cap G=\{0\}$), where $\oplus$ denotes the direct sum of $H$ and $G$.
\end{definition}
%\begin{proposition}\label{proposition_splits} (\cite[Theorem 2.1.15]{manifolds_tensor_vol2})
%	If $\hilbertH$ is a Hilbert space and $H$ a closed subspace, then $\hilbertH=H\oplus H^\perp$. Thus every closed subspace of a Hilbert space splits (see e.g. Definition 2.1.14 of \cite{manifolds_tensor_vol2}).
%\end{proposition}
%\begin{proposition}(\cite[Theorem 2.5.12]{manifolds_tensor_vol2} Local Immersion Theorem)
%	Let $f:\ U\subset \hilbertH\rightarrow H$ be of class $C^r$, $r\geq1$, $u_0\in U$ and suppose $Df(u_0)$ is one-to-one has closed split image $H_1$ with closed complement $H_2$ (if $\hilbertH=\mathbb{R}^m$ and $H=\mathbb{R}^n$ assume only that $Df(u_0)$ has trivial kernel). Then there are two open sets $U^\prime \subset H$, $f(u_0)\in U^\prime$  and $V\subset \hilbertH\oplus H_2$, $u_0=in V\cap (\hilbertH\times \{0\})\subset U$ , and a $C^r$ diffeomorphism $\varphi: U^\prime \rightarrow V$ such that $(\varphi \circ f)(e)=(e,0)$ for all $e\in V\cap (\hilbertH\times \{0\})\subset U\subset \hilbertH$. In addition, the Banach space isomorphism $D\varphi(u^\prime): H=H_1\oplus H_2\rightarrow \hilbertH\oplus H_2$ is the identity on $H_2$ for any $u^\prime \in U^\prime$ and $D\varphi (f(u_0))$ is a block diagonal operator.
%\end{proposition}
%Let $\hilbertH$  Hilbert space with the inner product $\langle \cdot \mid \cdot \rangle$  and the norm $\|\cdot\|^2=\langle \cdot \mid \cdot \rangle$. In the sequel $\hilbertH$ denotes a Hilbert space, if not stated otherwise.

Let $f:\ U\rightarrow \mathbb{R}^n$, $U\subset \hilbertH$ open set and $x_0\in U$. Fr\'echet derivative $Df(x_0):\ U \rightarrow \mathbb{R}^n$ is a linear operator defined as
\begin{equation*}
\lim_{\Delta x\rightarrow 0} \frac{\|f(x_0+\Delta x)-f(x_0)-Df(x_0) \Delta x\|}{\|\Delta x\|}=0.
\end{equation*}
Moreover, assuming $f=[f_1,\dots,f_n]$, where $f_i:\ U\rightarrow \mathbb{R}$, we write $Df(x_0)=[D f_1(x_0),\dots,D f_n(x_0)]^T$, where $D f_i:\ U \rightarrow \mathbb{R}$.

%We will express Banach space $\hilbertH$ as a direct sum of finite-dimensional space and kernel of $Df(x_0)$. 
In the sequel we will use some representations of   the space $\hilbertH$ as a direct sum of closed subspaces.

\begin{fact}\label{fact:existence_dial_basis}(see also Proposition 3.2.1 of \cite{MR3526021})
	Let $\hilbertH$ be a Banach space and consider linearly independent vectors $\{e_1,\dots,e_\kappa\}\in \hilbertH$, $\kappa\in \mathbb{N}$. Let $X_1=\text{span} \, \{e_1,\dots,e_\kappa\}$. 
	Then there exist linearly independent functionals $e_1^*,\dots,e_\kappa^* \in \hilbertH^*$ such that 
	%	
	%	
	%	
	%	form a basis in 
	%	
	%	Functionals $\{e_1^*,\dots,e_\kappa^*\}\in \hilbertH^*$ are called biorthogonal to $e_i$, $i=1,\dots,\kappa$ if
	\begin{equation}\label{delta_kroneckera}
	e_i^* e_j = \left\{ \begin{array}{ll}
	1 & \text{if } i=j,\\
	0 & \text{if } i\neq j.
	\end{array}\right.
	\end{equation}
	and $X_1^*:=\text{span}\, \{ e_1^*,\dots, e_\kappa^* \}$ is a $\kappa$-dimensional subspace of $\hilbertH^*$. 
\end{fact}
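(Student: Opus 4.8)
The plan is to construct the dual functionals one at a time using the Hahn–Banach theorem, exploiting linear independence of $\{e_1,\dots,e_\kappa\}$. First I would fix an index $i\in\{1,\dots,\kappa\}$ and consider the subspace $Y_i := \mathrm{span}\,\{e_j : j\neq i\}$, which is a closed (being finite-dimensional) proper subspace of $X_1$ not containing $e_i$. By linear independence, $\mathrm{dist}(e_i, Y_i) > 0$: indeed, any element of $Y_i$ differs from $e_i$, and since $Y_i$ is finite-dimensional the distance is attained and hence strictly positive. Therefore, applying the analytic form of the Hahn–Banach theorem on the finite-dimensional space $X_1$ (or directly citing the standard corollary that separates a point from a closed subspace), there exists a functional $\varphi_i \in X_1^*$ with $\varphi_i(e_i) = 1$ and $\varphi_i|_{Y_i} = 0$, i.e. $\varphi_i(e_j) = \delta_{ij}$ for all $j$. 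Extending $\varphi_i$ to all of $\hilbertH$ by Hahn–Banach (preserving the norm, though the norm bound is not needed here) yields $e_i^* \in \hilbertH^*$ satisfying \eqref{delta_kroneckera}.

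Next I would verify that the functionals $e_1^*,\dots,e_\kappa^*$ obtained this way are linearly independent in $\hilbertH^*$. Suppose $\sum_{i=1}^\kappa \lambda_i e_i^* = 0$ in $\hilbertH^*$. Evaluating this functional at $e_j$ and using \eqref{delta_kroneckera} gives $0 = \sum_{i=1}^\kappa \lambda_i e_i^*(e_j) = \lambda_j$ for each $j\in\{1,\dots,\kappa\}$, so all coefficients vanish. Hence $X_1^* = \mathrm{span}\,\{e_1^*,\dots,e_\kappa^*\}$ is genuinely $\kappa$-dimensional.

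The argument is essentially routine; the only point requiring a little care is the separation step, namely producing a single functional on the finite-dimensional space $X_1$ that is $1$ on $e_i$ and $0$ on the complementary span. This can be done purely by finite-dimensional linear algebra (the map $x \mapsto (\text{coordinates of } x \text{ in the basis } e_1,\dots,e_\kappa)$ is an isomorphism $X_1 \to \mathbb{R}^\kappa$, and one takes the $i$-th coordinate functional), after which the only genuinely Banach-space ingredient is the Hahn–Banach extension from $X_1$ to $\hilbertH$. I do not anticipate any real obstacle; the statement is a standard fact, included here for completeness and to fix notation (the "dual basis" $e_i^*$) used in the subsequent direct-sum decompositions of $\hilbertH$.
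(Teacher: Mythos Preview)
Your argument is correct and is the standard route to this fact. Note, however, that the paper does not actually supply its own proof of Fact~\ref{fact:existence_dial_basis}: it is stated as a preliminary fact with a reference (Proposition~3.2.1 of \cite{MR3526021}) and is used without further justification. So there is nothing in the paper to compare against beyond the citation; your proposal simply fills in the omitted details, and the coordinate-functional construction on $X_1$ followed by a Hahn--Banach extension is exactly what one expects here.
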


%The following Proposition is based on Lemma 4.21 of \cite{MR1157815}
\begin{proposition}\label{proposition:split} Let $\hilbertH$ be a      Banach space, $f:\hilbertH\rightarrow\mathbb{R}^{\kappa}$, $f=[f_{1},\dots,f_{\kappa}]$, $f_{i}:\hilbertH\rightarrow\mathbb{R}$, $i=1,\dots,\kappa$,  $x_{0}\in\hilbertH$.
	Assume that functionals $D f_i(x_0)$, $i= 1,\dots,\kappa$, are linearly independent. 
	% Let $\hilbertH_1=\text{span}\, \{ (Df_1(x_0))^*,\dots, (Df_\kappa(x_0))^*\}$. 
	Then $\hilbertH=\hilbertH_1\oplus \hilbertH_2$, where $\hilbertH_2:=\text{ker} \, Df(x_0) ,$
	$ \hilbertH_1:=\text{span}\, \{ Df_1(x_0)^*,\dots, Df_\kappa(x_0)^*\}$ with $Df_i(x_0)^* $, $i=1,\dots,\kappa$, defined as in Fact \ref{fact:existence_dial_basis}.
\end{proposition}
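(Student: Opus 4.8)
The plan is to exhibit explicit complementary subspaces and then verify the three conditions encoded in the symbol $\oplus$ (as in the definition of a splitting subspace): both summands closed, trivial intersection, and sum equal to $\hilbertH$.

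First I would construct the biorthogonal vectors. Since $Df_1(x_0),\dots,Df_\kappa(x_0)\in\hilbertH^*$ are linearly independent, the bounded linear operator $Df(x_0)=[Df_1(x_0),\dots,Df_\kappa(x_0)]^T\colon\hilbertH\to\mathbb{R}^\kappa$ is onto: were its range contained in a hyperplane $\{y\in\mathbb{R}^\kappa:\sum_j a_jy_j=0\}$ with $(a_1,\dots,a_\kappa)\neq 0$, then $\sum_j a_jDf_j(x_0)=0$ in $\hilbertH^*$, contradicting linear independence. Hence I can pick vectors $Df_i(x_0)^*\in\hilbertH$, $i=1,\dots,\kappa$, with $Df(x_0)\big(Df_i(x_0)^*\big)$ equal to the $i$-th unit vector of $\mathbb{R}^\kappa$, equivalently $Df_j(x_0)\big(Df_i(x_0)^*\big)=\delta_{ij}$; these are exactly the vectors supplied by Fact \ref{fact:existence_dial_basis}. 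Being dual to linearly independent functionals they are themselves linearly independent, so $\hilbertH_1=\text{span}\,\{Df_1(x_0)^*,\dots,Df_\kappa(x_0)^*\}$ is $\kappa$-dimensional and hence closed.

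Next I would check the algebraic splitting. If $x=\sum_i c_i\,Df_i(x_0)^*$ also lies in $\hilbertH_2=\ker Df(x_0)$, applying $Df_j(x_0)$ yields $c_j=0$ for every $j$, so $x=0$ and $\hilbertH_1\cap\hilbertH_2=\{0\}$. For surjectivity of the sum, given $x\in\hilbertH$ I would set $x_1:=\sum_j\big(Df_j(x_0)x\big)\,Df_j(x_0)^*\in\hilbertH_1$; then $Df_j(x_0)(x-x_1)=0$ for all $j$, so $x-x_1\in\ker Df(x_0)=\hilbertH_2$ and $x=x_1+(x-x_1)$. Finally $\hilbertH_2$ is closed as the kernel of the bounded operator $Df(x_0)$ (bounded because $f$ is Fr\'echet differentiable at $x_0$), which completes $\hilbertH=\hilbertH_1\oplus\hilbertH_2$.

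I do not expect a genuine obstacle. The only step that is more than bookkeeping is obtaining the biorthogonal vectors inside $\hilbertH$ itself rather than in the bidual $\hilbertH^{**}$, and this is precisely what surjectivity of the finite-rank operator $Df(x_0)$ onto $\mathbb{R}^\kappa$ (equivalently, Fact \ref{fact:existence_dial_basis}) guarantees.
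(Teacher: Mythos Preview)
Your proof is correct and follows the same overall strategy as the paper: obtain biorthogonal vectors $Df_i(x_0)^*\in\hilbertH$ and then split an arbitrary $x\in\hilbertH$ via the projection $x_1=\sum_j \big(Df_j(x_0)x\big)\,Df_j(x_0)^*$, checking that $x-x_1\in\ker Df(x_0)$. The decomposition step and the verification of $\hilbertH_1\cap\hilbertH_2=\{0\}$ are essentially identical in both arguments.

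The one genuine difference is how the biorthogonal vectors are produced. You argue directly that $Df(x_0):\hilbertH\to\mathbb{R}^\kappa$ is surjective (else a nontrivial linear relation among the $Df_i(x_0)$ would follow) and take preimages of the standard basis vectors; this lands you immediately in $\hilbertH$. The paper instead applies Fact~\ref{fact:existence_dial_basis} to the linearly independent elements $Df_i(x_0)\in\hilbertH^*$, which a priori yields elements $Df_i(x_0)^*\in\hilbertH^{**}$, and then invokes the canonical isomorphism $X_1\cong X_1^{**}$ for the finite-dimensional space $X_1=\text{span}\{Df_i(x_0)\}$ to interpret these as elements on which the $Df_j(x_0)$ can be evaluated. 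Your route is more elementary and sidesteps the passage through the bidual (which in the paper is somewhat terse); the paper's route ties the construction explicitly to Fact~\ref{fact:existence_dial_basis}. Your explicit remarks that $\hilbertH_1$ is closed (finite-dimensional) and $\hilbertH_2$ is closed (kernel of a bounded operator) are also a useful addition, since the statement requires a topological direct sum.
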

\begin{comment}
\begin{proof}
Let $M=\text{span}\, \{ Df_1(x_0)^*,\dots, Df_\kappa(x_0)^*\}$. By Fact \ref{fact:existence_dial_basis}, $\text{dim}\, M = \kappa$. For any $x\in M$ we have 
\begin{equation}
x=\sum_{i=1}^\kappa \alpha_i(x) Df_i(x_0)^*
\end{equation}
for some uniquely determined linear continuous functionals $\alpha_i: M \rightarrow \mathbb{R} $, $i=1,\dots,\kappa$. By Hahn-Banach Theorem, we can extend $\alpha_i$, $i=1,\dots,\kappa$ to be defined on the whole space $\hilbertH$.
\end{proof}
\end{comment}

\begin{proof} Taking  $e_{i}:=D f_i(x_0)\in \hilbertH^{*}$, $i= 1,\dots,\kappa$, we get
	$X_1:=\text{span}\, \{ e_1,\dots, e_\kappa\}=
	\text{span}\, \{ D f_1(x_0),\dots, D f_\kappa(x_0)\}$, $\text{dim\,}X_1=\kappa.$
	By 
	Fact \ref{fact:existence_dial_basis}, there exist vectors $e_{i}^{*}:=Df_i(x_0)^*\in  \hilbertH^{**}$, $i=1,\dots,\kappa$ 
	satisfying \eqref{delta_kroneckera}, i.e.
	\begin{equation} 
	\label{delta_1}
	e_{i}^{*}(e_{j})=Df_i(x_0)^*(Df_{j}(x_{0}))=\left\{\begin{array}{ll} 
	1&\text{if  }i=j\\
	0&\text{if  }i=j
	\end{array}
	\right.,
	\end{equation}
	which  are linearly independent and        
	$X_1^*=\text{span}\, \{ Df_i(x_0)^{*},\ 1=1,\dots,\kappa\}\subset \hilbertH^{**}$ is a $\kappa$-dimensional subspace of $\hilbertH^{**}$.
	
	%Since any element $e_j^*\in X_1^{**}$ can be identified with an element 
	
	Since $X_1$ is the finite-dimensional space  there exists canonical isomorphism (see e.g. By Proposition 10.4 of \cite{linear_algebra_and_opt}) $eval_{X_1}:\ X_1\rightarrow X_1^{**}$ defined as
	\begin{equation*}
	%eval_{X_1}(v)=eval_v\quad \text{for all} \ v\in X_1,
	eval_{X_1}(v)(u^*)= u^* (v) \quad \text {for every} \   v\in X_1,\ u^*\in X_1^*
	\end{equation*}
	and we have
	%Since $\hilbertH$ is  reflexive, we have
	$e_{i}^{*}(e_{j})=e_{j}(e_{i}^{*})=Df_i(x_0)Df_{j}(x_{0})^{*}$ for all $i,j\in \{1,\dots,\kappa\}$. 
	By \eqref{delta_1},
	\begin{equation} 
	\label{delta_2}
	Df(x_{0})(Df_{i}(x_{0})^{*})=v_{i},\ \ \ v_{i}=[0,\dots,\underbrace{1}_{i},\dots,0]\in\mathbb{R}^{\kappa}.
	\end{equation}
	Now we show that $\hilbertH=X_{1}^{*}\oplus \text{ker }Df(x_{0})$. For any  $x\in \hilbertH$,  $Df(x_{0})(x)\in \mathbb{R}^{\kappa}$, and there exist $\alpha_{i}(x)\in \mathbb{R}$, $i=1,\dots,\kappa$  such that
	$Df(x_{0})(x)=\sum_{i=1}^{\kappa}\alpha_{i}(x)v_{i}.$
	Take 
	$m\in X_{1}^{*}$, $m:=\sum_{i=1}^{\kappa}\alpha_{i}(x)Df_{i}(x_{0})^{*}$. By \eqref{delta_2}, we have
	$$
	Df(x_{0})(x-m)=Df(x_{0})(x)-Df(x_{0})(m)=\sum_{i=1}^{\kappa}\alpha_{i}(x)v_{i}-\sum_{i=1}^{\kappa}\alpha_{i}(x)Df(x_{0})(Df_{i}(x_{0})^{*})=0.
	$$
	This shows that $x-m\in\text{ker\,} Df(x_{0})$  which proves the assertion with $\hilbertH_{1}:=X_{1}^{*}$ and $\hilbertH_{2}:=\text{ker\,} Df(x_{0}).$
\end{proof}

%\begin{proof}
%We have $\hilbertH_2=\text{ker} \, Df(x_0)=\{ h \in \hilbertH \mid Df(x_0)h=0  \}=\{ h \in \hilbertH \mid Df_i(x_0)h=0,\ i=1,\dots, \kappa  \}$.
%Let $ \hilbertH_1=\text{span}\, \{ (Df_1(x_0))^*,\dots, (Df_\kappa(x_0))^*\}$. 
%	For any $x\in \hilbertH_1$ we have
%	$x=\sum_{i=1}^{\kappa} \alpha_i(x) (Df_i(x_0))^*$ where $\alpha_i$, $i=1,\dots,\kappa$ are uniquely  determined continuous linear functionals on $\hilbertH_1$. By Fact \ref{fact:existence_dial_basis},   $\alpha_i:\ \hilbertH_1\rightarrow \mathbb{R}$, $i=1,\dots,\kappa$ form a basis of the closed linear subspace %span of $\{\alpha_i,\ i=1,\dots,\kappa\}$. 
%Since $\alpha_i$, $i=1,\dots, \kappa $ are linearly independent 
%	$\hilbertH_1^*=\text{span }\, 
%	\{\alpha_i, \ i=1,\dots, \kappa\}$.
%	Moreover $\text{dim}(\text{span}\, \{ Df_i(x_0),\ 1=1,\dots,\kappa\})=\kappa$, hence there exists isomorphism $g:\  Df_i(x_0) \rightarrow \hilbertH_1^*$ and $x=\sum _{i=1}^\kappa g(Df_i(x_0))x\cdot (Df_i(x_0))^{*}$. Since $\alpha_i(x)=g(Df_i(x_0))x$ for any $x\in \hilbertH_1$,   hence $\alpha_i=g(Df_i(x_0))$. By Hahn-Banach Theorem, each  $\alpha_i$, $i=1,\dots,\kappa$ can be  extended to the whole of  $\hilbertH$.
%	Let $N$ be the  intersection of the null spaces of these extensions. Then,  $\hilbertH=\hilbertH_1\oplus N$, where $N= \bigcap_{i=1}^\kappa \text{ker }\, \alpha_i$. 
%	Let us observe that $\bigcap_{i=1}^\kappa \text{ker }\, \alpha_i=\bigcap_{i=1}^\kappa \text{ker }\, g(Df_i(x_0))=\bigcap_{i=1}^\kappa \text{ker }\, Df_i(x_0)$. Hence $N=\text{ker} Df(x_0)$.
%\end{proof}

\begin{theorem}\label{theorem:local_representation}(\cite[Theorem 2.5.14]{manifolds_tensor_vol2} Local Representation Theorem)
	Let $f:\ U \rightarrow \mathbb{R}^n$ be of class $C^r$, $r\geq1$ in a neighbourhood of $x_0\in U$, $U\subset \hilbertH$ open set. Let $F_1$ be closed split image of $Df(x_0)$ with closed complement $F_2$.
	Suppose that  $Df(x_0)$ has split kernel $\hilbertH_2=\ker Df(x_0)$ with closed complement $\hilbertH_1$. 
	%Let $\hilbertH_1:=\hilbertH_2^\perp$, where $\hilbertH_2=\ker Df(x_0)$.
	% and suppose $Df(x_0):\ \hilbertH\rightarrow \mathbb{R}^n$ 
	%	has 
	%closed split image $\hilbertH_1$ with closed complement $\hilbertH_2$ and 
	%	closed split kernel $\hilbertH_2$ with closed complement $\hilbertH_1$ (if $\hilbertH=\mathbb{R}^m$, $\hilbertH=\mathbb{R}^n$ assume that $\rank (Df(x_0))=k$, $k\leq n$, $k\leq m$, so that $\hilbertH_2=\mathbb{R}^{n-l}$, $\hilbertH_1=\mathbb{R}^k$, $\hilbertH_2=\mathbb{R}^k$, $\hilbertH_2=\mathbb{R}^{m-k}$). 
	Then there are
	open sets $U_1\subset U \subset \hilbertH_1\oplus \hilbertH_2 $ and $U_2 \subset F_1 \oplus \hilbertH_2$, $x_0\in U_2$  and a $C^r$ diffeomorphism 
	$\psi:\ U_2\rightarrow U_1 $ such that $(f\circ \psi)(u,v)=(u,\eta(u,v))$ for any  $(u,v)\in U_1$, where 
	$u\in \hilbertH_1$, $v\in \hilbertH_2$ and  $\eta:\ U_2\rightarrow \hilbertH_2$ is a $C^r$ map satisfying $D\eta(\psi^{-1}(x_0))=0$.
\end{theorem}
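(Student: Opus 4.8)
The plan is to realize $\psi$ as the local inverse of a suitably modified copy of $f$, so that the whole statement reduces to the inverse function theorem in Banach spaces. First I would split the range: since $F_1 = \operatorname{im} Df(x_0)$ splits in $\mathbb{R}^n$ with closed complement $F_2$, write $\mathbb{R}^n = F_1\oplus F_2$ and decompose $f = (f_1, f_2)$ with $f_1 = \pi_{F_1}\circ f$ and $f_2 = \pi_{F_2}\circ f$; both are $C^r$ near $x_0$, the projections attached to a splitting being continuous. From $\operatorname{im} Df(x_0) = F_1$ we get $Df_2(x_0) = \pi_{F_2}\circ Df(x_0) = 0$, hence $\ker Df_1(x_0) = \ker Df(x_0) = \hilbertH_2$. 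Restricting $Df_1(x_0)$ to the complement $\hilbertH_1$ then gives a continuous linear bijection $Df_1(x_0)|_{\hilbertH_1}\colon \hilbertH_1\to F_1$, which is a toplinear isomorphism by the open mapping theorem (elementarily so here, $F_1$ being finite dimensional).

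Next I would introduce $\varphi\colon U\to F_1\oplus\hilbertH_2$ by $\varphi(u,v) := (f_1(u,v),\, v)$ for $u\in\hilbertH_1$, $v\in\hilbertH_2$, using $\hilbertH = \hilbertH_1\oplus\hilbertH_2$. Then $\varphi$ is $C^r$ and $D\varphi(x_0)(u,v) = (Df_1(x_0)u,\, v)$ — the $v$-contribution to the first slot vanishes since $\hilbertH_2 = \ker Df_1(x_0)$ — which is a toplinear isomorphism of $\hilbertH_1\oplus\hilbertH_2$ onto $F_1\oplus\hilbertH_2$, with the manifestly continuous inverse $(a,b)\mapsto\big((Df_1(x_0)|_{\hilbertH_1})^{-1}a,\, b\big)$. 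The Banach-space inverse function theorem then yields open sets $U_1\ni x_0$ in $\hilbertH_1\oplus\hilbertH_2$ and $U_2\ni\varphi(x_0)$ in $F_1\oplus\hilbertH_2$ such that $\varphi\colon U_1\to U_2$ is a $C^r$ diffeomorphism; I set $\psi := \varphi^{-1}\colon U_2\to U_1$.

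It remains to read off the conclusion by direct computation. For $(u,v)\in U_2$, writing $(x,y) := \psi(u,v)$, the identity $\varphi(x,y) = (u,v)$ forces $y = v$ and $f_1(x,y) = u$, so $(f\circ\psi)(u,v) = \big(f_1(x,y),\, f_2(x,y)\big) = (u,\, \eta(u,v))$ with $\eta := f_2\circ\psi\colon U_2\to F_2$. As a composition of $C^r$ maps, $\eta$ is $C^r$, and the chain rule gives $D\eta(\varphi(x_0)) = Df_2(x_0)\circ D\psi(\varphi(x_0)) = 0$ since $Df_2(x_0) = 0$; because $\psi^{-1} = \varphi$, this is the asserted $D\eta(\psi^{-1}(x_0)) = 0$. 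Up to the mild notational identifications built into the statement (between components in $F_1\oplus\hilbertH_2$ and in $\mathbb{R}^n$, and the placement of $x_0$ and its image under $\varphi$), this is precisely the claim.

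I do not expect a genuine obstacle: the theorem is the standard repackaging of the Banach inverse function theorem, and in the present finite-range setting the splitting hypotheses on the codomain hold automatically. The only care required is the bookkeeping between the two direct-sum decompositions — that of the domain $\hilbertH = \hilbertH_1\oplus\hilbertH_2$ and that of the range $\mathbb{R}^n = F_1\oplus F_2$ — together with checking that $\pi_{F_1},\pi_{F_2}$ and the linear inverse $(Df_1(x_0)|_{\hilbertH_1})^{-1}$ are continuous, which is exactly what the ``splits/complemented'' hypotheses provide. Since the statement is quoted verbatim from \cite[Theorem 2.5.14]{manifolds_tensor_vol2}, one may alternatively just invoke it.
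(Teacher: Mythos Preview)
Your argument is correct and is the standard proof of the local representation theorem via the Banach inverse function theorem. The paper, however, does not supply its own proof of this statement: Theorem~\ref{theorem:local_representation} is merely quoted from \cite[Theorem~2.5.14]{manifolds_tensor_vol2} and used as a black box later on, so there is nothing to compare against. Your closing remark that one may simply invoke the cited reference is therefore exactly what the paper does.

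One small point worth recording: the statement as transcribed in the paper contains a few typographical slips that you implicitly corrected. Since $\psi\colon U_2\to U_1$, the composite $f\circ\psi$ is defined on $U_2\subset F_1\oplus\hilbertH_2$, so the pair $(u,v)$ should range over $U_2$ (not $U_1$) with $u\in F_1$ (not $\hilbertH_1$), and $\eta$ takes values in $F_2$ (not $\hilbertH_2$). Your proof has these right; the discrepancies are harmless once one identifies $\hilbertH_1\cong F_1$ via $Df_1(x_0)|_{\hilbertH_1}$, which is presumably the ``mild notational identification'' you allude to.
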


\begin{remark}
	%In Local Representation Theorem in finite dimensions, 
	If $\dim \text{range} \,Df(x)=k$ for all $x$ in some neighbourhood $U^\prime(x_0)$, then,  by Inverse Function Theorem (see e.g. \cite[Theorem 2.5.7]{manifolds_tensor_vol2}), there exists an invertible function $\Psi^\prime:\ U^\prime(x_0)\rightarrow U$ such that $f\circ\Psi^\prime$ depends on $k$ variables. %Then one can apply 
\end{remark}

\section{Constant rank condition (CRC)}\label{section:constant_rank_condition}

In the present section we recall basic facts related to constant rank condition for $C^{1}$ functions. By rank of $A$ (where $A$ is a finite set of elements of vector space) we will understand the cardinality of  maximally linearly independent subset of elements of $A$.

\begin{definition}\label{def:CRC}
	Let $f_i:\ U\rightarrow \mathbb{R}$, $i=1,\dots,\kappa$ be $C^1$ functions in a neighbourhood of $x_0\in U$, $U\subset \hilbertH$ open set. We say that \textit{constant rank condition} (CRC) holds at $x_0$ if there exists a neighbourhood $V(x_0)$ such that
	\begin{equation*}
		\rank \{ D f_i(x_0),\ i=1,\dots,n \}=const=\rank \{ D f_i(x),\ i=1,\dots,\kappa \}
	\end{equation*} 
	for all $x\in V(x_0)$. We also admit $const=0$, which corresponds to the case  $Df_i(x_0)=0$, $i=1,\dots, \kappa$.
\end{definition}
The constant rank condition appears in the literature \cite[page 127]{manifolds_tensor_vol2}, \cite[page 47]{an_introduction_to_differentialbe_manifolds_Boothby} and \cite[page 503]{mathematical_analysis_Zorich} (under the name \textit{same rank})\footnote{Let us note that the same terminology (constant rank condition) has been already used in \cite{MR2679662} (Definition 1) and is stronger than that proposed in Definition \ref{def:CRC}.}. 

In the sequel we will make a frequent use of the following observation.
\begin{remark} 
	\label{crc-remark}
	If  $D f_{i_j}(x_0)$, $j=1,\dots,k$ are linearly independent, then, by continuity of $D f_{i_j}$, $j=1,\dots,k$, there exists a neighbourhood $U_0(x_0)$ such that elements $D f_{i_j}(x)$, $j=1,\dots,k$ are linearly independent for all $x\in U_0(x_0)$. Additionally,  if we assume  that \text{the constant rank condition (CRC)} holds for $f:=[f_{1},\dots,f_{n}]$ at  $x_0\in \hilbertH$  in some neighbourhood $V(x_0)$ and $\rank \{D f_i(x_0),\ i=1,\dots,n \}=k$, then, for any  $x\in V(x_0)\cap U_{0}(x_0)$ and   $l\in \{1,\dots,n\}\setminus \{i_1,\dots,i_k\}$, the vectors 
	\begin{equation*}
	D f_{i_1}(x),\dots,D f_{i_k}(x),D f_l(x)
	\end{equation*}
	are linearly dependent. 
\end{remark}

Let us note that, when $f=[f_1,\dots,f_n]$, $f_i:U\rightarrow\mathbb{R}$, $i=1,\dots,n$, are of class $C^1$, $U\subset \hilbertH$ open, 
%and the constant rank condition (CRC) holds at $x_{0}$ 
and $\rank \{ D f_i(x_0),\ i=1,\dots,n \}=k$, then $\dim F_{1}=\dim Df(x_0)(\hilbertH) =k$, where 
\begin{equation*}
Df(x_0)y=\left[\begin{array}{c}
\langle D f_1(x_0) \, , \,  y \rangle\\
\vdots\\
\langle D f_n(x_0) \, , \,  y \rangle
\end{array}\right],\quad  y\in \hilbertH.
\end{equation*}
Consequently, $F_1$, $\dim F_1=k$,  splits $\mathbb{R}^n$  and $F_2$ is a closed complement of $F_1$, $\dim F_2=n-k$. 

Moreover, for any $e\in B(0,\delta)\subset \hilbertH$,
\begin{equation*}
\|Df(x_0)e\|:=\left\| \begin{array}{c}
\langle D f_1(x_0) \, , \,  e \rangle\\ \vdots\\  \langle D f_n(x_0) \, , \,  e \rangle
\end{array}
\right\|_1= \sum_{i=1}^n |\langle D f_i(x_0) \, , \,  e \rangle |\leq  \|e\|\sum_{i=1}^n \|D f_i(x_0)\|_*< \varepsilon,
\end{equation*}
whenever $\delta<  \frac{\varepsilon}{ \|D f_i(x_0)\|_*}$, i.e., 
$Df(x_0)$ is continuous, and
$\hilbertH_2:=\ker Df(x_0)=\{ h\in \hilbertH \mid Df(x_0)h=0  \}$ is a closed subspace of $\hilbertH$.
By Proposition \ref{proposition:split}, its complement $\hilbertH_1$ is a closed subspace of $\hilbertH$, moreover, $\dim \hilbertH_1=k$.

\section{Rank theorem under CRC}\label{section:rank_theorem_under_CRC}

In this section we reformulate  the classical rank theorem, Theorem \ref{theorem:rank},
under CRC condition.

Let $x_0\in \hilbertH$ and $\hilbertH_2=\ker Df(x_0)$. By $\hilbertH_1$ we denote its closed complement (see Proposition \ref{proposition:split}).% of $\hilbertH_2$

\begin{lemma}\label{lemma:basis2}
	Let $\hilbertH$ be a %reflexive 
	Banach space. 
	Let  $f_i:\ U\rightarrow \mathbb{R},\ i= 1,\dots,\kappa$, $U\subset \hilbertH$ open, be $C^1$ functions in a neighbourhood of $x_0\in U$ and let $f=[f_1,\dots,f_\kappa]$. Assume that functionals $D f_i(x)$, $i= 1,\dots, 
	\kappa$, are linearly independent for $x$ from a neighbourhood $U_0(x_0)$.  Then  for any $x\in U_0(x_0)$, the vectors
	\begin{equation*}
	e^i(x):=Df(x) D f_{i}(x_0)^*,\quad i=1,\dots \kappa,
	\end{equation*}
	form a basis in $\mathbb{R}^\kappa$, where $D f_{i}(x_0)^*\in \hilbertH$, $i=1,\dots \kappa,$	(see Fact \ref{fact:existence_dial_basis}) are such that
	\begin{equation*}
	D f_{i}(x_0)^*(D f_{j}(x_0))=\left\{\begin{array}{ll}
	1 & \text{if } i=j,\\
	0 & \text{if } i\neq j.
	\end{array}		
	\right.
	\end{equation*}
\end{lemma}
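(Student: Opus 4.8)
The plan is to push the statement down to finite-dimensional linear algebra and then close by continuity of the Fréchet derivative. For $x\in U_0(x_0)$ collect the vectors $e^1(x),\dots,e^\kappa(x)$ as the columns of a matrix $M(x)\in\mathbb{R}^{\kappa\times\kappa}$; since $Df=[Df_1,\dots,Df_\kappa]^T$, its $(j,i)$ entry is $\langle Df_j(x)\,,\,Df_i(x_0)^*\rangle=Df_j(x)(Df_i(x_0)^*)$, and $\{e^i(x)\}_{i=1}^\kappa$ is a basis of $\mathbb{R}^\kappa$ exactly when $M(x)$ is invertible. Equivalently --- the way I would phrase it --- set $\hilbertH_1:=\text{span}\,\{Df_1(x_0)^*,\dots,Df_\kappa(x_0)^*\}$; applying $Df_j(x_0)$ to a vanishing linear combination and using $Df_j(x_0)(Df_i(x_0)^*)=\delta_{ij}$ shows the $Df_i(x_0)^*$ are linearly independent, so $\dim\hilbertH_1=\kappa$ and they form a basis of $\hilbertH_1$. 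Since $e^i(x)=Df(x)(Df_i(x_0)^*)$, the family $\{e^i(x)\}_{i=1}^\kappa$ is a basis of $\mathbb{R}^\kappa$ if and only if the restriction $Df(x)|_{\hilbertH_1}\colon\hilbertH_1\to\mathbb{R}^\kappa$ is a linear isomorphism of these two $\kappa$-dimensional spaces.

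First I would settle this at $x=x_0$. By the defining relation $Df_j(x_0)(Df_i(x_0)^*)=\delta_{ij}$ (the same computation with the canonical pairing used in the proof of Proposition \ref{proposition:split}), the matrix $M(x_0)$ is the $\kappa\times\kappa$ identity; equivalently, $Df(x_0)|_{\hilbertH_1}$ sends the basis $\{Df_i(x_0)^*\}$ onto the standard basis of $\mathbb{R}^\kappa$. Hence $\det M(x_0)=1\neq0$ and $Df(x_0)|_{\hilbertH_1}$ is an isomorphism.

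Now the only analytic ingredient. Each $f_j$ is $C^1$ near $x_0$, so $x\mapsto Df_j(x)\in\hilbertH^*$ is continuous; therefore every entry $x\mapsto Df_j(x)(Df_i(x_0)^*)$ of $M(x)$ is continuous, and so is $x\mapsto\det M(x)$ --- equivalently, $x\mapsto Df(x)|_{\hilbertH_1}$ is a continuous curve in the finite-dimensional space of linear maps $\hilbertH_1\to\mathbb{R}^\kappa$, in which the invertible maps form an open set. Combined with $\det M(x_0)\neq0$, this gives $\det M(x)\neq0$ --- i.e. $e^1(x),\dots,e^\kappa(x)$ a basis of $\mathbb{R}^\kappa$ --- for all $x$ in some neighbourhood of $x_0$; after replacing $U_0(x_0)$ by its intersection with that neighbourhood, the conclusion holds for every $x\in U_0(x_0)$.

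I do not anticipate a real obstacle: once the dual vectors $Df_i(x_0)^*$ are fixed, everything happens in $\mathbb{R}^\kappa$ and uses only continuity of $x\mapsto Df(x)$ together with openness of the invertible matrices. The single point that needs care is the last step --- the hypothesis that $Df_1(x),\dots,Df_\kappa(x)$ be linearly independent throughout $U_0(x_0)$ does not by itself force $\det M(x)\neq0$ on all of $U_0(x_0)$ (it can happen that $Df(x)$ collapses $\hilbertH_1$ at a point where the $Df_i(x)$ are still independent), so the statement is genuinely local and $U_0(x_0)$ must be understood as, or shrunk to, a sufficiently small neighbourhood of $x_0$.
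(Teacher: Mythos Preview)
Your argument is correct and takes a genuinely different route from the paper's. You represent $Df(x)|_{\hilbertH_1}$ by the matrix $M(x)$ in the fixed basis $\{Df_i(x_0)^*\}$, observe $M(x_0)=I_\kappa$ from the biorthogonality relations, and close by continuity of $x\mapsto\det M(x)$; this is elementary and transparent. The paper instead argues algebraically for each $x\in U_0(x_0)$: from the assumed independence of the $Df_i(x)$ it forms a second dual system $\{Df_i(x)^*\}$ and an isomorphism $L_x\colon\hilbertH_1\to\text{span}\{Df_i(x)^*\}$, $Df_j(x_0)^*\mapsto Df_j(x)^*$, and then asserts that $Df(x)|_{\hilbertH_1}$ is injective ``by injectivity of composition of two injective mappings.'' The intended payoff of the paper's route is the conclusion on \emph{all} of $U_0(x_0)$, whereas yours yields only a possibly smaller neighbourhood --- but honestly so. Your final caveat is on the mark: linear independence of the $Df_i(x)$ alone does not force $\ker Df(x)\cap\hilbertH_1=\{0\}$ (e.g.\ in $\mathbb{R}^3$ take $f_1(x)=x_1\cos x_2+x_3\sin x_2$, $f_2(x)=x_2$; then $Df_1,Df_2$ are independent everywhere, yet $e^1((0,\pi/2,0))=0$), so the composition $Df(x)|_{\hilbertH_1}=(Df(x)|_{\text{span}\{Df_i(x)^*\}})\circ L_x$ that the paper seems to have in mind does not hold. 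In any case, the only downstream use of the lemma (in the proof of Proposition~\ref{proposition:isomorphism_2}) invokes it in precisely your form: ``there exists a neighbourhood $U_1(x_0)$ such that \dots'', so your version suffices for the paper's purposes.
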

\begin{proof}
	The existence of $Df_i(x_0)^*$, $i=1,\dots, \kappa$ is ensured by Fact \ref{fact:existence_dial_basis}. 
	Since  $D f_{1}(x_0),\dots, D f_{\kappa}(x_0)$ are linearly independent, by Fact \ref{fact:existence_dial_basis},  $D f_{1}(x_0)^*,\dots, D f_{\kappa}(x_0)^*$ are linearly independent.
	
	Let $\hilbertH_1:=\text{span}\, \{ Df_1(x_0)^*,\dots,Df_\kappa(x_0)^* \}$. 
	First, let us show that for all $x\in U_0(x_0)$ the mapping	$Df(x)|_{\hilbertH_1}:\ \hilbertH_1\rightarrow Df(x)(\hilbertH)$  is an injection.
	
	We start this by showing that for $x=x_0$ this mapping is an injection. 
	%Since  $(D f_{1}(x_0))^*,\dots, (D f_{\kappa}(x_0))^*$ are linearly independent, the mapping 
	%$Df(x_0)|_{\hilbertH_1}:\ \hilbertH_1\rightarrow Df(x_0)(\hilbertH)$ is an injection. 	
	Indeed,  suppose that there exists $e_1,e_2\in \hilbertH_1$, $e_1\neq e_2$ such that $Df(x_0)(e_1)=Df(x_0)(e_2)$. Then $e_1-e_2\in \hilbertH_1$ since $\hilbertH_1$ is a linear space and at the same time $e_1-e_2\in \hilbertH_2$. This contradicts the fact that $e_1\neq e_2$.
	
	%Since $\hilbertH_1=\text{span} \{ (D f_{1}(x_0))^*,\dots,(D f_\kappa(x_0))^*\}$  and $D f_{1}(x_0),\dots,D f_{\kappa}(x_0)$ are linearly independent, the vectors $\{D f_{j}(x_0)\}_{j=1}^\kappa$ form a basis of $\hilbertH_1$.
	By  assumption, for any $x\in U_0(x_0)$ there exists a linear isomorphism 
	\begin{equation*}
	L_x:\ \hilbertH_1\rightarrow\text{span} \{ D f_{1}(x)^*,\dots,D f_{\kappa}(x)^*\}
	\end{equation*}
	defined as $L_x( D f_{j}(x_0)^*)=D f_{j}(x)^*$, $j=1,\dots,\kappa$. 
	
	Hence, for any $x\in U_0(x_0)$, $\hilbertH_1=L_x^{-1}(\text{span} \{ D f_{1}(x)^*,\dots,D f_{\kappa}(x)^*\})$ and 
	$Df(x)|_{\hilbertH_1}:\ \hilbertH_1\rightarrow Df(x)(\hilbertH)$ is an injection (by injectivity of composition of two injective mappings).
	
	%	Now we show that 
	%there exists a neighbourhood such that, 
	%	for any $x\in U_0(x_0)$, vectors  $e^i(x)$, $i=1,\dots,\kappa$, form a basis in $\mathbb{R}^\kappa$.
	%	It is true that $e^j(x_0)$, $j=1,\dots,k$ forms a base in $\mathbb{R}^k$, since $e^j(x_0)$ are columns of a full rank Gram matrix $G_{x_0}$.
	
	Take any $x\in U_0(x_0)$ and $\alpha_i(x)$, $i=1,\dots,\kappa$ such that 
	%	On the contrary, suppose  that for any neighbourhood $U_1(x_0)$ there exists $x\in U_1(x_0)$ such that vectors  $e^i(x)$, $i=1,\dots,\kappa$ are linearly dependent. Then there exists a sequence of $x_k\rightarrow x_0$ such that vectors  $e^i(x_k)$, $i=1,\dots,\kappa$ are linearly dependent. Without loss of generality we may assume that $x_k \in U(x_0)$.
	%	
	%	In this case there exists $\alpha_i(x_k)$, $i=1,\dots,\kappa$, not all equal to zero, $k=1,\dots$ such that
	\begin{equation*}
	\alpha_1(x)e^1(x)+\dots+	\alpha_\kappa(x) e^\kappa(x)=0.
	\end{equation*}
	By injectivity of $DF(x)|_{\hilbertH_1}$, $k\in \mathbb{N}$ and by linear independence of $Df_i(x_0)^*$, $i=1,\dots,\kappa$, for all $k$ we are getting the lineary independece of $e^i(x), i=1,2,\dots, \kappa$ for $x\in U_0(x_0).$
	
	\begin{comment}
	\begin{align*}
	&\alpha_1(x)e^1(x)+\dots+	\alpha_\kappa(x) e^\kappa(x)=0
	\\ &\iff \alpha_1(x)Df(x) D f_{1}(x_0)^* + \dots + \alpha_\kappa(x) Df(x) (D f_{\kappa}(x_0))^*=0\\
	?uwaga?&\iff 	Df(x)( (\alpha_1(x) Df_1(x_0) + \dots + \alpha_\kappa (x) Df_\kappa(x_0) )^* )=0\\
	& \iff (\alpha_1(x) Df_1(x_0) + \dots + \alpha_\kappa(x) Df_\kappa(x_0) )^* =0 \\
	& \iff \alpha_1(x) Df_1(x_0) + \dots + \alpha_\kappa(x) Df_\kappa(x_0)  =0 \\
	&\iff \alpha_1(x),\dots,\alpha_\kappa(x)=0.
	\end{align*}
	\end{comment}
	%A contradiction with $\alpha_i(x)$, $i=1,\dots,\kappa$ are not all equal to zero.
	
	Thus,  $e^j(x)$, $j=1,\dots,\kappa$, form a basis in $\mathbb{R}^\kappa$ for any $x\in U_0(x_0)$.%\qed
	
\end{proof}

\begin{proposition}\label{proposition:isomorphism_2}
	Let $\hilbertH$ be a %reflexive
	Banach space.  
	Let $f_i:\ U\rightarrow \mathbb{R},\ i= 1,\dots,\kappa$, $U\subset \hilbertH$ open, be $C^1$ functions in a neighbourhood of $x_0\in U$. Then the following statements are equivalent:
	\begin{enumerate}[(i)]
		\item\label{isomporphism:s1_2} CRC holds at $x_0$ % with some neighbourhood $V(x_0)$,
		\item\label{isomporphism:s2_2} the mapping $Df(x)|_{\hilbertH_1}:\ \hilbertH_1\rightarrow Df(x)(\hilbertH)$ is an isomorphism for $x$ in some neighbourhood of $x_0$.
	\end{enumerate} %Suppose that $Df(x_0)$
	% has closed split image $H_1$ with closed complement $H_2$ and 
	%split kernel $\hilbertH_2$ with closed complement $\hilbertH_1$.
\end{proposition}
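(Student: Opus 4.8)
The equivalence is essentially a matter of translating the rank condition on the derivatives $Df_i$ into a statement about the restriction of $Df(x)$ to the finite-dimensional subspace $\hilbertH_1$. I will prove both implications, using Lemma~\ref{lemma:basis2} as the main engine.

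\emph{From \eqref{isomporphism:s1_2} to \eqref{isomporphism:s2_2}.} Suppose CRC holds at $x_0$ with $\rank\{Df_i(x_0),\ i=1,\dots,\kappa\}=k$. After relabelling, assume $Df_1(x_0),\dots,Df_k(x_0)$ are linearly independent; by Remark~\ref{crc-remark} there is a neighbourhood $U_0(x_0)\cap V(x_0)$ on which $Df_1(x),\dots,Df_k(x)$ remain independent and each $Df_l(x)$, $l>k$, lies in their span. I would first reduce to the case $k=\kappa$: since $\hilbertH_1$ is defined via the dual basis to $Df_1(x_0),\dots,Df_\kappa(x_0)$ — and here one must be slightly careful, because CRC does \emph{not} assume the full family is independent, so the construction of $\hilbertH_1$ via Proposition~\ref{proposition:split} only makes sense if we first pass to a maximal independent subfamily $f_1,\dots,f_k$ and set $\hilbertH_1=\text{span}\{Df_1(x_0)^*,\dots,Df_k(x_0)^*\}$, $\hilbertH_2=\ker Df(x_0)=\bigcap_{i=1}^\kappa\ker Df_i(x_0)=\bigcap_{i=1}^k\ker Df_i(x_0)$. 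Once this reduction is in place, apply Lemma~\ref{lemma:basis2} to the subfamily $f_1,\dots,f_k$: for every $x\in U_0(x_0)$ the vectors $e^i(x)=Df(x)Df_i(x_0)^*$, $i=1,\dots,k$, are linearly independent in $\mathbb{R}^n$, hence $Df(x)|_{\hilbertH_1}$ is injective; since $\dim\hilbertH_1=k=\dim Df(x)(\hilbertH)$ (the latter by CRC on that neighbourhood), injectivity of a linear map between spaces of equal finite dimension forces surjectivity onto $Df(x)(\hilbertH)$, so $Df(x)|_{\hilbertH_1}$ is an isomorphism. Continuity of the inverse is automatic in finite dimensions.

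\emph{From \eqref{isomporphism:s2_2} to \eqref{isomporphism:s1_2}.} Conversely, if $Df(x)|_{\hilbertH_1}\colon\hilbertH_1\to Df(x)(\hilbertH)$ is an isomorphism for all $x$ in a neighbourhood $W(x_0)$, then $\dim Df(x)(\hilbertH)=\dim\hilbertH_1$ is constant on $W(x_0)$; but $\dim Df(x)(\hilbertH)=\rank\{Df_i(x),\ i=1,\dots,\kappa\}$ by the description of $Df(x)$ as the operator with rows $Df_i(x)$, so the rank is constant, which is exactly CRC at $x_0$.

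\emph{Main obstacle.} The genuine subtlety is the bookkeeping around the definition of $\hilbertH_1$ when CRC holds with $k<\kappa$: the statement as phrased tacitly uses a fixed splitting $\hilbertH=\hilbertH_1\oplus\hilbertH_2$, and one has to make sure $\hilbertH_1$ is built from a maximal linearly independent subfamily of $\{Df_i(x_0)\}$ (so that Proposition~\ref{proposition:split} applies and $\dim\hilbertH_1=k$), and that $\ker Df(x_0)$ coincides with the kernel of that subfamily. Everything else — the injectivity of $Df(x)|_{\hilbertH_1}$, the dimension count, the passage between "constant rank of the derivative family" and "constant dimension of the image of $Df(x)$" — is routine finite-dimensional linear algebra once Lemma~\ref{lemma:basis2} is invoked. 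I would also remark that the degenerate case $k=0$ (all $Df_i(x_0)=0$, hence $\hilbertH_1=\{0\}$) is handled trivially by both directions.
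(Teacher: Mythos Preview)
Your proposal is correct and follows essentially the same approach as the paper: both directions rely on Proposition~\ref{proposition:split} to identify $\hilbertH_1$ with the span of the dual vectors $Df_{i_j}(x_0)^*$ attached to a maximal independent subfamily, and both invoke Lemma~\ref{lemma:basis2} (or its proof) to obtain injectivity of $Df(x)|_{\hilbertH_1}$ on a neighbourhood.

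The one noteworthy difference is in how surjectivity is established in \eqref{isomporphism:s1_2}$\Rightarrow$\eqref{isomporphism:s2_2}. You dispose of it by the dimension count $\dim\hilbertH_1=k=\dim Df(x)(\hilbertH)$, so that an injective linear map between finite-dimensional spaces of equal dimension must be surjective. The paper instead constructs the preimage explicitly: given $g\in Df(x)(\hilbertH)$, it expresses the ``independent'' coordinates $g_{i_1},\dots,g_{i_k}$ in the basis $e^j(x)$ from Lemma~\ref{lemma:basis2}, writes down the coefficients $\beta_j(x)$, and then verifies via the relations $Df_l(x)=\sum_j\alpha_j^l(x)Df_{i_j}(x)$ that the remaining coordinates $g_l$ are automatically matched. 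Your argument is cleaner and avoids this bookkeeping; the paper's construction is more explicit but not needed for the bare isomorphism statement. The converse direction \eqref{isomporphism:s2_2}$\Rightarrow$\eqref{isomporphism:s1_2} is identical in both.
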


\begin{proof} \eqref{isomporphism:s1_2} $\implies$ \eqref{isomporphism:s2_2}.
	Let CRC hold at $x_0$  with a neighbourhood $V(x_0)$. Let $\rank \{ D f_i(x_0), i=1,\dots,\kappa \}=k$. By Remark \ref{crc-remark},  there exist indices $i_1,\dots,i_k \subset \{1,\dots,\kappa\}$, such that
	$i_j\neq i_l$ for $j\neq l$,   
	%$D f_{i_1}(x_0),\dots,D f_{i_k}(x_0)$ are linearly independent. 
	%By the continuity of
	%$D f_{i_1}(\cdot),\dots,D f_{i_k}(\cdot)$, 
	%czy tutaj nie wystarczy   	 
	%and CRC 
	and  a neighbourhood $U_0(x_0)$ such that 
	\begin{equation}\label{continuity_observation_2}
	D f_{i_1}(x),\dots,D f_{i_k}(x) %\quad \text{for all $x\in U_0(x_0)$},
	\end{equation}	 
	form a maximally linearly independent subset of  $\{D f_j(x),\ j=1,\dots,\kappa\}$, $x\in U_0(x_0)$. 
	
	Let $f^1(x):=[ f_{i_1}(x),\dots, f_{i_k}(x)]$.  Clearly,  $\ker Df(x_0)=\ker Df^1(x_0)$, where $\ker Df^1(x_0)=\{ h\in \hilbertH \mid Df^1(x_0) h=0   \}$. By Proposition \ref{proposition:split},  
	\begin{equation*}%\label{eq:span_\hilbertH1}
	\hilbertH_1=\text{span} \{ D f_{i_1}(x_0)^*,\dots,D f_{i_k}(x_0)^*\}
	\end{equation*}	
	and $\dim \hilbertH_1=k$. By CRC,  $\dim(D f(x)(\hilbertH))=k$ for all $x\in U_0(x_0)$.
	
	Since  $D f_{i_1}(x_0),\dots,D f_{i_k}(x_0)$ are linearly independent, the mapping 
	$Df^1(x)|_{\hilbertH_1}:\ \hilbertH_1\rightarrow Df^1(x_0)(\hilbertH)$ is an injection for all $x\in U(x_0)$ (see proof of Lemma \ref{lemma:basis2}).

	Now we discuss the surjectivity of  $Df(x)|_{\hilbertH_1}:\ \hilbertH_1\rightarrow Df(x)(\hilbertH)$ in a neighbourhood of $x_0$. To this aim we note that it is enough to investigate the surjectivity of $Df^1(x)|_{\hilbertH_1}:\ \hilbertH_1\rightarrow Df^1(x)(\hilbertH)$. 
	
	Let us note that $Df(x)e$, $e\in \hilbertH_1$ is fully determined by $Df^1(x)e$. To see this take $e\in \hilbertH_1$. Then $e=\sum_{j=1}^k \lambda_j(D f_{i_j}(x_0))^*$, where $\lambda_j\in \mathbb{R}$, $j=1,\dots,k$. 
	For any $x\in U_0(x_0)$ we have
	\begin{equation*}
	Df^1(x)e=[ D f_{i_l}(x)   e ]_{l=1}^k=\left[\sum_{j=1}^k \lambda_j  D f_{i_l}(x)D f_{i_j}(x_0)^*   \right]_{l=1}^k .
	\end{equation*}
	Again, by Remark \ref{crc-remark}, $D f_l(x)^*$, $l\in \{1,\dots,\kappa\}\setminus \{i_1,\dots,i_k\}$   depend linearly on  $D f_{i_1}(x)^*,\dots,D f_{i_k}(x)^*$, $x\in U_0(x_0)\cap V(x_0)$, we have that
	\begin{equation}\label{representation_dependent_2}
	D f_l(x) e=\sum_{j=1}^k \alpha_{j}^l(x) Df_{i_j}(x)  e ,
	\end{equation}
	where $\alpha_j^l(x)\in \mathbb{R}$, $j\in {1,\dots,k}$, $l\in \{1,\dots,\kappa\}\setminus \{i_1,\dots,i_k\}$, $x\in U_0(x_0)\cap V(x_0)$ and
	\begin{equation*}%\label{representation_dependent_2}
	D f_l(x) =\sum_{j=1}^k \alpha_{j}^l(x)  D f_{i_j}(x) .
	\end{equation*} 
	%	Hence
	%	\begin{equation*}
	%	Df(x)e=[D f_l(x)  e]_{l\in I_1\cup I(x_0)}=\left[\sum_{j=1}^k \lambda_j \langle D f_l(x) \mid D f_{i_j}(x_0) \rangle  \right]_{l\in I_1\cup I(x_0)} ,
	%	\end{equation*}
	%	where for $l\in I_1\cup I(x_0)\setminus \{i_1,\dots,i_k\}$
	%	\begin{equation*}
	%	\sum_{j=1}^k \lambda_j \langle D f_l(x) \mid D f_{i_j}(x_0) \rangle =  \sum_{h=1}^k\alpha_h^l(x) \sum_{j=1}^k  \lambda_j  \langle D f_{i_h}(x) \mid D f_{i_j}(x_0) \rangle.
	%	\end{equation*}

	Now we show the surjectivity of $Df(x)|_{\hilbertH_1}:\ \hilbertH_1\rightarrow Df(x)(\hilbertH)$ for $x$ in some neighbourhood of $x_0$. By Lemma \ref{lemma:basis2}, there exists a neighbourhood $U_1(x_0)$ such that the vectors
	\begin{equation*}
	e^j(x):=Df^1(x) D f_{i_j}(x_0)^*,\quad j=1,\dots k,
	\end{equation*}
	form a basis in $\mathbb{R}^k$.	
	Let $x\in U_0(x_0)\cap U_1(x_0)\cap V(x_0)$, $g\in Df(x)(\hilbertH)$ and for $l\in \{1,\dots,\kappa\}$ let us denote by $g_l$ its $l$-th component. By \eqref{representation_dependent_2}, we have $g_l=\sum_{j=1}^{k}\alpha_j^l(x) g_{i_h}$, $l\in \{1,\dots,\kappa\}\setminus \{i_1,\dots,i_k\}$ and, moreover
	\begin{equation*}
	\left[\begin{array}{ccc}
	g_{i_1}\\
	\vdots\\
	g_{i_k}
	\end{array}\right]=\sum_{j=1}^k \beta_j(x) e^j(x),
	\end{equation*}
	for some $\beta_j(x) \in \mathbb{R}$, $j=1,\dots,k$. Hence,
	\begin{align*}
	\left[\begin{array}{ccc}
	g_{i_1}\\
	\vdots\\
	g_{i_k}
	\end{array}\right]&=\sum_{j=1}^k \beta_j(x) Df^1(x) Df_{i_j}(x_0)^*
	%	 \left[\begin{array}{c}
	%	\langle D f_{i_1}(x) \mid D f_{i_j}(x_0) \rangle\\
	%	\vdots\\
	%	\langle D f_{i_k}(x) \mid D f_{i_j}(x_0) \rangle
	%	\end{array}\right]
	%	\\
	%	&= 
	%	\left[\begin{array}{c}
	%	\langle D f_{i_1}(x) \mid \sum_{j=1}^k \beta_j(x)D f_{i_j}(x_0) \rangle\\
	%	\vdots\\
	%	\langle D f_{i_k}(x) \mid \sum_{j=1}^k \beta_j(x)D f_{i_j}(x_0) \rangle
	%	\end{array}\right]
	=Df^1(x)(\sum_{j=1}^k\beta_j(x)D f_{i_j}(x_0)^*).
	\end{align*}
	And, for $l\in \{1,\dots,\kappa\}\setminus \{i_1,\dots,i_k\}$,
	\begin{equation*}
	g_l=  \sum_{h=1}^{k}\alpha_h^l(x) D f_{i_h}(x) ( \sum_{j=1}^k \beta_j(x)D f_{i_j}(x_0)^* ).
	\end{equation*}
	
	Observe that $\sum_{j=1}^k \beta_j(x)D f_{i_j}(x_0)^*\in \hilbertH_1$, and hence  $Df(x)|_{\hilbertH_1}:\ \hilbertH_1\rightarrow Df(x)(\hilbertH)$ is surjective for $x\in U_0(x_0)\cap U_1(x_0)\cap V(x_0)$. Since $Df(x)|_{\hilbertH_1}:\ \hilbertH_1\rightarrow Df(x)(\hilbertH)$ is surjection and injection between finite-dimensional spaces, it is a (linear) isomorphism.%\qed
	
	%\end{proof}
	%
	%\begin{proposition}\label{remark:converse}
	%Let us observe that 
	%	The converse to Proposition \ref{proposition:isomorphism} holds true, i.e.
	%	if $Df(x)|_{\hilbertH_1}:\ \hilbertH_1\rightarrow Df(x)(\hilbertH)$ is an isomorphism for $x$ in some neighbourhood of $x_0$, then CRC holds for functions $f_i:\ \hilbertH\rightarrow \mathbb{R},\ i= 1,\dots,n$ at $x_0$ with some neighbourhood $V(x_0)$.
	%\end{proposition}
	%\begin{proof}
	
	\eqref{isomporphism:s2_2} $\implies$ \eqref{isomporphism:s1_2}
	We have $\hilbertH_2=\ker Df(x_0)=\{ h\in \hilbertH \mid Df(x_0) h=0   \}$ and 
	let $k=\rank \{D f_1(x_0),\dots, D f_\kappa(x_0)  \}$. There exists $i_1,\dots,i_k\subset \{1,\dots,\kappa\}$ such that the system $D f_{i_1}(x_0),\dots, D f_{i_k}(x_0) $
	forms a maximally linearly independent subset of set $ \{D f_1(x_0),\dots, D f_\kappa(x_0)  \}$. Moreover,
	\begin{equation*}%\label{eq:span_E1}
	\hilbertH_1=\hilbertH_2^\perp=\text{span} \{ D f_1(x_0)^*,\dots, D( f_\kappa(x_0))^*  \}=\text{span} \{ D f_{i_1}(x_0)^*,\dots, Df_{i_k}(x_0)^*\}.
	\end{equation*}	
	Since $f_{i_1},\dots, f_{i_k}$ are of class $C^1$ there exists a neighbourhood $U_0(x_0)$ such that $D f_{i_1}(x),\dots,D f_{i_k}(x)$ are linearly independent. 
	
	By assumption $\dim Df(x)(\hilbertH)=k$ for all $x$ in a neighbourhood $V_1(x_0)\subset U_0(x_0)$. Thus $\{D f_{i_1}(x)^*,\dots,D f_{i_k}(x)^*\}$ forms a maximally linearly independent subset of set $\{D f_{1}(x)^*,\dots,Df_{\kappa}(x)^*\}$, $x\in V_1(x_0)$. Hence CRC holds for functions $f_i:\ \hilbertH\rightarrow \mathbb{R},\ i= 1,\dots,\kappa$ at $x_0$ with neighbourhood $V_1(x_0)$.%\qed
	%	 From the fact that $\hilbertH_1$ is isomorphic to $DF(x_0)(\hilbertH)$ we have that $\rho(D f_{i_1}(x_0)),\dots,\rho(D f_{i_k}(x_0))$ are linearly independent, where $\rho:\ \hilbertH_1\rightarrow DF(x_0)(\hilbertH)$ is an isomorphism. From continuity of $\rho$ we have that there exists a neighbourhood $V(x_0)$ s.t. $\rho(D f_{i_1}(x)),\dots,\rho(D f_{i_k}(x))$ are linearly independent. From the fact that $\rho$ is an isomorphism we have that $D f_{i_1}(x),\dots,D f_{i_k}(x)$ are linearly independent, $x\in V(x_0)$. Hence, CRC holds at $x_0$.
\end{proof}

In view of Proposition \ref{proposition:isomorphism_2} % and Remark \ref{remark:converse}
,
Theorem 2.5.15 of \cite{manifolds_tensor_vol2}  (for the finite dimensional case see \cite{analysis_Maurin} and  \cite{mathematical_analysis_Zorich}) takes the following form in the case considered in the present paper.
\begin{theorem}[Rank theorem  under CRC]\label{theorem:rank} Let $\hilbertH$ be a %reflexive
	Banach space. 
	Let $x_0\in U$, where $U$ is an open subset of $\hilbertH$ and $f:\ U\rightarrow \mathbb{R}^\kappa$, $f=[f_1,\dots,f_\kappa]$, $f_i:\ \hilbertH\rightarrow \mathbb{R},\ i= 1,\dots,n$ be $C^1$ functions in a neighbourhood of $x_0$. Assume that
	%	 $f_i(x_0)=0$ for $i\in 1,\dots,\kappa$ and 
	CRC holds at $x_0$ with a neighbourhood $V(x_0)$ and the constant rank  $k$. As previously, let  $\hilbertH_2=\ker Df(x_0)$, and let $\hilbertH_1$ be its closed complement. Then there exist open sets $U_1\subset \mathbb{R}^k\oplus \hilbertH_2$, $U_2\subset \hilbertH$, $V_1\subset \mathbb{R}^\kappa$, $V_2\subset \mathbb{R}^k\oplus \hilbertH_2$ and diffeomorphisms of class $C^1$, $\varphi:\ V_1\rightarrow V_2$ and $\psi:\ U_1\rightarrow U_2$,  $x_0=(x_{01},x_{02})\in U_2\subset U\subset \hilbertH_1\oplus \hilbertH_2$, i.e. $x_{01}\in \hilbertH_1$, $x_{02}\in \hilbertH_2$, $f(x_0)\in V_1$ satisfying 
	\begin{equation*}
	(\varphi \circ f \circ \psi )(w,e)=(w,0),\quad \text{where}\ w\in \hilbertH_1,\ e\in \hilbertH_2
	\end{equation*} for all $(w,e)\in U_1$.
\end{theorem}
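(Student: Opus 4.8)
The plan is to read off Theorem \ref{theorem:rank} from the abstract Banach-space rank theorem, Theorem 2.5.15 of \cite{manifolds_tensor_vol2}: the role of Proposition \ref{proposition:isomorphism_2} is precisely that, in the present situation of a finite-dimensional codomain, CRC at $x_0$ is \emph{equivalent} to the hypothesis under which that theorem applies, so almost all of the work is already behind us and what remains is to fix the splittings and translate notation.

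First I would record the two splittings and the dimensions. By the discussion following Remark \ref{crc-remark}, $\hilbertH_2:=\ker Df(x_0)$ is a closed subspace of $\hilbertH$; picking a maximal linearly independent subfamily $Df_{i_1}(x_0),\dots,Df_{i_k}(x_0)$ of $\{Df_1(x_0),\dots,Df_\kappa(x_0)\}$ and applying Proposition \ref{proposition:split} gives the closed complement $\hilbertH_1=\text{span}\{Df_{i_1}(x_0)^*,\dots,Df_{i_k}(x_0)^*\}$ with $\dim\hilbertH_1=k$, so that $\hilbertH=\hilbertH_1\oplus\hilbertH_2$. On the codomain side $F_1:=Df(x_0)(\hilbertH)$ is $k$-dimensional, hence closed, and splits $\mathbb{R}^\kappa$ with a closed complement $F_2$, $\dim F_2=\kappa-k$; moreover $Df(x_0)$ restricts to a linear isomorphism $\hilbertH_1\to F_1$, which fixes the identifications $\hilbertH_1\cong\mathbb{R}^k\cong F_1$ and $\mathbb{R}^\kappa=F_1\oplus F_2\cong\mathbb{R}^k\oplus\mathbb{R}^{\kappa-k}$.

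Next, by Proposition \ref{proposition:isomorphism_2}, CRC at $x_0$ guarantees that $Df(x)|_{\hilbertH_1}\colon\hilbertH_1\to Df(x)(\hilbertH)$ is a linear isomorphism for every $x$ in a neighbourhood $W(x_0)\subseteq V(x_0)$; in particular $\dim Df(x)(\hilbertH)=k$ there. (It is also worth noting the consequence that the \emph{fixed} subspace $\hilbertH_1$ complements $\ker Df(x)$ for every such $x$: given $y\in\hilbertH$ we have $Df(x)y\in Df(x)(\hilbertH)=Df(x)(\hilbertH_1)$, so $y-w\in\ker Df(x)$ for a suitable $w\in\hilbertH_1$, while $\hilbertH_1\cap\ker Df(x)=\{0\}$ by injectivity of $Df(x)|_{\hilbertH_1}$; hence $\hilbertH=\hilbertH_1\oplus\ker Df(x)$.) This is precisely the standing hypothesis of Theorem 2.5.15 of \cite{manifolds_tensor_vol2} with $E_1=\hilbertH_1$, $E_2=\hilbertH_2=\ker Df(x_0)$, $F_1=Df(x_0)(\hilbertH)$ and $F_2$ the complement chosen above. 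Applying that theorem yields open neighbourhoods together with $C^1$ diffeomorphisms $\psi$ — on the domain side, modelled on $\hilbertH_1\oplus\hilbertH_2$ and with $x_0=(x_{01},x_{02})$ in its range — and $\varphi$ — on the codomain side, modelled on $F_1\oplus F_2$ and with $f(x_0)$ in its domain — such that $(\varphi\circ f\circ\psi)(w,e)=(w,0)$ for $w\in\hilbertH_1$, $e\in\hilbertH_2$. Renaming the four neighbourhoods $U_1,U_2,V_1,V_2$ and performing the identifications of the first paragraph gives the assertion.

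If one wants to avoid quoting the abstract rank theorem, the same normal form can be produced directly, and this is the step I would expect to carry the (mild) technical weight: the Local Representation Theorem (Theorem \ref{theorem:local_representation}), applied with the splittings above, puts $f\circ\psi$ into the shape $(u,v)\mapsto(u,\eta(u,v))$ with $u\in F_1$, $v\in\hilbertH_2$ and $D\eta$ vanishing at the base point; computing $\rank D(f\circ\psi)(u,v)=k+\rank D_v\eta(u,v)$ and using that this rank equals $\rank Df$ along the image of $\psi$, hence $k$ by CRC, forces $D_v\eta\equiv0$ on a connected box, whence $\eta=\eta(u)$ is independent of $v$; then $\varphi(a,b):=(a,\,b-\eta(a))$ is a $C^1$ diffeomorphism of a neighbourhood of $f(x_0)$ with $(\varphi\circ f\circ\psi)(u,v)=(u,0)$. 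In either route the only real difficulty is bookkeeping — keeping the domain complement $\hilbertH_1$, the moving kernels $\ker Df(x)$ and the codomain complement $F_2$ apart — since the analytic substance is already contained in Proposition \ref{proposition:isomorphism_2} and in the theorem being specialized.
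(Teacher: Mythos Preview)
Your proposal is correct and matches the paper's approach exactly: the paper presents Theorem~\ref{theorem:rank} not with a standalone proof but as the specialization of Theorem~2.5.15 of \cite{manifolds_tensor_vol2} that becomes available once Proposition~\ref{proposition:isomorphism_2} has identified CRC with the abstract rank-theorem hypothesis. Your additional direct route via the Local Representation Theorem and the computation $D_v\eta\equiv 0$ is a nice bonus that the paper does not spell out but is consistent with how $\eta$ is later used in the proof of Proposition~\ref{proposition:functional_dependence}.
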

%\begin{remark}
%	Let  $f=[f^1,f^2]$, where $f^1:\ U\rightarrow \mathbb{R}^k$, $f^1:\ U\rightarrow \mathbb{R}^{n-k}$ . Using this notation, the Hilbert space isomorphism $D\psi (f^1(x_0),x_{02}):\ \hilbertH_1\oplus \hilbertH_2\rightarrow \hilbertH=\hilbertH_1\oplus \hilbertH_2$ is a block diagonal operator and $D\psi(x,y)|_{\hilbertH_1\times \{0\}}:\ \hilbertH_1\times \{0\}\rightarrow \hilbertH=\hilbertH_1\oplus \hilbertH_2$ is a block diagonal operator and $D\psi (x,y)|_{ \hilbertH_1\times \{0\}}:\ \hilbertH_1\times \{0\}\rightarrow \hilbertH_1\times \{0\}$ is an isomorphism for all $(x,y)\in U_1$. The isomorphism $D\varphi(v):\ \hilbertH_1\oplus \hilbertH_2 \rightarrow \hilbertH_1\oplus \hilbertH_2$ is the identity on $\hilbertH_2$ for any $v\in V_1$ and $D\psi (f(x_0))$ is a block diagonal operator.
%\end{remark}

\section{Functional dependence}\label{section:functional_dependence}
In this section, by exploiting Theorem \ref{theorem:rank}, we prove Proposition \ref{proposition:functional_dependence} which is an important  tool in the proof of Theorem \ref{theorem:tangent_cone} and can be viewed as a variant of the classical Implicit Function Theorem.

To this aim we extend to Banach spaces the definition of functional dependence of functions $f_i:\ U\rightarrow \mathbb{R}$, $i=1,\dots,\kappa$ , $U\subset \hilbertH$ open, at some $x_0\in U$ given in  \cite[Example 2.5.16]{manifolds_tensor_vol2}.

%Following \cite{manifolds_tensor_vol2}   we introduce the following extension to Hilbert spaces of functional dependence of functions $f_i:\ \hilbertH\rightarrow \mathbb{R}$, $i=1,\dots,n$ at some $x_0\in \hilbertH$.
%\begin{definition}
%	Let $U\subset \hilbertH$ be an open set and let the functions $f_i:\ \hilbertH\rightarrow\mathbb{R}$, $i=1,\dots,n$ be class $C^1$. The functions $f_1\dots,f_n$ are said to be \textit{functionally dependent} at $x_0\in U$ if there is a neighbourhood $V$ of the point $(f_1(x_0),\dots,f_n(x_0))\in \mathbb{R}^n$ and a smooth function $F:\ V\rightarrow \mathbb{R}$ of class $C^1$ such that $F\not\equiv 0$ on any neighbourhood of $(f_1(x_0),\dots,f_n(x_0))$ and
%	\begin{equation*}
%	F(f_1(x),\dots,f_n(x))=0
%	\end{equation*}
%	for all $x$ in some neighbourhood of $x_0$.
%\end{definition}
\begin{definition}\label{def:manifolds} %(Functional dependence 2)
	Let $U\subset \hilbertH$ be an open set and let  functions $f_i:\ U\rightarrow\mathbb{R}$, $i=1,\dots,\kappa$ be of class $C^1$  in a neighbourhood of $x_0\in U$. Functions $f_1\dots,f_\kappa$ are  \textit{functionally dependent} at $x_0$ if there exist neighbourhoods $U(x_0)$,  $V(y_0)$, where $y_0:=(f_1(x_0),\dots,f_\kappa(x_0))\in \mathbb{R}^\kappa$ and a function $F:\ V(y_0)\rightarrow \mathbb{R}$ of class $C^1$ such that
	\begin{enumerate}
		\item $F(f_1(x),\dots,f_\kappa(x))=0$	for all $x\in U(x_0)$,
		\item $DF(y_0)\neq 0$.
	\end{enumerate}	
	Functions $f_i:\ U\rightarrow\mathbb{R}$, $i=1,\dots,\kappa$,  are \textit{functionally independent} at $x_0$ if $f_i:\ U\rightarrow\mathbb{R}$, $i=1,\dots,\kappa$ are not functionally dependent at $x_0$, i.e. for any neighbourhoods $V(y_0)$,  $U(x_0)$,  and for any $F:\ V(y_0)\rightarrow \mathbb{R}$ of class $C^1$, if $F(f_1(x),\dots,f_\kappa(x))=0$ for all $x\in U(x_0)$, then $DF(y_0)=0$.
\end{definition}

%\subsection{Sufficient conditions}
Now we discuss  conditions ensuring functional dependence/independence.

In the proposition below we generalize Proposition 1 of Section 8.6.3 of \cite{mathematical_analysis_Zorich} to the case, where the argument space is a   %reflexive
Banach space and the functional dependence is understood in the sense of Definition \ref{def:manifolds}. Assertion (2) of the proposition below establishes a connection of CRC at $x_0$ and  Implicit Function Theorem.
\begin{proposition}\label{proposition:functional_dependence}
	Let $\hilbertH$ be a %reflexive
	Banach space. 
	Let $x_0\in U$, $U\subset \hilbertH$, $U$ - open, $f=[f_1,\dots,f_\kappa]$, $f_i:\ U\rightarrow \mathbb{R},\ i= 1,\dots,\kappa$ be $C^1$ functions in a neighbourhood of $x_0$. Assume that 
	%$f_i(x_0)=0$ for $i\in 1,\dots,\kappa$ and 
	CRC holds at $x_0$ with a neighbourhood $V(x_0)$, i.e. 
	\begin{equation*}
	\rank \{ D f_i(x),\ i=1,\dots,\kappa  \}=\rank \{ D f_i(x_0),\ i=1,\dots,\kappa  \}=k \quad \forall x\in V(x_0).
	\end{equation*}
	Let $\hilbertH_2=\ker Df(x_0)$ and $\hilbertH_1$ be its closed complement. Let $i_1,\dots,i_k \subset \{1,\dots,\kappa\}$ be such that
	$i_j\neq i_l$ for $j\neq l$  and $D f_{i_1}(x_0),\dots,D f_{i_k}(x_0)$ are linearly independent.
	\begin{enumerate}[(1)]
		\item\label{proposition:functional_dependence:statement1} If $k=\kappa$, then functions $f_1,\dots,f_\kappa$ are functionally independent at $x_0$.
		\item  If $k<\kappa$, then for any $l\in \{1,\dots,\kappa\}\setminus \{i_1,\dots,i_k\}$ functions  $f_{i_1}\dots,f_{i_k},f_{l}$ are functionally dependent at $x_0$ and there exists a  function $g_l:\ \mathbb{R}^k\rightarrow \mathbb{R}$ of class $C^1$ such that for any $x$ in some neighbourhood of $x_0$
		\begin{equation*}
		f_l(x)=g_l(f_{i_1}(x),\dots, f_{i_k}(x)). 
		\end{equation*}		 
	\end{enumerate}
	
\end{proposition}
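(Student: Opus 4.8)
The plan is to treat the two statements separately. Statement (1) I would derive directly from the chain rule: if $k=\kappa$, then $Df_1(x_0),\dots,Df_\kappa(x_0)$ are linearly independent, so assuming for contradiction that $f_1,\dots,f_\kappa$ are functionally dependent at $x_0$ — say $F(f_1(x),\dots,f_\kappa(x))=0$ on a neighbourhood of $x_0$ with $DF(y_0)\neq0$ — one differentiates this identity at $x_0$ to get $\sum_{i=1}^{\kappa}\partial_iF(y_0)\,Df_i(x_0)=0$ in $\hilbertH^*$, whence $\partial_iF(y_0)=0$ for all $i$, i.e. $DF(y_0)=0$, a contradiction.

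For statement (2), fix $l\in\{1,\dots,\kappa\}\setminus\{i_1,\dots,i_k\}$ and put $\tilde f:=[f_{i_1},\dots,f_{i_k},f_l]\colon U\to\mathbb{R}^{k+1}$. First I would check that $\tilde f$ satisfies CRC at $x_0$ with constant rank $k$ and with $\ker D\tilde f(x_0)=\hilbertH_2$: since $\{Df_{i_1}(x_0),\dots,Df_{i_k}(x_0)\}$ is a maximally linearly independent subset of $\{Df_1(x_0),\dots,Df_\kappa(x_0)\}$, Remark~\ref{crc-remark} yields a neighbourhood on which $Df_{i_1}(x),\dots,Df_{i_k}(x)$ stay independent while $Df_l(x)$ is their linear combination, so $\rank\{Df_{i_1}(x),\dots,Df_{i_k}(x),Df_l(x)\}=k$ there, and moreover $\ker D\tilde f(x_0)=\bigcap_{j=1}^{k}\ker Df_{i_j}(x_0)=\ker Df(x_0)$. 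Theorem~\ref{theorem:rank} then applies to $\tilde f$ with the splitting $\hilbertH=\hilbertH_1\oplus\hilbertH_2$, producing $C^1$ diffeomorphisms $\varphi,\psi$ and open sets as in that theorem with $(\varphi\circ\tilde f\circ\psi)(w,e)=(w,0)$, $w\in\mathbb{R}^k\cong\hilbertH_1$, $e\in\hilbertH_2$. Since $\varphi$ is injective, $\tilde f(\psi(w,e))=\varphi^{-1}(w,0)=:H(w)$ depends on $w$ alone; writing $\psi^{-1}(x)=(w(x),e(x))$, with $w(\cdot)$ of class $C^1$, we obtain $f_{i_j}(x)=H_j(w(x))$ for $j=1,\dots,k$ and $f_l(x)=H_{k+1}(w(x))$ for $x$ near $x_0$, where $H=(H_1,\dots,H_{k+1})$.

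It then remains to eliminate the auxiliary variable $w$. Set $P:=(H_1,\dots,H_k)$, so $(f_{i_1}(x),\dots,f_{i_k}(x))=P(w(x))$ near $x_0$. I would argue that $P$ is a local $C^1$ diffeomorphism at $w_0:=w(x_0)$: the derivative of $x\mapsto(f_{i_1}(x),\dots,f_{i_k}(x))$ at $x_0$ maps onto $\mathbb{R}^k$ because $Df_{i_1}(x_0),\dots,Df_{i_k}(x_0)$ are independent, and it equals $DP(w_0)$ composed with $Dw(x_0)$, the latter being onto $\mathbb{R}^k$ since $\psi^{-1}$ is a diffeomorphism; hence $DP(w_0)$ maps $\mathbb{R}^k$ onto $\mathbb{R}^k$ and is invertible, and the Inverse Function Theorem provides a $C^1$ local inverse $P^{-1}$ near $y_0':=(f_{i_1}(x_0),\dots,f_{i_k}(x_0))$. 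Consequently $w(x)=P^{-1}(f_{i_1}(x),\dots,f_{i_k}(x))$ near $x_0$, so $f_l(x)=H_{k+1}(w(x))=g_l(f_{i_1}(x),\dots,f_{i_k}(x))$ with $g_l:=H_{k+1}\circ P^{-1}$ of class $C^1$ (extended arbitrarily to all of $\mathbb{R}^k$ if a global domain is desired). Finally, functional dependence of $f_{i_1},\dots,f_{i_k},f_l$ at $x_0$ is witnessed by $F(y_1,\dots,y_{k+1}):=y_{k+1}-g_l(y_1,\dots,y_k)$ near $y_0:=\tilde f(x_0)$, since then $F\circ\tilde f\equiv0$ near $x_0$ and $DF(y_0)=(-Dg_l(y_0'),1)\neq0$.

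The one genuinely delicate point is the claim that $P$ is a local diffeomorphism, i.e. that replacing $\tilde f$ by its normal form $H$ does not collapse the dependence on $f_{i_1},\dots,f_{i_k}$; this is where both the submersivity of $[f_{i_1},\dots,f_{i_k}]$ at $x_0$ and the diffeomorphism property of $\psi$ are really used. One must also keep careful track of the identification $\hilbertH_1\cong\mathbb{R}^k$ and of the codomain splitting $\mathbb{R}^{k+1}=\mathbb{R}^k\oplus\mathbb{R}$ built into Theorem~\ref{theorem:rank}.
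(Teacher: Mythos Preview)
Your argument is correct in both parts, but it diverges from the paper's proof in two notable ways.

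For (1), the paper does \emph{not} argue by the chain rule. Instead it applies the Rank Theorem (Theorem~\ref{theorem:rank}) to $f$ with $k=\kappa$ and observes that $f=\varphi^{-1}\circ(\varphi\circ f\circ\psi)\circ\psi^{-1}$ maps a neighbourhood of $x_0$ onto a neighbourhood of $y_0=f(x_0)$ in $\mathbb{R}^\kappa$; hence any $F$ vanishing on $f(U(x_0))$ vanishes on an open set and therefore satisfies $DF(y_0)=0$. Your chain-rule argument is shorter and perfectly valid, but the paper's version yields the stronger intermediate fact that $f$ is locally open at $x_0$, which the authors reuse later (e.g.\ in the proof of Proposition~\ref{proposition:Raghavan_to_Laszlo}).

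For (2), the paper applies the Rank Theorem to the full map $f=[f_1,\dots,f_\kappa]$ (not to your truncation $\tilde f$) and, crucially, couples it with the Local Representation Theorem (Theorem~\ref{theorem:local_representation}), noting that the diffeomorphism $\psi$ in both theorems is the same. This gives directly
\[
(f\circ\psi)(w,e)=\varphi^{-1}(w,0)=(w,\eta(w,e)),
\]
so that $f_{i_j}(\psi(w,e))=w_j$ for $j=1,\dots,k$ without any further work, and $f_l\circ\psi$ depends on $w$ alone. Your route---apply the Rank Theorem to $\tilde f$, then invert $P=(H_1,\dots,H_k)$ via the Inverse Function Theorem---reaches the same conclusion but trades the appeal to Theorem~\ref{theorem:local_representation} for the extra inversion step you yourself flag as ``the one genuinely delicate point''. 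Both approaches are sound; the paper's is slightly more economical because the special form of $\psi$ already encodes the identification $w_j=f_{i_j}(x)$.
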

\begin{proof}\mbox{\ }	
	\begin{enumerate}[(1)]
		\item The proof follows the line of the proof of Propositon 1 of Section 8.6.3 of \cite{mathematical_analysis_Zorich}.
		
		Let $f=[f_1,\dots,f_\kappa]$. By  Theorem \ref{theorem:rank}, there exist diffeomorphisms of class $C^1$, $\varphi:\ V_1\rightarrow V_2$ and $\psi:\ U_1\rightarrow U_2$ such that
		\begin{equation*}
		(\varphi \circ f \circ \psi )(w,e)=(w,0) \quad\text{ for all $(w,e)\in U_1\subset \hilbertH_1\oplus  \hilbertH_2$}.
		\end{equation*}
		Since $\varphi, \psi$ are diffeomorphisms we have
		\begin{equation*}
		f=\varphi^{-1}\circ (\varphi \circ f \circ \psi )\circ \psi^{-1},
		\end{equation*}
		and hence, $y_0:=f(x_0)$ is an interior  point (in space $\mathbb{R}^\kappa$) of the image of a neighbourhood of $x_0\in \hilbertH$ (note that $\hilbertH_2=\{0\}$). Thus, for any function $F$, the relation 
		\begin{equation*}
		F(f_1(x),\dots,f_\kappa(x))\equiv 0
		\end{equation*}
		holds in a neighbourhood of $x_0$ only if
		\begin{equation*}
		F(y_1,\dots,y_\kappa)\equiv 0
		\end{equation*}		
		in an neighbourhood of $y_0$. Hence, $DF(y_0)= 0$. %in an neighbourhood of $y_0$.
		\item 	 The proof follows the lines of the proof of \cite[Theorem 2.5.12]{manifolds_tensor_vol2}.
		
		If  $\{1,\dots,\kappa\}\setminus \{i_1,\dots,i_k\}= \emptyset$,  the assertion is automatically satisfied. Suppose that $\{1,\dots,\kappa\}\setminus \{i_1,\dots,i_k\}\neq \emptyset$. Without loss of generality we assume that $i_j=j$, $j=1,\dots,k$. 
		
		By Theorem \ref{theorem:rank}, there exist open sets $U_1\subset \mathbb{R}^k\oplus \hilbertH_2$, $U_2\subset \hilbertH$, $V_1\subset \mathbb{R}^\kappa$, $V_2\subset \mathbb{R}^k \oplus \hilbertH_2$ and diffeomorphisms of class $C^1$, $\varphi:\ V_1\rightarrow V_2$ and $\psi:\ U_1\rightarrow U_2$,  $x_0=(x_{01},x_{02})\in U_2\subset U\subset \hilbertH_1\oplus \hilbertH_2$, i.e. $x_{01}\in \hilbertH_1$, $x_{02}\in \hilbertH_2$, $f(x_0)\in V_1$ satisfying 
		\begin{equation}\label{representation:rank_theorem}
		(\varphi \circ f \circ \psi )(w,e)=(w,0),\quad \text{where}\ w\in \mathbb{R}^k,\ e\in \hilbertH_2
		\end{equation} for all $(w,e)\in U_1$. Note that the diffeomorphism $\psi$ is the same as in Theorem \ref{theorem:local_representation}.		
		Hence,
		\begin{equation}\label{eq:does_not_depend}
		\bar{f}(w,e):=(f\circ \psi) (w,e) =\varphi^{-1} (w,0)=(w,\eta(w,e)),
		\end{equation}
		where $\eta:\ \mathbb{R}^k\times \hilbertH_2\rightarrow \hilbertH_2$ is the same as in Theorem \ref{theorem:local_representation}. Thus, $\bar{f}$ does not depend on $e\in \hilbertH_2$.
		
		Let $x\in U_2$ and  denote
		\begin{equation}
		y_i=f_i(x),\quad i=1,\dots,\kappa.
		\end{equation}  There exists $u=(w,e)\in U_1\in \mathbb{R}^k\oplus \hilbertH_2$  such that
		$
		x=\psi(u)
		$.
		Hence, 
		\begin{equation*}
		\begin{array}{ll}
		y_{j}=f_{j}( \psi (w,e))=w_{j},& j=1,\dots,k.%\\
		%	y_l=f_{l}\circ \psi (u_{01},u_{02})=\eta_{i_j}(u_{01},u_{02})=\tilde{\eta}_{i_j}(u_{01}),& l\in \{1,\dots,\kappa\}\setminus \{1,\dots,k\}.
		\end{array}
		\end{equation*}
		For $l\in \{k+1,\dots,\kappa\}$ we have
		\begin{equation*}
		y_l=f_l(x)=f_l(\psi(w,e))=\bar{f}_l(w,e)=\bar{f}_l(y_1,\dots,y_k,e).
		\end{equation*}
		In consequence, by \eqref{eq:does_not_depend}, $y_l=\bar{f}_l(y_1,\dots,y_k)$, $l\in \{k+1,\dots,\kappa\}$. Hence, for any $x\in U_2$, $f_l(x)=\bar{f}_l(f_1(x),\dots,f_k(x))$, $l\in \{k+1,\dots,\kappa\}$.
	\end{enumerate}%\qed
\end{proof}

The following example illustrates functional independence of functions at $x_0$ under CRC.
%\begin{example}
%	Let $f_1(x_1,x_2)=x_1$, $f_2(x_1,x_2)=x_2$. We will show that $f_1,f_2$ are functionally independent at $x_0=(0,0)$. Suppose, by contrary, that $f_1,f_2$ are functionally dependent, i.e there exists a function $F:\ \mathbb{R}^2\rightarrow \mathbb{R}$ of class $C^1$ such that $F(f_1(x_1,x_2),f_2(x_1,x_2))=0$ for all $(x_1,x_2)$ in some neighbourhood of $(0,0)$ and $DF\neq 0$  in some neighbourhood of $(f_1(0,0),f_2(0,0))=(0,0)$. Indeed, by Implicit Function Theorem (see e.g. \cite[Theorem 2.5.7]{manifolds_tensor_vol2}),
%	%Proposition \ref{proposition:functional_dependence}
%	there exists $g:\ \mathbb{R}\rightarrow \mathbb{R}$ such that $f_2(x_1,x_2)=g(f_1(x_1,x_2))$ for all $(x_1,x_2)$ in some neighbourhood of $x_0$ and
%	\begin{equation*}
%		D f_2(x_1,x_2)=\left[\begin{array}{l}
%			\frac{\partial g}{\partial f_1}(f_1(x_1,x_2)) \cdot\frac{\partial f_1}{\partial x_1}\\
%			\frac{\partial g}{\partial f_1}(f_1(x_1,x_2)) \cdot \frac{\partial f_1}{\partial x_2}
%		\end{array}\right]^T=\frac{\partial g}{\partial f_1}(f_1(x_1,x_2)) D f_1(x_1,x_2),
%	\end{equation*}
%	i.e. $D f_1(x_1,x_2), D f_2(x_1,x_2)$ are linearly dependent for all $(x_1,x_2)$ in some neighbourhood of $(0,0)$, which is not true.
%\end{example}

\begin{example}
	Let $\ell_2$ be the Hilbert space of square summable series. Let  $f_1,f_2:\ \ell_2 \rightarrow \mathbb{R}$ be given as	
	$f_1(x)=x_1$, $f_2(x)=x_2$, where $x=(x_1,x_2,\dots)\in \ell_2$. We will show that $f_1,f_2$ are functionally independent at $x_0=0\in \ell_2$. Suppose, by contrary, that $f_1,f_2$ are functionally dependent, i.e there exists a function $F:\ \mathbb{R}^2\rightarrow \mathbb{R}$ of class $C^1$ such that $F(f_1(x),f_2(x))=0$ for all $x$ in some neighbourhood of $0\in \ell_2$ and $DF\neq 0$  in some neighbourhood of $(f_1(0),f_2(0))=(0,0)\in \mathbb{R}^2$. Indeed, by Implicit Function Theorem (see e.g. \cite[Theorem 2.5.7]{manifolds_tensor_vol2}),
	%Proposition \ref{proposition:functional_dependence}
	there exists $g:\ \mathbb{R}\rightarrow \mathbb{R}$ such that $f_2(x)=g(f_1(x))$ for all $x$ in some neighbourhood of $x_0$ and
	\begin{equation*}
	D f_2(x)=\left[\begin{array}{c}
	\frac{d g}{d f_1}(f_1(x)) \cdot\frac{\partial f_1}{\partial x_1}\\
	\frac{d g}{d f_1}(f_1(x)) \cdot \frac{\partial f_1}{d x_2}\\
	0\\
	\vdots
	\end{array}\right]^T=\frac{d g}{d f_1}(f_1(x)) D f_1(x),
	\end{equation*}
	i.e. $D f_1(x), D f_2(x)$ are linearly dependent for all $x$ in some neighbourhood of $0\in \ell_2$, which is not true.
\end{example}

\section{Tangent and linearized cones}\label{section:tangent_and_linearized_cones}
In the present section we prove our main result, namely the Abadie condition in  Banach space for the set ${\mathcal F}$ given by \eqref{set:F} under RCRCQ. The finite-dimensional case has been proved by Minchenko and Stakhovski in \cite{MR2801389}.

Let $\hilbertH$ be a %reflexive
Banach space. 
Let $C$ be a subset of $\hilbertH$ and $x_0\in \text{cl}\, C$. We use the classical definition of tangent cone of $C$ at $x_0$, %(see e.g. Section 0.2.4. of \cite{theory_of_external_problems_Ioffe}) 
%as follows
\begin{align*}
T_C(x_0):=\{ &d\in \hilbertH \mid \exists \varepsilon>0\ \\
&\exists\ \text{a vector function}\ o(t)\ \text{such that}\ \|o(t)\|t^{-1}\rightarrow 0,\ \text{as}\ t\downarrow 0\\
&\text{and}\ x_0+td+o(t)\in C\ \forall\ 0 \leq t\leq  \varepsilon \}.
\end{align*}

For the set ${\mathcal F}$ given by \eqref{set:F} and $x_0\in {\mathcal F}$, the linearized cone is given as
\begin{equation*}
\Gamma_{\mathcal F}(x_0):=\{ d\in \hilbertH \mid \langle D h_i(x_0)\mid d \rangle \leq 0,\ i\in I(x_0),\ \langle D h_i(x_0)\mid d \rangle = 0,\ i\in I_0   \},
\end{equation*}
where $I(x_0):=\{i\in I \mid h_i(x_0)=0 \}$ is the active index set of ${\mathcal F}$ at $x_0$.

\begin{definition}[Relaxed Constant Rank Constraint Qualification]\label{def:rcrcq}
	The \textit{relaxed constant rank constraint qualification} (RCRCQ)  holds for  set ${\mathcal F}$, given by \eqref{set:F} at  $\bar{x}\in {\mathcal F}$, if there exists a neighbourhood $U(\bar{x})$ of $\bar{x}$ such that, for any index set $J$, $I_0\subset J\subset I_0\cup I(\bar{x})$, for every $x\in U(\bar{x})$, the system of vectors $\{ D h_i(x), i\in J  \}$ has constant rank. Precisely, for any $J$, $I_0\subset J\subset I_0\cup I(\bar{x})$,
	\begin{equation*}
	\rank(D h_i(x),i\in J)=\rank(D h_i(\bar{x}),i\in J)\quad \text{for all }x\in U(\bar{x}).
	\end{equation*} 
\end{definition}

\begin{remark} 
	Note that  RCRCQ holds for ${\mathcal F}$ at $x_{0}\in {\mathcal F}$ if and only if  for any index set $J$, $I_0\subset J\subset I_0\cup I(x_0)$, CRC holds at $x_0$ for functions $h_i$, $i\in J$. 
\end{remark}

In Theorem \ref{theorem:tangent_cone} we will use Ljusternik theorem (see \cite[section 0.2.4]{theory_of_external_problems_Ioffe}).
\begin{theorem}(Ljusternik Theorem)\label{theorem:Ljusternik}
	Let $X$ and $Y$ be Banach spaces, let $U$ be a
	neighborhood of a point $x_0\in X$, and let $F:\ U \rightarrow Y$ be a Fr\'echet differentiable mapping. Assume that $F$ is regular at $x_0$, i.e., that
	$\text{Im}\, DF(x_0)= Y$,
	and that its derivative is continuous at this point (in the uniform operator
	topology of the space ${\mathcal L}(X, Y)$). Then the tangent space  $T_M(x_0)$ to the set
	\begin{equation*}
	M =
	\{x \in U \mid F(x) =
	F(x_0)\}
	\end{equation*}
	at the point $x_0$ coincides with the kernel of the operator $DF(x_0)$,
	\begin{equation*}
	T_M(x_0) =
	\text{Ker}\, DF(x_0).
	\end{equation*}
	Moreover, if the assumptions of the theorem are satisfied, then there exist a
	neighborhood $U'\subset U$ of the point $x_0$, a number $K>0$, and a mapping
	$\xi \rightarrow x (\xi)$ of the set $U'$ into $X$ such that
	\begin{align*}
	& F(\xi + x(\xi))= F(x_0),\\
	& \|x(\xi)\|\leq K \|F(\xi) - F(x_0)\|
	\end{align*}
	for all $\xi \in U'$.
\end{theorem}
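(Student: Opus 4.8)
The plan is to prove the tangent‑space identity by two inclusions and to extract the nontrivial inclusion from the quantitative ``moreover'' statement, which I would establish first by a Newton‑type iteration. The easy inclusion $T_M(x_0)\subseteq\text{Ker}\,DF(x_0)$ comes straight from the definitions: if $d\in T_M(x_0)$, choose $o(\cdot)$ with $\|o(t)\|/t\to 0$ and $x_0+td+o(t)\in M$ for small $t>0$; then $F(x_0+td+o(t))=F(x_0)$, and Fr\'echet differentiability of $F$ at $x_0$ gives $0=t\,DF(x_0)d+DF(x_0)o(t)+o(\|td+o(t)\|)$, so dividing by $t$ and letting $t\downarrow 0$ yields $DF(x_0)d=0$.

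For the quantitative part I would construct the correction map $\xi\mapsto x(\xi)$ by a modified Newton scheme. Since $DF(x_0)\colon X\to Y$ is a surjective bounded operator between Banach spaces, the open mapping theorem gives a constant $c>0$ and a (in general nonlinear) right‑inverse selection $R\colon Y\to X$ with $DF(x_0)Ry=y$ and $\|Ry\|\le c\|y\|$ for all $y\in Y$. Using continuity of $x\mapsto DF(x)$ at $x_0$ in the operator norm, fix $\delta>0$ with $\|DF(x)-DF(x_0)\|\le 1/(2c)$ whenever $\|x-x_0\|\le\delta$. For $\xi$ near $x_0$ put $z_0:=\xi$ and $z_{n+1}:=z_n-R\bigl(F(z_n)-F(x_0)\bigr)$; writing $w_n:=R(F(z_n)-F(x_0))$, so that $DF(x_0)w_n=F(z_n)-F(x_0)$ and $z_{n+1}=z_n-w_n$, the identity
\begin{equation*}
F(z_{n+1})-F(x_0)=\int_0^1\bigl(DF(x_0)-DF(z_n-sw_n)\bigr)w_n\,ds
\end{equation*}
together with the operator‑norm bound on $B_X(x_0,\delta)$ gives $\|F(z_{n+1})-F(x_0)\|\le\tfrac12\|F(z_n)-F(x_0)\|$ as long as the segment $[z_n,z_{n+1}]$ stays in $B_X(x_0,\delta)$. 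Hence $\|z_{n+1}-z_n\|=\|w_n\|\le c\,2^{-n}\|F(\xi)-F(x_0)\|$; summing the geometric series shows, by a simultaneous induction, that the iterates remain in $B_X(x_0,\delta)$ once $\|\xi-x_0\|$ and $\|F(\xi)-F(x_0)\|$ are small enough, that $z_n\to z_\infty=:\xi+x(\xi)$, and that $\|x(\xi)\|\le 2c\|F(\xi)-F(x_0)\|$. Continuity of $F$ and $\|F(z_n)-F(x_0)\|\to 0$ then give $F(\xi+x(\xi))=F(x_0)$, which is the ``moreover'' assertion with $K=2c$ on a suitable neighbourhood $U'$.

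With this in hand the remaining inclusion $\text{Ker}\,DF(x_0)\subseteq T_M(x_0)$ follows quickly: for $d\in\text{Ker}\,DF(x_0)$ apply the construction to $\xi_t:=x_0+td$ with $t>0$ small. Since $DF(x_0)d=0$, Fr\'echet differentiability gives $\|F(\xi_t)-F(x_0)\|=o(t)$, hence $x_0+td+x(\xi_t)\in M$ and $\|x(\xi_t)\|\le K\|F(\xi_t)-F(x_0)\|=o(t)$; setting $o(t):=x(\xi_t)$ exhibits $d$ as an element of $T_M(x_0)$. Combined with the first step this proves $T_M(x_0)=\text{Ker}\,DF(x_0)$.

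I expect the Newton iteration to be the main obstacle. The delicate points are: obtaining from the open mapping theorem a right‑inverse selection with a \emph{uniform} bound $\|Ry\|\le c\|y\|$; verifying the contraction through the residuals $F(z_n)-F(x_0)$ rather than directly through the increments, which is necessary because $R$ need not be linear; and running the a priori estimate and the ``iterates stay in $B_X(x_0,\delta)$'' claim as one induction, so that the operator‑norm bound on $DF$ is legitimately available at every step. Everything else is bookkeeping with geometric series.
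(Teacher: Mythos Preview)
The paper does not supply a proof of this theorem; it is quoted verbatim from Ioffe--Tikhomirov (\emph{Theory of Extremal Problems}, \S0.2.4) and used as a black box in the proof of Theorem~\ref{theorem:tangent_cone}. So there is no ``paper's own proof'' to compare against.

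Your argument is correct and is in fact the classical one underlying the reference: bounded right inverse from the open mapping theorem, a modified Newton iteration with contraction measured in the residuals $F(z_n)-F(x_0)$, and then the tangent inclusion read off from the quantitative estimate applied to $\xi_t=x_0+td$. One small technical remark: the integral identity
\[
F(z_{n+1})-F(x_0)=\int_0^1\bigl(DF(x_0)-DF(z_n-sw_n)\bigr)w_n\,ds
\]
presupposes that $s\mapsto DF(z_n-sw_n)$ is Bochner (or Riemann) integrable, and the stated hypotheses only give continuity of $DF$ \emph{at the single point} $x_0$, not along the segment. This is not a genuine gap, since the estimate you actually need,
\[
\|F(z_{n+1})-F(x_0)\|\le\sup_{x\in[z_{n+1},z_n]}\|DF(x)-DF(x_0)\|\,\|w_n\|,
\]
follows directly from the mean value inequality for Fr\'echet differentiable maps (no continuity of the derivative required), and the supremum is controlled once the segment lies in $B_X(x_0,\delta)$. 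With that substitution the induction and the geometric series bound go through exactly as you describe.
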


We start with the following technical lemma (see also \cite{MR2801389} for finite-dimensional case).
\begin{lemma}\label{lemma:r(t)} Let $x_0\in {\mathcal F}$, where ${\mathcal F}$ is given by \eqref{set:F} and $d\in \Gamma_{\mathcal F}(x_0)$.
	For any vector function $r:\ (0,1)\rightarrow \hilbertH$ such that $\|r(t)\|t^{-1}\rightarrow 0$, as $t\downarrow 0$, there exists a number $\varepsilon_0>0$ such that
	\begin{equation}\label{condition:nonactive}
	h_i(x_0+td+r(t))<0 \text{ for all } i\in I\setminus I(x_0,d) \text{ and for all } t\in (0,\varepsilon_0),
	\end{equation} 
	where 
	%	$J(d):=I_{0}\cup (x_{0},d)$, and
	$I(x_0,d):=\{ i\in I(x_0)\mid \langle D h_i(x_0) \, , \,  d \rangle =0 \}$.
\end{lemma}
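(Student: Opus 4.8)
The statement is local and concerns only the inactive inequality constraints, so the plan is to use continuity and a compactness-type argument over the finite set $I \setminus I(x_0,d)$. First I would split $I \setminus I(x_0,d)$ into two subsets: $I \setminus I(x_0)$, the indices that are inactive already at $x_0$ (so $h_i(x_0) < 0$), and $I(x_0) \setminus I(x_0,d)$, the indices that are active at $x_0$ but for which $\langle Dh_i(x_0), d\rangle < 0$ strictly (these are active in $I(x_0)$ but the directional derivative is negative since $d \in \Gamma_{\mathcal F}(x_0)$ forces $\langle Dh_i(x_0), d\rangle \le 0$ and we have excluded the index set where equality holds).

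For an index $i \in I \setminus I(x_0)$: since $h_i$ is continuous and $h_i(x_0) < 0$, and since $x_0 + td + r(t) \to x_0$ as $t \downarrow 0$ (because $\|r(t)\| \to 0$ and $\|td\| \to 0$), there is an $\varepsilon_i > 0$ with $h_i(x_0 + td + r(t)) < 0$ for all $t \in (0,\varepsilon_i)$. For an index $i \in I(x_0)\setminus I(x_0,d)$: here $h_i(x_0) = 0$ and $\langle Dh_i(x_0), d\rangle =: -c_i < 0$. I would write the first-order expansion
\begin{equation*}
h_i(x_0 + td + r(t)) = h_i(x_0) + \langle Dh_i(x_0), td + r(t)\rangle + o(\|td + r(t)\|) = -c_i t + \langle Dh_i(x_0), r(t)\rangle + o(t),
\end{equation*}
using $C^1$-differentiability of $h_i$ at $x_0$, the bound $|\langle Dh_i(x_0), r(t)\rangle| \le \|Dh_i(x_0)\|_* \|r(t)\| = o(t)$, and $\|td + r(t)\| = O(t)$. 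Dividing by $t > 0$ gives $t^{-1} h_i(x_0 + td + r(t)) \to -c_i < 0$, so there is $\varepsilon_i > 0$ with $h_i(x_0 + td + r(t)) < 0$ on $(0,\varepsilon_i)$.

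Finally, since $I \setminus I(x_0,d)$ is a finite set, I would take $\varepsilon_0 := \min\{\varepsilon_i : i \in I \setminus I(x_0,d)\} > 0$, and then \eqref{condition:nonactive} holds simultaneously for all $i \in I \setminus I(x_0,d)$ and all $t \in (0,\varepsilon_0)$. I do not anticipate a genuine obstacle here; the only point requiring a little care is bookkeeping the two cases and making sure the first-order remainder estimate is applied correctly — in particular that $\langle Dh_i(x_0), r(t)\rangle$ and the Fréchet remainder are both $o(t)$ — which is where the hypothesis $\|r(t)\| t^{-1} \to 0$ is used. The finiteness of $I$ is what lets us pass from pointwise thresholds $\varepsilon_i$ to a single uniform $\varepsilon_0$.
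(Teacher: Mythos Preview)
Your proposal is correct and follows essentially the same approach as the paper: both proofs split $I\setminus I(x_0,d)$ into $I\setminus I(x_0)$ and $I(x_0)\setminus I(x_0,d)$ and handle each case by a first-order argument. The only cosmetic difference is that the paper uses the mean-value form $h_i(x_0+td+r(t))=h_i(x_0)+\langle Dh_i(x_0+\theta(td+r(t))),\,td+r(t)\rangle$ in both cases, whereas you invoke continuity directly for the first case and the Fr\'echet remainder for the second; the substance is identical.
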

\begin{proof}
	Let $d\in \Gamma_{\mathcal F}(x_0)$. %and let $J:=J(d):=I_0\cup (x_0,d)$, where $(x_0,d)=\{ i\in I(x_0)\, , \, \langle D h_i(x_0) \, , \,  d \rangle =0 \}$. 
	%	We start by proving that for any vector function $r(t)$, such that $\|r(t)\|t^{-1}\rightarrow 0$ as $t\downarrow 0$, there exists a number $\varepsilon_0>0$ such that
	%	\begin{equation}\label{condition:nonactive}
	%	h_i(x_0+td+r(t))<0 \text{ for all } i\in I\setminus (x_0,d) \text{ and for all } t\in (0,\varepsilon_0).
	%	\end{equation} 
	If $i\in I\setminus I(x_0)$, then $h_i(x_0)<0$ and, therefore,
	\begin{align*}
	h_i(x_0&+td+r(t))=h_i(x_0)+\langle D h_i(x_0+\theta (td+r(t))) \, , \, td+r(t) \rangle  \\
	&=h_i(x_0)+t \langle D h_i(x_0+\theta (td+r(t)) \, , \, d \rangle+t\langle D h_i(x_0+\theta (td+r(t))) \, , \, \frac{r(t)}{t} \rangle<0, 
	\end{align*} 
	where $0\leq \theta \leq 1$ for all sufficiently small $t>0$.
	
	If $i\in I(x_0)\setminus I(x_0,d)$, then
	\begin{equation*}
	h_i(x_0+td+r(t))=h_i(x_0)+t\langle D h_i(x_0) \, , \, d \rangle +o_i(t)=t\langle D h_i (x_0) \, , \, d \rangle + o_i(t),
	\end{equation*}
	where 
	\begin{align*}
	o_i(t)&:=t \langle D h_i(x_0+\theta (td+r(t)) \, , \, d \rangle+t\langle D h_i(x_0+\theta (td+r(t))) \, , \, \frac{r(t)}{t} \rangle\\
	&-t\langle D h_i(x_0) \, , \, d \rangle.
	\end{align*}
	In this case $h_i(x_0+td+r(t))<0$ for sufficiently small $t>0$ since
	\begin{equation*}
	\langle D h_i(x_0) \, , \, d \rangle <0 \text{ and } o_i(t)t^{-1}\rightarrow 0.
	\end{equation*}
	Consequently, $h_i(x_0+td+r(t))<0$, for all $i\in I\setminus I(x_0,d)$ and for all $t\in (0,\varepsilon_0)$, which proves \eqref{condition:nonactive}.%\qed
\end{proof}
Let us note that Lemma \ref{lemma:r(t)} is valid also in the case $I(x_0,d)=\emptyset$. Now we are ready to proof our main result.

\begin{theorem}\label{theorem:tangent_cone} Let $\hilbertH$ be a %reflexive
	Banach space and ${\mathcal F}\subset \hilbertH$ be given as in \eqref{set:F}.  
	Assume that RCRCQ holds   for ${\mathcal F}$ at $x_0\in {\mathcal F}$. Then Abadie condition holds, i.e. $\Gamma_{\mathcal F}(x_0)=T_{\mathcal F}(x_0)$.
	
	Moreover, for each $d\in T_{\mathcal F}(x_{0})$ there is a vector function $r:\ (0,1)\rightarrow \hilbertH$, $\|r(t)\|/t\rightarrow 0$ when $t\downarrow 0$, such that for all $t$ sufficiently small \begin{equation} 
	\begin{array}{l}
	h_i(x_{0}+td+r(t))=0,\ i\in J(d),\\
	h_{\ell}(x_{0}+td+r(t))\le 0,\ \ell \in I\setminus J(d),
	\end{array} \quad J(d):=I_{0}\cup I(x_{0},d).
	\end{equation} 
	
	Additionally, whenever $J(d)\neq \emptyset$,   $d\in\text{ker } Dh(x_{0}) $,
	where $h:=[h_{i_j}]$, $j=1,\dots,k$,  $$	\rank \{D h_i(x_0+td+r(t)), i\in J(d) \}=\rank \{D h_i(x_0), i\in J(d) \}=k$$ for all $t$ sufficiently small.
	
	%	
	%  and	$	d\in\text{ker } Dh(x_{0}) $ 	
	%	where $h:=[h_{i_j}]$, $j=1,\dots,k$,  $	\rank \{D h_i(x_0+td+r(t)), i\in J(d) \}=\rank \{D h_i(x_0), i\in J(d) \}=k$.
	
	% and
	%$I(x_0,d)=\{ i\in I(x_0)\, , \, \langle D h_i(x_0) \, , \,  d \rangle =0 \}$ and
	% $\ell\in I\setminus I(x_{0})$.
	%	 and $\Lambda(x_0)\neq \emptyset$ for any continuously differentiable function $f$ having a local minimum on $F$ at the point $x_0\in C$.
\end{theorem}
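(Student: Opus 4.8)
The strategy is to prove both inclusions of the Abadie condition $\Gamma_{\mathcal F}(x_0)=T_{\mathcal F}(x_0)$, the nontrivial one being $\Gamma_{\mathcal F}(x_0)\subset T_{\mathcal F}(x_0)$, and to extract along the way the extra information on the curve $r(t)$ and on the rank. First I would record the easy inclusion $T_{\mathcal F}(x_0)\subset\Gamma_{\mathcal F}(x_0)$, which is standard and needs only differentiability of the $h_i$: if $d\in T_{\mathcal F}(x_0)$ with witness $o(t)$, then for $i\in I_0$ dividing $h_i(x_0+td+o(t))=0$ by $t$ and letting $t\downarrow 0$ gives $\langle Dh_i(x_0),d\rangle=0$, and for $i\in I(x_0)$ the inequality $h_i(x_0+td+o(t))\le 0$ gives $\langle Dh_i(x_0),d\rangle\le 0$. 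So the whole content is the reverse inclusion together with the construction of $r(t)$.

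For the reverse inclusion, fix $d\in\Gamma_{\mathcal F}(x_0)$ and set $J(d):=I_0\cup I(x_0,d)$ with $I(x_0,d)=\{i\in I(x_0)\mid\langle Dh_i(x_0),d\rangle=0\}$. The idea is to produce a curve $t\mapsto x_0+td+r(t)$ that stays exactly on the surface $\{h_i=0,\ i\in J(d)\}$ while the remaining active-but-not-tight constraints ($i\in I(x_0)\setminus I(x_0,d)$) and the inactive ones are automatically satisfied — the latter being precisely the content of Lemma \ref{lemma:r(t)}, which handles all $i\in I\setminus I(x_0,d)$ once $\|r(t)\|/t\to 0$. To build the curve on the $J(d)$-surface, let $\{i_1,\dots,i_k\}\subset J(d)$ index a maximal linearly independent subset of $\{Dh_i(x_0),\ i\in J(d)\}$ and put $h:=[h_{i_1},\dots,h_{i_k}]$. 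By RCRCQ, CRC holds at $x_0$ for the family $\{h_i,\ i\in J(d)\}$, hence for $h$. Apply Proposition \ref{proposition:functional_dependence}(2): every $h_\ell$, $\ell\in J(d)$, equals $g_\ell(h_{i_1}(x),\dots,h_{i_k}(x))$ near $x_0$ with $g_\ell\in C^1$, and since $h_\ell(x_0)=0$ for all $\ell\in J(d)$ we get $g_\ell(0,\dots,0)=0$; thus $\{h_i=0,\ i\in J(d)\}$ and $\{h=0\}=\{h_{i_j}=0,\ j=1,\dots,k\}$ coincide in a neighbourhood of $x_0$. Now $Dh(x_0):\hilbertH\to\mathbb R^k$ is surjective (its $k$ rows are linearly independent), so Ljusternik's Theorem \ref{theorem:Ljusternik} applies to $F:=h$: the tangent space to $M=\{x\mid h(x)=h(x_0)=0\}$ at $x_0$ is $\ker Dh(x_0)$, and — crucially — there is a $C^0$ correction $\xi\mapsto x(\xi)$ with $h(\xi+x(\xi))=0$ and $\|x(\xi)\|\le K\|h(\xi)\|$. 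Check that $d\in\ker Dh(x_0)$: for $j=1,\dots,k$, $i_j\in J(d)$ so either $i_j\in I_0$ (then $\langle Dh_{i_j}(x_0),d\rangle=0$ since $d\in\Gamma_{\mathcal F}(x_0)$) or $i_j\in I(x_0,d)$ (then $\langle Dh_{i_j}(x_0),d\rangle=0$ by definition of $I(x_0,d)$); hence $d\in\ker Dh(x_0)$, which also establishes the "Additionally" clause $d\in\ker Dh(x_0)$.

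Next I would set $r(t):=x(x_0+td)$ for small $t>0$. Then $h(x_0+td+r(t))=h(x_0)=0$, i.e. $h_{i_j}(x_0+td+r(t))=0$ for $j=1,\dots,k$, and by the functional-dependence identity $h_\ell(x_0+td+r(t))=g_\ell(0,\dots,0)=0$ for every $\ell\in J(d)$, which is the first displayed line of the conclusion. For the estimate $\|r(t)\|/t\to 0$: since $d\in\ker Dh(x_0)$, $\|h(x_0+td)\|=\|h(x_0+td)-h(x_0)-tDh(x_0)d\|=o(t)$ by differentiability of $h$, so $\|r(t)\|\le K\|h(x_0+td)\|=o(t)$; this also guarantees $r(t)$ lands in the neighbourhoods where all the above local identities hold, for $t$ small. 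With $\|r(t)\|/t\to 0$ in hand, Lemma \ref{lemma:r(t)} yields $h_i(x_0+td+r(t))<0\le 0$ for all $i\in I\setminus I(x_0,d)\supseteq I\setminus J(d)$ and $t\in(0,\varepsilon_0)$, giving the second displayed line. Therefore $x_0+td+r(t)\in{\mathcal F}$ for all small $t$, so $d\in T_{\mathcal F}(x_0)$, completing $\Gamma_{\mathcal F}(x_0)=T_{\mathcal F}(x_0)$ and the moreover/additionally statements; the final rank equality is immediate from RCRCQ applied to the index set $J(d)$, since $x_0+td+r(t)$ lies in the RCRCQ-neighbourhood $U(x_0)$ for $t$ small, so $\rank\{Dh_i(x_0+td+r(t)),i\in J(d)\}=\rank\{Dh_i(x_0),i\in J(d)\}=k$.

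**Main obstacle.** The delicate points are (i) verifying that the functional-dependence identity $h_\ell=g_\ell\circ h$ from Proposition \ref{proposition:functional_dependence} genuinely forces $\{h_i=0,\ i\in J(d)\}$ to coincide locally with $\{h=0\}$ — this needs $g_\ell(0)=0$, which follows from $h_\ell(x_0)=0$ — so that applying Ljusternik to the reduced system $h$ (rather than to the full, possibly non-surjective system indexed by $J(d)$) still lands the curve inside the original constraint set; and (ii) the bookkeeping of neighbourhoods, ensuring that for $t$ small enough the point $x_0+td+r(t)$ simultaneously lies in the RCRCQ-neighbourhood, in the domain of the $g_\ell$'s, and in the range of validity of Lemma \ref{lemma:r(t)} — all of which is controlled once $\|r(t)\|/t\to 0$. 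The surjectivity of $Dh(x_0)$ needed for Ljusternik is free because the $i_j$ were chosen to make the rows of $Dh(x_0)$ independent, so $Dh(x_0):\hilbertH\to\mathbb R^k$ is onto.
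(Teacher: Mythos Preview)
Your proof is correct and follows essentially the same route as the paper: reduce the system $\{h_i=0,\ i\in J(d)\}$ to a full-rank subsystem $h=[h_{i_1},\dots,h_{i_k}]$ via Proposition~\ref{proposition:functional_dependence}(2), apply Ljusternik's Theorem~\ref{theorem:Ljusternik} to the reduced system to produce $r(t)$, recover the remaining equalities in $J(d)$ from the functional-dependence identity $g_\ell(0,\dots,0)=0$, and invoke Lemma~\ref{lemma:r(t)} for the indices in $I\setminus I(x_0,d)$. The only cosmetic differences are that the paper cites the tangent-space conclusion of Ljusternik rather than the explicit correction map with bound $\|x(\xi)\|\le K\|h(\xi)\|$ that you use (both are contained in Theorem~\ref{theorem:Ljusternik}), and the paper singles out the trivial case $J(d)=\emptyset$ explicitly (take $r\equiv 0$ and apply Lemma~\ref{lemma:r(t)}), which you should also record.
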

\begin{proof}
	The inclusion $T_{\mathcal F}(x_0)\subset \Gamma_{\mathcal F}(x_0)$ is immediate. To see the converse, take any $d\in \Gamma_{\mathcal F}(x_0)$. We start by  considering the case $J:=J(d)\neq  \emptyset$.
	By RCRCQ of ${\mathcal F}$ at $x_0$,  we have
	\begin{equation*}
	\rank \{D h_i(x_0+td+r), i\in J \}=\rank \{D h_i(x_0), i\in J \}=k,
	\end{equation*}
	for $(t,r)$ in some neighbourhood of $(0,0)\in \mathbb{R}\times \hilbertH$. By Remark \ref{crc-remark},   there  exist indices $i_1,\dots,i_k$, such that $D h_{i_1}(x_0+td+r),\dots,D h_{i_k}(x_0+td+r)$ are linearly independent for $(t,r)$ in some neighbourhood of $(0,0)$. Without loss of generality, we can assume that $i_j=j$, $j=1,\dots,k$. 
	
	If $k=n$, i.e. $J\setminus\{1,\dots,k\}=\emptyset$, then, by 
	applying Ljusternik Theorem  \ref{theorem:Ljusternik} to
	\begin{equation*}
	M := \{ x\in \hilbertH \mid h_i(x)=h_i(x_0)=0,\ i\in J=\{1,2,\dots,n\}  \},
	\end{equation*} the conclusion holds. 
	
	If $k<n$ then, by (2) of Proposition \ref{proposition:functional_dependence}, applied to $h_i$, $i\in 1\dots,k$, there exist  functions $g_l$, $l\in J\setminus \{1,\dots,k\}$ of class $C^1$, such that
	\begin{equation}\label{eq:implicit}
	h_l(x_0+td+r)=g_l(h_1(x_0+td+r),\dots,h_k(x_0+td+r)),
	\end{equation} 
	for $(t,r)$ in some neighbourhood of  $(0,0)$.
	
	Consider the system
	\begin{equation}\label{eq:system_1}
	h_i(x_0+td+r)=0,\quad i\in J
	\end{equation} 
	with respect to variables $t,r$.
	Let us note that  system \eqref{eq:system_1} is satisfied for $(t,r)=(0,0).$ 
	
	Obviously, in some neighbourhood of $(0,0)$,  system \eqref{eq:system_1} is equivalent to
	\begin{equation}\label{eq:system_2}\left\{
	\begin{array}{c}
	h_1(x_0+td+r)=0\\
	\dots\\
	h_k(x_0+td+r)=0
	\end{array}\right.
	\end{equation}
	with additional condition
	\begin{equation}\label{eq:dependence}
	h_l(x_0+td+r)=g_l(h_1(x_0+td+r),\dots,h_k(x_0+td+r))=0,\ l\in J\setminus \{1,\dots,k\}.
	\end{equation}
	Note that $g_l(h_1(x_0),\dots,h_k(x_0))=0$, $l\in J\setminus \{1,\dots,k\}$ and therefore $g_l(0,\dots,0)=0$, $l\in J\setminus \{1,\dots,k\}$.
	
	We have 
	\begin{align*}
	\langle D h_i(x_0) \, , \, d \rangle = 0,\quad  i\in J=I_0\cup I(x_0,d). 
	\end{align*}
	
	%	We consider the following cases.
	%	\begin{enumerate}
	%		\item 
	Hence,  $d\in \ker Dh(x_0)$, where $h(x)=[h_1(x)\dots,h_k(x)]$. By applying Ljusternik Theorem \ref{theorem:Ljusternik} with $F=h$ at $x_0$, we obtain that $d\in T_{M}(x_0)$, where
	\begin{equation*}
	M:=\{ x \in \hilbertH \mid h(x)=0 \}.
	\end{equation*}
	This means that there exist $\varepsilon>0$ and a function $r:\ [0,\varepsilon)\rightarrow \hilbertH$, $\|r(t)\|t^{-1}\rightarrow 0$, $t\downarrow 0,$ such that 
	\begin{equation*}\left\{
	\begin{array}{c}
	h_1(x_0+td+r(t))=0\\
	\dots\\
	h_k(x_0+td+r(t))=0.
	\end{array}\right.
	\end{equation*}
	By \eqref{eq:dependence}, $h_i(x_0+td+r(t))=0$, $i\in J$ for $t\in [0,\varepsilon]$. By Lemma \ref{lemma:r(t)}, there exists $\varepsilon_0>0$ such that
	\begin{equation}
	x_0+td+r(t)\in {\mathcal F}\quad t\in [0,\min\{\varepsilon_0,\varepsilon\}].
	\end{equation}
	Thus, $d\in T_{\mathcal F}(x_0)$.
	
	Now, let us consider the  case $J=\emptyset$ (i.e. the case when both $I_0=\emptyset$ and $I(x_0,d)=\emptyset$). Then, by Lemma 5.1, for any vector function $r:\ (0,1)\rightarrow \hilbertH$, $\|r(t)\|/t\rightarrow 0$ when $t\downarrow 0$ there exists there exists $\varepsilon>0$ such that
	\begin{equation}
	x_0+td+r(t)\in {\mathcal F}\quad t\in [0,\varepsilon],
	\end{equation}
	i.e.,  $d\in T_{\mathcal F}(x_0)$. 
	%\qed
	%		\item  $d\notin \ker h^\prime(x_0)$, where $h_1(x)=[h_1(x)\dots,h_k(x))]$. Since $r\in \hilbertH=(\hilbertH_1\oplus \hilbertH_2)$, let $r=[r_1,r_2]$, where $r_1\in \hilbertH_1$, $r_2\in \hilbertH_2$. By Lyusternik Theorem applied to $h$ at $x_0$, we have that there exists a neighbourhood $U^\prime$ of $x_0$ and a function $x(\xi)$ such that
	%		\begin{equation*}
	%		h_i(\xi +x(\xi))=0,\ i=1,\dots,k\quad \text{for all }\xi \in U^\prime.
	%		\end{equation*}
	%		Then there exists $\varepsilon>0$ such that $x_0+td\in U^\prime$ for all $t\in [0,\varepsilon]$. Let $r(t)=x(x_0+td)$, $t\in [0,\varepsilon]$.
	%	\end{enumerate}
\end{proof} 

The following corollary refers to the special case, where there is no inequality constraints in the definition of the set ${\mathcal F}$.
\begin{corollary}
	Suppose that $I=\emptyset$, i.e. there is no inequalities in the representation \eqref{set:F} of the set ${\mathcal F}$ i.e.
	$$
	{\mathcal F}=\{x\in \hilbertH \mid h_{i}(x)=0\ \ i=1,\dots,n\}
	$$ and CRC holds at $x_{0}\in {\mathcal F}$, i.e. 
	there  exists a neigbourhood $U(x_0)$ s.t. 
	$$
	\rank \{D h_i(x_0), i=1,2,\dots, n \}=\rank \{D h_i(x), i= 1,2,\dots,n \}=k
	$$
	for all $x\in U(x_0)$.
	Then 
	$$
	T_{\mathcal F}(x_0)= \{ d: \langle D h_i(x_0) \, , \, d\rangle =0,\ i=1,2,\dots,n\}.
	$$	
	Moreover, if $I_k=\{i_1, i_2, \dots, i_k\}$  is such that $D h_{i_j}(x_0)$, $i_j\in I_k$ are linearly independent, then   
	\begin{equation*}
	T_{\mathcal F}(x_0)=\text{ker} \left[ \begin{matrix} D h_{i_1}(x_0)\\\vdots  \\D h_{i_k}(x_0) \end{matrix}  \right].
	\end{equation*}
\end{corollary}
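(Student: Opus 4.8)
The plan is to obtain this statement as a direct specialization of Theorem \ref{theorem:tangent_cone}. The first step is to check that, when $I=\emptyset$, the hypothesis RCRCQ at $x_0$ reduces exactly to the stated CRC. Indeed, with $I=\emptyset$ we have $I(x_0)=\emptyset$, so the only index set $J$ with $I_0\subset J\subset I_0\cup I(x_0)$ is $J=I_0=\{1,\dots,n\}$; hence the constant-rank requirement of Definition \ref{def:rcrcq} is precisely
\begin{equation*}
\rank\{Dh_i(x),\ i=1,\dots,n\}=\rank\{Dh_i(x_0),\ i=1,\dots,n\}=k\quad\text{for }x\in U(x_0),
\end{equation*}
which is the assumption of the corollary. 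Likewise, with $I=\emptyset$ there are no active inequality constraints, so the linearized cone of \eqref{set:F} at $x_0$ is
\begin{equation*}
\Gamma_{\mathcal F}(x_0)=\{d\in\hilbertH\mid\langle Dh_i(x_0)\,,\,d\rangle=0,\ i=1,\dots,n\}.
\end{equation*}
Theorem \ref{theorem:tangent_cone} then yields $T_{\mathcal F}(x_0)=\Gamma_{\mathcal F}(x_0)$, which is the first displayed equality.

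The second equality is then pure finite-dimensional linear algebra. Let $I_k=\{i_1,\dots,i_k\}$ be such that $Dh_{i_1}(x_0),\dots,Dh_{i_k}(x_0)$ are linearly independent; since $\rank\{Dh_i(x_0),\ i=1,\dots,n\}=k$, these form a maximally linearly independent subfamily, so for each $\ell\in\{1,\dots,n\}\setminus I_k$ there are scalars $\alpha_j^\ell$ with $Dh_\ell(x_0)=\sum_{j=1}^k\alpha_j^\ell\,Dh_{i_j}(x_0)$. Consequently, for any $d\in\hilbertH$, $\langle Dh_\ell(x_0)\,,\,d\rangle=\sum_{j=1}^k\alpha_j^\ell\langle Dh_{i_j}(x_0)\,,\,d\rangle$, so the system $\langle Dh_i(x_0)\,,\,d\rangle=0$, $i=1,\dots,n$, is equivalent to $\langle Dh_{i_j}(x_0)\,,\,d\rangle=0$, $j=1,\dots,k$. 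This gives $\Gamma_{\mathcal F}(x_0)=\ker[Dh_{i_1}(x_0);\dots;Dh_{i_k}(x_0)]$, and combined with the first part it yields the asserted formula for $T_{\mathcal F}(x_0)$.

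I do not expect a genuine obstacle: the corollary is Theorem \ref{theorem:tangent_cone} read in the special case $I=\emptyset$, plus one line of linear algebra rewriting the linearized cone through a maximal linearly independent subfamily of gradients. The only point worth making explicit in the write-up is the first step --- that without inequalities RCRCQ collapses to the single constant-rank condition on $\{Dh_1(x_0),\dots,Dh_n(x_0)\}$ --- after which the main theorem applies verbatim. (Alternatively one could reprove the statement from scratch by combining part (2) of Proposition \ref{proposition:functional_dependence} with Ljusternik's Theorem \ref{theorem:Ljusternik} applied to the reduced, regular system $x\mapsto[h_{i_1}(x),\dots,h_{i_k}(x)]$, which is exactly the $J\neq\emptyset$ branch of the proof of Theorem \ref{theorem:tangent_cone}; but invoking that theorem directly is cleaner.)
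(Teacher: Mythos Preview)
Your proposal is correct and follows essentially the same approach as the paper: the first displayed equality is taken as an immediate specialization of Theorem \ref{theorem:tangent_cone} (the paper leaves this implicit, being a corollary), and the ``Moreover'' part is handled by writing each $Dh_\ell(x_0)$, $\ell\notin I_k$, as a linear combination of the $Dh_{i_j}(x_0)$, $i_j\in I_k$, exactly as you do. Your write-up is in fact more explicit than the paper's, which records only the linear-dependence relation and the observation that the resulting kernel is independent of the choice of $I_k$.
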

\begin{proof}
	
	By assumption, for any	 $\ell\notin I_k$ 
	$$
	D h_{\ell}(x_0)= \sum\limits_{i\in I_k} \lambda_i^{\ell} D h_{i}(x_0).
	$$
	
	This shows that $T_{\mathcal F}(x_0)$ does not depend upon the choice of the set  $I_k$. %\qed	
	%Let us assume that $d\in kerA$ where $A$ corresponds to $I_k=\{1,2,\dots,k\}$. Consider $J_k=\{\}$
\end{proof}

\section{Functional dependence/independence without CRC}\label{section:functional_dependence_without_CRC}

In our main theorem (Theorem  \ref{theorem:tangent_cone}) we used constant rank condition (and RCRCQ) to be able to apply Proposition \ref{proposition:functional_dependence}, where we used the concept of functional dependence/independence according to Definition \ref{def:manifolds}.  

In this additional section we investigate functional dependence/independence with respect to Definition \ref{def:manifolds} without CRC. Moreover, in Subsection \ref{subsection_functional_dependence_others} we review  other most common concepts of functional dependence/independence (Definitions \ref{def:Boothby}, \ref{def:Raghavan}, \ref{def:Laszlo}).

\begin{proposition} Let $\hilbertH$ be a %reflexive
	Banach space. 
	Let $x_0\in U$, $U\subset \hilbertH$ open 
	and  $f_1,\dots,f_\kappa:\ U\rightarrow \mathbb{R}$.
	Suppose that in every neighbourhood $U(x_0)$ there exists $x\in U(x_0)$ such that $D f_1(x),\dots,D f_n(x)$ are linearly independent. Then functions $f_1,\dots,f_\kappa$ are functionally independent at $x_0$.
\end{proposition}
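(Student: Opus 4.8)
The plan is to argue by contradiction, reducing everything to a single application of the chain rule. Suppose, contrary to the claim, that $f_1,\dots,f_\kappa$ are functionally dependent at $x_0$ in the sense of Definition \ref{def:manifolds}. Then there are neighbourhoods $U(x_0)$ and $V(y_0)$, with $y_0:=(f_1(x_0),\dots,f_\kappa(x_0))$, and a function $F\colon V(y_0)\rightarrow\mathbb{R}$ of class $C^1$ such that $F(f_1(x),\dots,f_\kappa(x))=0$ for all $x\in U(x_0)$ and $DF(y_0)\neq 0$.

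First I would shrink the domains so as to pass from the single point $y_0$ to a whole neighbourhood. Since $F$ is $C^1$, its derivative $DF$ is continuous, so there is a neighbourhood $V'(y_0)\subseteq V(y_0)$ on which $DF(y)\neq 0$ for every $y\in V'(y_0)$. Since each $f_i$ is $C^1$, hence continuous, near $x_0$, the map $x\mapsto(f_1(x),\dots,f_\kappa(x))$ is continuous at $x_0$ and sends $x_0$ to $y_0$; therefore there is a neighbourhood $U'(x_0)\subseteq U(x_0)$ with $(f_1(x),\dots,f_\kappa(x))\in V'(y_0)$ for all $x\in U'(x_0)$.

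Next I would invoke the hypothesis: inside $U'(x_0)$ there exists a point $x^{*}$ at which $Df_1(x^{*}),\dots,Df_\kappa(x^{*})$ are linearly independent in $\hilbertH^{*}$. Differentiating the identity $F(f_1(x),\dots,f_\kappa(x))=0$ at $x=x^{*}$ by the chain rule gives
\begin{equation*}
\sum_{i=1}^{\kappa}\frac{\partial F}{\partial y_i}\bigl(f(x^{*})\bigr)\,Df_i(x^{*})=0 .
\end{equation*}
By linear independence of $Df_1(x^{*}),\dots,Df_\kappa(x^{*})$ all the scalar coefficients $\frac{\partial F}{\partial y_i}(f(x^{*}))$ vanish, i.e. $DF\bigl(f(x^{*})\bigr)=0$. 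But $f(x^{*})\in V'(y_0)$, where $DF$ is nowhere zero — a contradiction. Hence $f_1,\dots,f_\kappa$ cannot be functionally dependent at $x_0$, which proves the proposition.

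There is no deep obstacle here; the only point requiring care is the order of the two shrinking steps — one must first use continuity of $DF$ to produce a neighbourhood $V'(y_0)$ on which $DF\neq 0$, and only then pull it back through $f$ to obtain $U'(x_0)$, so that the point $x^{*}$ supplied by the hypothesis genuinely lands in the region where $DF$ is nonzero. (Implicitly we also use that, since $F$ is differentiable on $V(y_0)$ and each $f_i$ is $C^1$, the composition $F\circ(f_1,\dots,f_\kappa)$ is differentiable at $x^{*}$, so the chain rule applies there.)
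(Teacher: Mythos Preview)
Your proof is correct, and it takes a more elementary route than the paper's. The paper's argument proceeds by picking, in any neighbourhood of $x_0$, a point $x'$ where the $Df_i$ are linearly independent, invoking Proposition~\ref{proposition:functional_dependence}\,(1) (which rests on the Rank Theorem, Theorem~\ref{theorem:rank}) to conclude that $f_1,\dots,f_\kappa$ are functionally independent at $x'$, hence $DF(f(x'))=0$, and then appealing to continuity of $DF\circ f$ to pass to $x_0$. By contrast, you bypass the Rank Theorem entirely: after shrinking to a neighbourhood $V'(y_0)$ on which $DF\neq 0$ and pulling it back through $f$, a single application of the chain rule at the point $x^{*}$ supplied by the hypothesis already forces $DF(f(x^{*}))=0$, which is the contradiction. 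Your argument is self-contained and avoids the heavier machinery; the paper's fits into the framework it has already built and yields the slightly stronger intermediate fact that $f(x')$ is an interior point of the image, though that extra information is not needed here.
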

\begin{proof}
	Let $F:\ V\rightarrow \mathbb{R}$ be a function of class $C^1$ defined on a neighbourhood $V(y_0)$ such that $F(f_1(x),\dots,f_\kappa(x))=0$ for any $x$ in some neighbourhood $U(x_0)$.
	
	We show that %under our assumption 
	it must be $DF(y_0)=0$, where $y_0=(f_1(x_0),\dots,f_\kappa(x_0))$. By assumption, let $U(x_0)$ be a neighbourhood of $x_0$ and $x^\prime \in U(x_0)$ be such that  $D f_1(x^\prime),\dots,D f_\kappa(x^\prime)$ are linearly independent. 
	
	There exists a neighbourhood  %$U(x^\prime)$  such that 
	$U(x^\prime)\subset U(x_0)$ and $D f_1(z^\prime),\dots,D f_\kappa(z^\prime)$ are linearly independent for all $z\in U(x^\prime)$. By (1) of  Proposition \ref{proposition:functional_dependence}, it must be $DF(f(x^\prime))=0$. By smoothness of function $F$ and $f$, the latter equality implies  $DF(f(x_0))=0$.%\qed
\end{proof}
\begin{proposition}(Local stability of functional dependence)\label{local_stability_functional_dependence} Let $\hilbertH$ be a  Banach space. 
	If $f_1,\dots,f_\kappa:U\rightarrow \mathbb{R}$, $U\subset \hilbertH$ open, are functionally dependent at $x_0\in U$, then there exists a neighbourhood $U(x_0)$  such that $f_1,\dots,f_\kappa$ are functionally dependent at any $x\in U(x_0)$.
\end{proposition}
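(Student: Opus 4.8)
The plan is to propagate functional dependence by keeping the \emph{same} certifying function $F$ and only moving the base point. By Definition \ref{def:manifolds}, functional dependence of $f_1,\dots,f_\kappa$ at $x_0$ supplies a neighbourhood $U(x_0)$, a neighbourhood $V(y_0)$ of $y_0:=(f_1(x_0),\dots,f_\kappa(x_0))$, and a $C^1$ function $F:\ V(y_0)\rightarrow\mathbb{R}$ with $F(f_1(x),\dots,f_\kappa(x))=0$ for all $x\in U(x_0)$ and $DF(y_0)\neq 0$; moreover, implicit in the notion of functional dependence at $x_0$ is that the $f_i$ are $C^1$, hence continuous, on a neighbourhood of $x_0$, which after shrinking we may take to be $U(x_0)$ itself. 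The identity $F\circ f\equiv 0$ already holds on all of $U(x_0)$, so clause (1) of Definition \ref{def:manifolds} is free for every nearby base point; the only thing to arrange is clause (2), the nonvanishing of the derivative of $F$ at the new value $f(x_1)$.

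First I would use the continuity of $DF$ (which holds because $F$ is $C^1$) to form the open set $W:=\{\,y\in V(y_0)\mid DF(y)\neq 0\,\}$, an open neighbourhood of $y_0$. Next, writing $f:=(f_1,\dots,f_\kappa):\ U(x_0)\rightarrow\mathbb{R}^\kappa$, I would set $\Omega:=f^{-1}(W)\cap U(x_0)$; since $f$ is continuous on $U(x_0)$ and $f(x_0)=y_0\in W$, the set $\Omega$ is open and contains $x_0$. I claim $\Omega$ is the required neighbourhood. Indeed, fix any $x_1\in\Omega$ and put $y_1:=f(x_1)\in W$. The functions $f_i$ are $C^1$ near $x_1$ because $x_1\in U(x_0)$. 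Taking as the neighbourhoods in Definition \ref{def:manifolds} the sets $U(x_1):=\Omega$ and $V(y_1):=W$, and as certifying function the restriction $F|_{W}$ (still of class $C^1$), we obtain $F(f(x))=0$ for all $x\in\Omega\subset U(x_0)$ and $D(F|_{W})(y_1)=DF(y_1)\neq 0$ since $y_1\in W$. Hence $f_1,\dots,f_\kappa$ are functionally dependent at $x_1$.

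There is essentially no hard step: the statement is a soft stability fact and, unlike Proposition \ref{proposition:functional_dependence}, it uses neither CRC nor the rank theorem. The only points needing a little care are bookkeeping ones --- that $\Omega$ genuinely qualifies as a neighbourhood of each of its points, that $W$ qualifies as a neighbourhood of each value $f(x_1)$, and that the standing $C^1$ hypothesis on the $f_i$ (which is built into the definition of functional dependence at a point) is inherited on the smaller neighbourhood $\Omega$. I would write the argument so that each line states explicitly which clause of Definition \ref{def:manifolds} it verifies, keeping the (trivial) logic transparent.
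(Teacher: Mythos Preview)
Your proposal is correct and follows essentially the same approach as the paper: keep the same certifying function $F$, use continuity of $DF$ to obtain an open set where $DF\neq 0$, and pull it back via $f$ to get the desired neighbourhood. The paper's $V_1(y_0)$ and $U(x_0)\cap f^{-1}(V_1(y_0))$ are exactly your $W$ and $\Omega$.
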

\begin{proof}
	Let $f=[f_1,\dots,f_\kappa]$. Assume that  $f_1,\dots,f_\kappa$ are functionally dependent at $x_0$, i.e. there exist neighbourhoods $U(x_0)$, $V(y_0)$, $y_0=f(x_0)$, and a function $F:\ V(y_0)\rightarrow \mathbb{R}$ such that $DF(y_0)\neq 0$ and $F(f_1(x),\dots,f_\kappa(x))=0$ for all $x\in U(x_0)$.
	
	Since $DF(y_0)\neq 0$ and $F$ is of class $C^1$ there exists a neighbourhood $V_1(y_0)$ such that $DF(y)\neq 0$ for all $y\in V_1(y_0)$. Let $x^\prime \in U(x_0)\cap f^{-1}(V_1(y_0))$,  $U(x^\prime) :=U(x_0)\cap f^{-1}(V_1(y_0))$ and $y^\prime:=f(x^\prime)$. Then function $F:\ V_1(y_0)\cap V(y_0) \rightarrow \mathbb{R}$ satisfies $DF(y^\prime)\neq 0$ and $F(f_1(x),\dots,f_\kappa(x))=0$ for all $x\in U(x^\prime)$. Hence $f_1,\dots,f_\kappa$ are functionally dependent at any $x^\prime\in U(x_0)\cap f^{-1}(V_1(y_0))$.%\qed
\end{proof}

The fact below relates functional dependence with linear dependence of gradients.
%\begin{fact}\label{fact:functional_dependendence_to_gradient}
%	Let $x_0\in \hilbertH$ and suppose that $f_1,\dots,f_n:\ \hilbertH\rightarrow \mathbb{R}^n$ are of class $C^1$.
%	Suppose that $f_1,\dots,f_n$ are functionally dependent at $x_0$.
%	Then  $D f_1(x_0),\dots, D f_n(x_0)$ are linearly dependent.
%\end{fact}
%\begin{proof}
%	The proof follows immediately from Remark \ref{crc-remark} and (1) of Proposition \ref{proposition:functional_dependence}.\qed
%\end{proof}

\begin{fact}\label{fact:dependence_stablility} Let $\hilbertH$ be a %reflexive
	Banach space. 
	Let $x_0\in U$, $U\subset \hilbertH$ open, and  $f_1,\dots,f_\kappa:\ U\rightarrow \mathbb{R}$. % be of class $C^1$.
	Suppose that $f_1,\dots,f_\kappa$ are functionally dependent at $x_0$. Then there exists a neighbourhood $U(x_0)$ such that $D f_1(x),\dots, D f_\kappa(x)$, $x\in U(x_0)$, are linearly dependent.
\end{fact}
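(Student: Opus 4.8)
The plan is to differentiate the relation that witnesses functional dependence and to extract from it a nontrivial linear combination of $Df_1(x),\dots,Df_\kappa(x)$ that vanishes identically near $x_0$; the coefficients of this combination will be the partial derivatives of the witnessing function $F$, and the whole point is to keep one of those coefficients away from zero on a \emph{whole} neighbourhood, not just at $x_0$.

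First I would write $f=[f_1,\dots,f_\kappa]$ and unpack Definition \ref{def:manifolds}: there are neighbourhoods $U(x_0)$, $V(y_0)$ with $y_0=f(x_0)$ and a $C^1$ function $F:\ V(y_0)\rightarrow\mathbb{R}$ such that $F(f_1(x),\dots,f_\kappa(x))=0$ for all $x\in U(x_0)$ and $DF(y_0)\neq 0$. Since $F$ is $C^1$ and $f$ is $C^1$ in a neighbourhood of $x_0$, the composition $F\circ f$ is $C^1$ there, and differentiating the identity $F\circ f\equiv 0$ by the chain rule yields
\[
0=D(F\circ f)(x)=\sum_{i=1}^\kappa \frac{\partial F}{\partial y_i}\bigl(f(x)\bigr)\,Df_i(x)
\]
for every $x$ in a neighbourhood of $x_0$. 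This is already a linear relation among the $Df_i(x)$; it only remains to argue that it is nontrivial on a neighbourhood.

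Next, since $DF(y_0)\neq 0$ there is an index $i_0$ with $\tfrac{\partial F}{\partial y_{i_0}}(y_0)\neq 0$; by continuity of $DF$ this partial derivative is nonzero on some neighbourhood $V_1(y_0)\subset V(y_0)$, and by continuity of $f$ there is a neighbourhood $U_1(x_0)\subset U(x_0)$ with $f(U_1(x_0))\subset V_1(y_0)$. Then for every $x\in U_1(x_0)$ the scalars $\lambda_i(x):=\tfrac{\partial F}{\partial y_i}(f(x))$, $i=1,\dots,\kappa$, are not all zero (because $\lambda_{i_0}(x)\neq 0$), while $\sum_{i=1}^\kappa\lambda_i(x)Df_i(x)=0$ by the displayed identity. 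Hence $Df_1(x),\dots,Df_\kappa(x)$ are linearly dependent for all $x\in U_1(x_0)$, which is the assertion with $U(x_0)$ taken to be $U_1(x_0)$; note in particular that no constant rank hypothesis is used. I do not expect a genuine obstacle here: the argument is just the chain rule in the Banach-space setting together with the openness of the set $\{\,\partial F/\partial y_{i_0}\neq 0\,\}$. The only subtle point — and the reason one passes to the smaller neighbourhood $U_1(x_0)$ — is to guarantee that the vanishing linear combination retains a coefficient that stays bounded away from zero throughout the neighbourhood, rather than degenerating at nearby points.
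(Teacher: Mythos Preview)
Your argument is correct. The chain rule applied to the identity $F\circ f\equiv 0$ gives exactly the nontrivial linear relation you write, and shrinking the neighbourhood so that $\partial F/\partial y_{i_0}\circ f$ stays nonzero is the right way to ensure the relation is nontrivial at every point, not only at $x_0$.

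The paper, however, proceeds differently. It does not differentiate the identity directly; instead it combines three earlier results. First, Proposition~\ref{local_stability_functional_dependence} (local stability of functional dependence) propagates functional dependence from $x_0$ to every point of a neighbourhood $U(x_0)$. Then, for each $x\in U(x_0)$, the contrapositive of Proposition~\ref{proposition:functional_dependence}\,(1) is invoked: were $Df_1(x),\dots,Df_\kappa(x)$ linearly independent, Remark~\ref{crc-remark} would give CRC of rank $\kappa$ at $x$, and Proposition~\ref{proposition:functional_dependence}\,(1) would force functional \emph{independence} at $x$, a contradiction. This route recycles the Rank Theorem machinery already set up in the paper, but is considerably heavier than your direct chain-rule computation. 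Your approach is more elementary and self-contained (it needs only the $C^1$ chain rule and continuity of $DF$), and in particular makes it transparent that no constant rank hypothesis is required; the paper's route, by contrast, has the advantage of showing that the fact is an immediate by-product of the structural results it has already established.
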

\begin{proof}
	The proof follows immediately from Remark \ref{crc-remark}, (1) of Proposition \ref{proposition:functional_dependence} and Proposition \ref{local_stability_functional_dependence}.%\qed
\end{proof}

The following proposition  provides sufficient conditions for functional independence.
%\section{Functional dependence without CRC}
\begin{proposition}\label{proposition:functional_independence2}
	Let $x_0\in U$, $U\subset\hilbertH$ open,  $f_1,\dots,f_\kappa:\ U\rightarrow \mathbb{R}$. % be of class $C^1$ and $f=[f_1,\dots,f_\kappa]$.  
	If, for any neighbourhood $U(x_0)$,  $\text{int}\, f(U(x_0))\neq \emptyset$,
	then $f_1,\dots,f_\kappa$ are functionally independent at $x_0$.
	
\end{proposition}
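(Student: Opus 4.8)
The plan is to prove the contrapositive in spirit: show that functional dependence forces $f(U(x_0))$ to have empty interior for \emph{some} neighbourhood $U(x_0)$. So suppose toward a contradiction that $f_1,\dots,f_\kappa$ are functionally dependent at $x_0$. By Definition \ref{def:manifolds}, there exist neighbourhoods $U(x_0)$, $V(y_0)$ with $y_0=(f_1(x_0),\dots,f_\kappa(x_0))$ and a $C^1$ function $F:\ V(y_0)\rightarrow\mathbb{R}$ with $DF(y_0)\neq 0$ and $F(f_1(x),\dots,f_\kappa(x))=0$ for all $x\in U(x_0)$. The first step is to observe that the last identity says precisely that $f(U(x_0))\subset F^{-1}(0)\cap V(y_0)$.

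Next I would use $DF(y_0)\neq 0$ to localize: since $F$ is $C^1$, there is a smaller neighbourhood $V_1(y_0)\subset V(y_0)$ on which $DF(y)\neq 0$ for all $y\in V_1(y_0)$. Shrinking $U(x_0)$ to $U_1(x_0):=U(x_0)\cap f^{-1}(V_1(y_0))$ (nonempty since it contains $x_0$, and open by continuity of $f$), we still have $f(U_1(x_0))\subset F^{-1}(0)\cap V_1(y_0)$. The key geometric fact is that $F^{-1}(0)\cap V_1(y_0)$ has empty interior in $\mathbb{R}^\kappa$: if it contained an open ball $B$, then $F\equiv 0$ on $B$, hence $DF\equiv 0$ on $B$, contradicting $DF(y)\neq 0$ throughout $V_1(y_0)$. (Alternatively, one may invoke the Implicit Function Theorem / submersion theorem to see that $F^{-1}(0)$ near $y_0$ is a $C^1$ hypersurface, hence nowhere dense.) Therefore $\text{int}\, f(U_1(x_0))=\emptyset$, contradicting the hypothesis that $\text{int}\, f(U(x_0))\neq\emptyset$ for \emph{every} neighbourhood $U(x_0)$ — in particular for $U_1(x_0)$. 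This contradiction shows $f_1,\dots,f_\kappa$ are functionally independent at $x_0$.

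I do not anticipate a serious obstacle here; the argument is essentially a one-line topological observation (the zero set of a submersion is nowhere dense) combined with the quantifier structure of the hypothesis. The only point requiring a little care is making sure the neighbourhood shrinking is done correctly: the hypothesis quantifies over \emph{all} neighbourhoods $U(x_0)$, so once we produce the specific neighbourhood $U_1(x_0)$ on which $f$ has image with empty interior, we have our contradiction without needing to worry that some other neighbourhood might behave differently. One should also note that this proposition is logically a partial converse to Proposition \ref{proposition:functional_independence2}'s neighbours and is consistent with Fact \ref{fact:dependence_stablility}, since functional dependence makes the gradients linearly dependent and hence, by the rank considerations, $f$ cannot be locally open.
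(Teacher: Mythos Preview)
Your proof is correct. Both your argument and the paper's hinge on the same elementary fact: if $F\equiv 0$ on an open set then $DF\equiv 0$ there. The organizational difference is that you argue by contradiction and localize \emph{once}---using continuity of $DF$ to find $V_1(y_0)$ on which $DF\neq 0$, so that $F^{-1}(0)\cap V_1(y_0)$ has empty interior and hence so does $f(U_1(x_0))$---whereas the paper argues directly: for each $m$ it intersects $U(x_0)$ with $f^{-1}(B(y_0,1/m))$, invokes the hypothesis to obtain a nonempty open set $A_m\subset f(U_m''(x_0))\subset B(y_0,1/m)$ on which $F\equiv 0$ (hence $DF\equiv 0$), picks $y_m\in A_m$, and passes to the limit $y_m\to y_0$ using continuity of $DF$ to conclude $DF(y_0)=0$. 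Your route is a bit more economical since it avoids the sequence construction; the paper's route yields the marginally stronger observation that $DF$ vanishes on open sets arbitrarily close to $y_0$, but this extra information is not needed for the proposition.
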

\begin{proof} Let $\hilbertH$ be a  Banach space. Let $U(x_0)$ be a neighbourhood of $x_0$ and $V(y_0)$ be a neighbourhood of $y_0=f(x_0)$ and $F:\ V\rightarrow \mathbb{R}$ be of class $C^1$ such that $F(f_1(x),\dots,f_\kappa(x))=0$ for all $x\in U(x_0)$. 
	%Note that,  for any  function $F:\ V\rightarrow \mathbb{R}$ of class $C^1$, if $F(f_1(x),\dots,f_n(x))=0 $ for all $x\in U(x_0)$, then $F(y)=0$ for all $y\in \text{int}\, f(U(x_0))$. Thus $DF(y)=0$ for all $y\in \text{int}\, f(U(x_0))$.
	
	By the  continuity of $f$, for any $m\in \mathbb{N}$  there exists $U_m^\prime(x_0)$ such that $f(U_m^\prime(x_0))\subset B(y_0,\frac{1}{m})$. Let $U_m^{\prime\prime}(x_0)=U(x_0)\cap U_m^\prime(x_0)$. Then, by assumption, $\text{int} (f(U_m^{\prime\prime}(x_0)))\neq \emptyset$ and, moreover, $A_m:= [\text{int} f(U_m^{\prime\prime}(x_0))]\cap B(y_0,\frac{1}{m})=\text{int} f(U_m^{\prime\prime}(x_0))$ is a nonempty open set. Since $F(y)=0$ for all $y\in A_m$ we have $DF(y)=0$ for all $y\in A_m$.
	
	Since $F:\ V\rightarrow \mathbb{R}$ is of class $C^1$, there exists a sequence $y_m\rightarrow y_0$, $y_m\in A_n$, such that $DF(y_m)=0$.
	%	$F(w_1,\dots,w_n)=0$
	%$F(f_1(x),\dots,f_n(x))=0 $ 
	%	for all $x\in U(x_0)$ then $DF(y)=0$ for all $y\in \text{int} f(U^{\prime\prime}(x_0))\cap V(y_0)$.
	By the smoothness of $F$, it must be $DF(y_0)=0$. In consequence,  functions  $f_1,\dots,f_\kappa$ are functionally independent at $x_0$.%\qed
	%, then $DF(y_0)\neq 0$ implies $DF(y)\neq 0$ for all $y$ in some neighbourhood of $V(y_0)$ of $y_0$.  However $DF(z)=0$ for all $z\in \text{int} f(V^{\prime\prime}(x_0))\cap V(y_0)$.
\end{proof}

\subsection{Functional dependence/independence, other definitions}\label{subsection_functional_dependence_others}
Here we compare the concept of functional dependence given in Definition \ref{def:manifolds} with other concepts of functional dependence appearing in the literature.

Let us note that in the proof of Theorem \ref{theorem:tangent_cone} and Proposition \ref{proposition:functional_independence2} we use the concept of functional dependence as defined in Definition \ref{def:manifolds}. In general, without condition 2. of Definition \ref{def:manifolds} we are not able to deduce formula \eqref{eq:implicit}.
%		The definition of functional dependence at $x_0$ given by Zorich in paragraph 8.6.3 of \cite{mathematical_analysis_Zorich}   can be rewritten in Hilbert spaces as follows.
%	\begin{definition}\label{def:Zorich} %(Functional dependence 2)
%		Let $U\subset \hilbertH$ be an open set and let  functions $f_i:\ \hilbertH\rightarrow\mathbb{R}$, $i=1,\dots,n$ be continuous functions. Functions $f_1\dots,f_n$ are said to be \textit{functionally dependent} at $x_0\in U$ if there exists a neighbourhood $U(x_0)$ and a neighbourhood $V(y_0)$, where $y_0:=(f_1(x_0),\dots,f_n(x_0))\in \mathbb{R}^n$ and a continuous function $F:\ V\rightarrow \mathbb{R}$ such that
%		\begin{enumerate}
%			\item $F(f_1(x),\dots,f_n(x))=0$	for all $x\in U(x_0)$,
%			\item $F\not\equiv 0$ on $V(y_0)$.
%		\end{enumerate}	
%	\end{definition}

The definition of functional dependence at $x_0$ given  in Chapter II.7 of \cite{an_introduction_to_differentialbe_manifolds_Boothby}  can be rewritten in Banach spaces as follows.
\begin{definition}\label{def:Boothby} %(Functional dependence 2) 
	Let $\hilbertH$ be a  Banach space. 
	Let $U\subset \hilbertH$ be an open set and let  functions $f_i:\ U\rightarrow\mathbb{R}$, $i=1,\dots,\kappa$ be of class $C^1$ in a neighbourhood of $x\in U$. Functions $f_1\dots,f_\kappa$ are said to be \textit{functionally dependent} at $x_0$ if there exists a neighbourhood $U(x_0)$ and a neighbourhood $V(y_0)$, where $y_0:=(f_1(x_0),\dots,f_\kappa(x_0))\in \mathbb{R}^\kappa$ and a  function $F:\ V\rightarrow \mathbb{R}$ of class $C^1$ such that
	\begin{enumerate}
		\item $F(f_1(x),\dots,f_\kappa(x))=0$	for all $x\in U(x_0)$,
		\item $F\not\equiv 0$ on $V(y_0)$.
	\end{enumerate}	
\end{definition}

Definition \ref{def:Boothby} for continuous functions $f_1\dots,f_\kappa$, $F$ in finite-dimensional settings was given in  Paragraph 8.6.3 of \cite{mathematical_analysis_Zorich}.
\begin{remark} 
	%	Let us note that in paragraph 8.6.3 of \cite{mathematical_analysis_Zorich}, the functional independence is defined for continuous functions $f_i:\ \mathbb{R}^m\rightarrow \mathbb{R}$, $i=1,\dots,n$, the  function $F:\ \mathbb{R}^n\rightarrow\mathbb{R}$ appearing in the definition is assumed to be only continuous and the condition $DF(y_0)\neq 0$ is replaced by $F\not\equiv 0$ on any neighbourhood of $y_0$. 
	
	%	In \cite{an_introduction_to_differentialbe_manifolds_Boothby}
	%	%	On the other hand, in \cite{manifolds_tensor_vol2} (Example 2.5.16) 	
	%	the functional dependence is defined for smooth functions $f_i:\ \mathbb{R}^m\rightarrow \mathbb{R}$, $i=1,\dots,n$, the function $F:\ \mathbb{R}^n\rightarrow\mathbb{R}$ is assumed to be smooth and such that $F\not\equiv 0$ on any neighbourhood of $y_0$ (see also Definition \ref{def:Boothby}). 
	
	Clearly, if $f_1,\dots,f_\kappa$ are functionally dependent at $x_0$ in the sense of 
	Definition \ref{def:manifolds},
	%the definition given in \cite{manifolds_tensor_vol2},
	then $f_1,\dots,f_\kappa$ are functionally dependent at $x_0$ in the sense of 
	Definition
	%s \ref{def:Zorich} and 
	\ref{def:Boothby}.
	%the definition given in \cite{an_introduction_to_differentialbe_manifolds_Boothby,mathematical_analysis_Zorich}.
	%	  From among these three definitions 
	%	  Definition \ref{def:Zorich}
	%	  %the definition of functional dependence given in \cite{mathematical_analysis_Zorich} 
	%	  is the most general.	
	%	Let us note that the fact that  smooth function $F:\ V\rightarrow \mathbb{R}$ of class $C^1$ is such that there exists a neighbourhood $V(y_0)$ such that $DF(y)\neq 0$, $y\in V(y_0)$ implies that	
	%	smooth function $F:\ V\rightarrow \mathbb{R}$ of class $C^1$ is such that $F\not\equiv 0$ on any neighbourhood of $y_0:=(f_1(x_0),\dots,f_n(x_0))$ is equivalent to the fact that 
	%	, i.e. $DF\not\equiv 0$ on any neighbourhood of $y_0$. The condition $DF\not\equiv 0$ is used in the definition of functional dependence  in \cite{manifolds_tensor_vol2}. 
\end{remark}
The example below illustrates the difference between of definitions of functional dependence given in 
Definition \ref{def:manifolds} and
% Definitions \ref{def:Zorich}, 
Definition \ref{def:Boothby}.
%\cite{manifolds_tensor_vol2} and \cite{an_introduction_to_differentialbe_manifolds_Boothby}. 
Let us note that the functions $f_1,f_2$ from the example below  do not satisfy the CRC condition at $x_0=(0,0)$. 
\begin{example}\label{example:functional_dependence}
	Let $f_1,f_2:\ \mathbb{R}^2\rightarrow\mathbb{R}$ be defined as $f_1(x_1,x_2)=x_1^2$, $f_2(x_1,x_2)=x_2^2$ and $x_0=(0,0)$. We will show that $f_1, f_2$ are functionally dependent at $x_0$ in the sense 
	of %Definition \ref{def:Zorich} and 
	Definition \ref{def:Boothby}
	%of definitions given in  \cite{an_introduction_to_differentialbe_manifolds_Boothby}
	and are  functionally independent  at $x_0$ in the sense 
	of Definition \ref{def:manifolds}.
	% of definition given in    \cite{manifolds_tensor_vol2}. 
	
	Let $F:\ \mathbb{R}^2\rightarrow \mathbb{R}$ %=\max\{0,-y_1^3,-y_2^3\}$. 
	be defined as follows
	\begin{equation*}
	F(y_1,y_2):=\left\{\begin{array}{lll}
	0 & \text{if} & y_1\geq 0 \ \wedge\ y_2\geq 0,\\
	y_1^2 & \text{if} & y_1<0\ \wedge  \ y_2 \geq 0,\\
	y_2^2 & \text{if} & y_1\geq 0 \ \wedge\ y_2<0,\\
	y_1^2+y_2^2 & \text{if} & y_1<0 \ \wedge y_2<0 
	\end{array}  \right. .
	\end{equation*}
	Then $F$ is of class $C^1$ and
	\begin{equation*}
	F(f_1(x_1,x_2),f_2(x_1,x_2))=F(x_1^2,x_2^2)=0
	\end{equation*} 
	for all $(x_1,x_2)\in \mathbb{R}^2$. Moreover, in any neighbourhood of $y_0:=(f_1(x_0),f_2(x_0))=(0,0)$ there exists $y=(y_1,y_2)$ such that $y_1<0$ or $y_2<0$, i.e. $F(y)\neq 0$. Hence, $f_1, f_2$ are functionally dependent at $x_0$ in the sense of 
	%Definition \ref{def:Zorich} and 
	Definition \ref{def:Boothby}
	%definition given in \cite{an_introduction_to_differentialbe_manifolds_Boothby}.
	
	Now we show that  $f_1, f_2$ are functionally independent  at $x_0$ in the sense 
	of Definition \ref{def:manifolds}.
	%of definition  given in \cite{manifolds_tensor_vol2}.
	By contrary, suppose, that $f_1, f_2$ are functionally dependent at $x_0$ in the sense
	of Definition \ref{def:manifolds}.
	% of definition given in \cite{manifolds_tensor_vol2}. 
	Then there exists  a function $F:\ \mathbb{R}^2\rightarrow \mathbb{R}$ of class $C^1$ such that $DF(y_0)\neq 0$ at $y_0:=(f_1(x_0),f_2(x_0))=(0,0)$ and $F(x_1^2,x_2^2)=0$ for all $(x_1,x_2)$ in some neighbourhood of $x_0$.
	
	Let $U(x_0)$ be any neighbourhood of $x_0$ and $V(y_0)$ be any neighbourhood of $y_0$. Let $U^\prime(x_0)=\{ (x_1,x_2)\in \mathbb{R}^2 \mid \ (\text{sgn}(x_1)x_1^2, \text{sgn}(x_2)x_2^2)\in U(x_0) \}$, where $\text{sgn}$ is the signum function. Let us take any $y^\prime\in V(y_0)\cap U^\prime(x_0)$ such that $y^\prime=(y_1^\prime,y_2^\prime)$, where $y_1^\prime>0$ and $y_2^\prime>0$. Let $V(y^\prime)=V(y_0)\cap U^\prime(x_0)\cap  \mathbb{R}_{++}^2$, where $R_{++}^2:=\{ (x_1,x_2) \mid x_1>0 \ \wedge x_2>0 \}$. Let us note that $y_0\in \text{cl}(V(y^\prime))$.
	%Then for any $y =(y_1,y_2)\in V(y^\prime)$ we have that $y_1=\sqrt{y_1}^2$,  $y_2=\sqrt{y_2}^2$ and $\|y\|\leq 1$.  
	Then for all $y=(y_1,y_2) \in V(y^\prime)$ we have $(\sqrt{y_1},\sqrt{y_2})\in U^\prime(x_0)$ and, moreover, $F(y)=F(\sqrt{y_1}^2,\sqrt{y_2}^2)=0$. Thus, $DF(y)=0$ for all $y \in V(y^\prime)\subset V(y_0)$, which is a contradiction  with the assumption $DF(y_0)\neq 0$.  
\end{example}

\begin{definition}
	Let $\Omega\subset \hilbertH$ be a nonempty set. Subset $A\subset \Omega$ is \textit{nowhere dense} in $\Omega$ if for all $U\subset \Omega$, $U$ open in $\Omega$, $U\neq \emptyset$ there exists $V\subset U$, $V$ open in $\Omega$, $V\neq \emptyset$ such that $A\cap V=\emptyset$, i.e. $A\subset \Omega\setminus V$.
\end{definition}

The definition of functional dependence on a set $\Omega$  given  in Chapter 1 of \cite{lectures_on_topics_in_analysis_Raghavan} is formulated in Banach spaces for $C^{\infty}$ functions. Here we reformulate it in Banach space settings for $C^1$ functions in the following way. 
%can be rewritten in Hilbert spaces as follows.
%According to \cite{lectures_on_topics_in_analysis_Raghavan} the functional dependence at $x_0\in \hilbertH$  is defined as follows.
\begin{definition}\label{def:Raghavan}
	Let $\hilbertH$ be a  Banach space. Let $\Omega\subset \hilbertH$ be an open set and let functions $f_i:\ \Omega\rightarrow\mathbb{R}$, $i=1,\dots,\kappa$ be of class $C^1$ in a neighbourhood of $x_0\in \Omega$. Functions $f_1\dots,f_\kappa$ are said to be \textit{functionally dependent} at $x_0$ if there 
	exist a neighbourhood $U(x_0)\subset \Omega$ and a neighbourhood $V(y_0)$, 
	%$f(U(x_0))\subset V(y_0)$, 
	$y_0:=(f_1(x_0),\dots,f_\kappa(x_0))\in \mathbb{R}^\kappa$, and a function $F:\ V(y_0)\rightarrow \mathbb{R}$ of class $C^1$ such that
	\begin{enumerate}
		\item $F(f_1(x),\dots,f_\kappa(x))=0$ for all $x\in U(x_0)$,
		\item $F^{-1}(0)$ is nowhere dense in $V(y_0)$.
	\end{enumerate}
\end{definition}
The following example  shows that the functional dependence in the sense of Definition \ref{def:Raghavan} does not imply functional dependence in the sense of Definition \ref{def:manifolds}.
\begin{example}\label{example:not_manifolds}
	Let $f_1(t)=t^3$, $f_2(t)=t^2$, $t\in \mathbb{R}$. Then functions $f_1,f_2$ are functionally dependent at $t=0$ in the sense of Definition \ref{def:Raghavan}, since by taking  $F(x,y)=x^3-y^2$ we get:
	\begin{enumerate}
		\item $F(f_1(t),f_2(t))=0$ for all $t\in \mathbb{R}$
		\item $F^{-1}(0)$ is nowhere dense in any neighbourhood of $(0,0)$.
	\end{enumerate}
	On the other hand for any neighbourhood of $U(0)$ and $V((0,0))$ and for any function $F:\ V((0,0))\rightarrow \mathbb{R}$ of class $C^1$ if $F(f_1(t),f_2(t))=0$ for all $t\in U(0)$, then
	\begin{align*}
	DF(0,0)=
	\frac{\partial F (f_1(0),f_2(0))}{\partial f_1} \frac{ d f_1(0)}{dt} + 
	\frac{\partial F (f_1(0),f_2(0))}{\partial f_2} \frac{ d f_2(0)}{dt}= 0.
	\end{align*} 
	Therefore functions $f_1,f_2$ are functionally independent at $t=0$ in the sense of Definition \ref{def:manifolds}.
\end{example}

\begin{proposition}\label{propostion:Raghavan_imples_Boothby}
	Let $\hilbertH$ be a  Banach space. If functions $f_i:\ \Omega\rightarrow\mathbb{R}$, $i=1,\dots,\kappa$ of class $C^1$ are functionally dependent at  $x_0\in \Omega$ in the sense of Definition \ref{def:Raghavan} then they are functionally dependent in the sense of 
	%Definitions \ref{def:Zorich} and 
	Definition \ref{def:Boothby}.
\end{proposition}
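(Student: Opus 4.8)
The plan is to unpack both definitions and show that the witnessing function $F$ from Definition \ref{def:Raghavan} already satisfies the (weaker) requirement of Definition \ref{def:Boothby}. Concretely, suppose $f_1,\dots,f_\kappa$ are functionally dependent at $x_0$ in the sense of Definition \ref{def:Raghavan}. Then there are neighbourhoods $U(x_0)\subset\Omega$ and $V(y_0)$, $y_0=(f_1(x_0),\dots,f_\kappa(x_0))$, and a $C^1$ function $F:\ V(y_0)\rightarrow\mathbb{R}$ with $F(f_1(x),\dots,f_\kappa(x))=0$ for all $x\in U(x_0)$ and with $F^{-1}(0)$ nowhere dense in $V(y_0)$. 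The first condition is literally condition (1) of Definition \ref{def:Boothby}, so the only thing to check is condition (2): $F\not\equiv 0$ on $V(y_0)$.

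The key step is the observation that a set that is nowhere dense in the nonempty open set $V(y_0)$ cannot equal all of $V(y_0)$. Indeed, if $F\equiv 0$ on $V(y_0)$ then $F^{-1}(0)=V(y_0)$, and then taking $U=V(y_0)$ itself in the definition of nowhere dense would force the existence of a nonempty open $V\subset V(y_0)$ with $V(y_0)\cap V=\emptyset$, i.e. $V=\emptyset$ — a contradiction. Hence $F^{-1}(0)\subsetneq V(y_0)$, so there is $y\in V(y_0)$ with $F(y)\neq 0$, which is exactly condition (2) of Definition \ref{def:Boothby}. Thus the same pair of neighbourhoods and the same function $F$ witness functional dependence at $x_0$ in the sense of Definition \ref{def:Boothby}.

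There is essentially no obstacle here; the whole content is the elementary topological remark that a nowhere dense subset of a nonempty open set is proper. The only point requiring a moment's care is to make sure $V(y_0)$ is genuinely nonempty (it is, as a neighbourhood of the point $y_0$) so that the nowhere-density condition can be instantiated with $U=V(y_0)$ and actually yields a contradiction rather than being vacuous.
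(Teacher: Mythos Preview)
Your proof is correct and is essentially the same as the paper's: both observe that condition (1) carries over verbatim and then use the definition of nowhere dense in $V(y_0)$ to conclude $F\not\equiv 0$ on $V(y_0)$. The only cosmetic difference is that the paper applies the nowhere-density condition directly (to an arbitrary nonempty open $U\subset V(y_0)$) to produce a point where $F\neq 0$, whereas you phrase the same step as a proof by contradiction with $U=V(y_0)$.
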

\begin{proof}
	Assume that that $f_1,\dots,f_\kappa$ are functionally dependent at  $x_0\in \Omega$ in the sense of Definition \ref{def:Raghavan}. Then there are neighbourhood $U(x_0)$ and a neighbourhood $V(y_0)$, $V(y_0)\subset f(U(x_0))$, where $y_0:=(f_1(x_0),\dots,f_\kappa(x_0))\in \mathbb{R}^\kappa$ and a function $F:\ V\rightarrow \mathbb{R}$ of class $C^1$ such that
	\begin{enumerate}
		\item $F(f_1(x),\dots,f_\kappa(x))=0$ for all $x\in U(x_0)$,
		\item $F^{-1}(0)$ is nowhere dense in $V(y_0)$.
	\end{enumerate}
	According to the definition of nowhere dense set, for every nonempty open set $U\subset V(y_0)$,  there exists an open set a nonempty set $V\subset U$, such that $V\cap F^{-1}(0)=\emptyset$, i.e.  $F(x)\neq 0$ for every $x\in V$. In conclusion, $F$ satisfies condition 2. of 
	%Definitions \ref{def:Zorich} and 
	Definition \ref{def:Boothby}.%\qed
\end{proof}
\begin{proposition}\label{proposition:Raghavan_to_Laszlo} Let $\hilbertH$ be a %reflexive 
	Banach space. 
	If functions $f_i:\ \Omega\rightarrow\mathbb{R}$, $i=1,\dots,\kappa$ of class $C^1$ are functionally dependent at  $x_0\in \Omega$ in the sense of Definition \ref{def:Raghavan}, then there exists a neighbourhood $U(x_0)$ such that $D f_1(x), \dots,D f_\kappa(x)$, $x\in U(x_0)$ are linearly dependent.
\end{proposition}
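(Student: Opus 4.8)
The plan is to argue by contradiction, exploiting the fact that a nowhere dense set contains no nonempty open set: I would show that if $Df_1(x),\dots,Df_\kappa(x)$ fail to be linearly dependent at some point near $x_0$, then $F$ would vanish identically on a nonempty open subset of $V(y_0)$, contradicting condition (2) of Definition \ref{def:Raghavan}.

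First I would unpack the hypothesis: write $f=[f_1,\dots,f_\kappa]$, fix a neighbourhood $U(x_0)$ on which the $f_i$ are $C^1$, a neighbourhood $V(y_0)$ of $y_0:=f(x_0)$, and a $C^1$ function $F:V(y_0)\to\mathbb{R}$ with $F\circ f\equiv 0$ on $U(x_0)$ and $F^{-1}(0)$ nowhere dense in $V(y_0)$. Then I would set $\Omega_0:=U(x_0)\cap f^{-1}(V(y_0))$, which by continuity of $f$ and $f(x_0)\in V(y_0)$ is an open neighbourhood of $x_0$, and reduce the proposition to the claim that $Df_1(x),\dots,Df_\kappa(x)$ are linearly dependent for every $x\in\Omega_0$.

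To prove this claim I would suppose, for contradiction, that there is $x'\in\Omega_0$ at which $Df_1(x'),\dots,Df_\kappa(x')$ are linearly independent, so that $Df(x'):\hilbertH\to\mathbb{R}^\kappa$ is surjective. The key step is to establish local openness of $f$ at $x'$, i.e.\ that $f$ maps every neighbourhood of $x'$ onto a set containing an open neighbourhood of $f(x')$. I would do this by the standard finite-dimensional reduction: pick $u_1,\dots,u_\kappa\in\hilbertH$ with $Df(x')u_j$ equal to the $j$-th standard basis vector of $\mathbb{R}^\kappa$, put $G(t):=f\bigl(x'+\sum_{j=1}^\kappa t_ju_j\bigr)$ for $t\in\mathbb{R}^\kappa$ near $0$, observe that $G$ is $C^1$ near $0$ with $G(0)=f(x')$ and $DG(0)=\mathrm{id}_{\mathbb{R}^\kappa}$, and apply the Inverse Function Theorem (\cite[Theorem 2.5.7]{manifolds_tensor_vol2}) to conclude that $G$ is a diffeomorphism of a neighbourhood of $0$ onto an open neighbourhood $W$ of $f(x')$. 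After shrinking, I may assume every point $x'+\sum_j t_ju_j$ involved lies in $\Omega_0$ and, since $V(y_0)$ is open and $f(x')\in V(y_0)$, that $W\subset V(y_0)$. (Alternatively, linear independence of the $Df_i(x')$ together with Remark \ref{crc-remark} shows CRC holds at $x'$ with constant rank $\kappa$, and the Rank Theorem \ref{theorem:rank}, or the argument in the proof of part (1) of Proposition \ref{proposition:functional_dependence}, delivers the same local openness.)

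Finally, for each $y\in W$ I would choose $x=x'+\sum_j t_ju_j\in\Omega_0\subset U(x_0)$ with $f(x)=y$, so that $F(y)=F(f(x))=0$; hence the nonempty open set $W$ is contained in $F^{-1}(0)\cap V(y_0)$, contradicting that $F^{-1}(0)$ is nowhere dense in $V(y_0)$. This forces $Df_1(x),\dots,Df_\kappa(x)$ to be linearly dependent throughout $\Omega_0$, which is the assertion. The main obstacle is the local-openness step for a $C^1$ map into $\mathbb{R}^\kappa$ with surjective derivative — essentially a repackaging of the Inverse Function/Rank Theorem in the Banach space setting — together with the bookkeeping that keeps the image $W$ inside $V(y_0)$ while its $f$-preimages stay inside $U(x_0)$; everything else is routine.
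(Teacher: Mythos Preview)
Your proposal is correct and follows essentially the same route as the paper: both argue by contradiction, produce a point $x'$ near $x_0$ where $Df_1(x'),\dots,Df_\kappa(x')$ are linearly independent, invoke local openness of $f$ at $x'$ (the paper cites the Rank Theorem \ref{theorem:rank} and the proof of part (1) of Proposition \ref{proposition:functional_dependence}, while you give an explicit finite-dimensional section plus the Inverse Function Theorem, mentioning the Rank Theorem as an alternative), and conclude that $F^{-1}(0)$ contains a nonempty open subset of $V(y_0)$, contradicting nowhere density. Your version is simply more explicit about the bookkeeping that keeps the image inside $V(y_0)$ and the preimages inside $U(x_0)$.
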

\begin{proof}
	%		Assume that $f_i:\ \Omega\rightarrow\mathbb{R}$, $i=1,\dots,n$ of class $C^1$ are functionally dependent at  $x_0\in \Omega$ in the sense of Definition \ref{def:Raghavan}. Then there exist a neighbourhood $\bar{U}(x_0)\subset \Omega$ and a neighbourhood $\bar{V}(y_0)$, $f(\bar{U}(x_0))\subset V(y_0)$, where $y_0:=(f_1(x_0),\dots,f_n(x_0))\in \mathbb{R}^n$ and function $F:\ \bar{V}(y_0)\rightarrow \mathbb{R}$ of class $C^1$ such that
	%		\begin{enumerate}
	%			\item $F(f_1(x),\dots,f_n(x))=0$ for all $x\in \bar{U}(x_0)$,
	%			\item $F^{-1}(0)$ is nowhere dense in $\bar{V}(y_0)$.
	%		\end{enumerate}
	%		By condition 1. we have $f(\bar{U}(x_0))\subset F^{-1}(0)$. 
	Suppose, by contrary  that for any neighbourhood $U(x_0)$ there exists $x^\prime\in U(x_0)$ such that $D f_1(x^\prime), \dots,D f_\kappa(x^\prime)$ are linearly independent. Then there exists a neighbourhood  $U(x^\prime)$ such that $D f_1(x), \dots,D f_\kappa(x)$, $x\in U(x^\prime)$ are linearly independent. By  Theorem \ref{theorem:rank}, $f(U(x^\prime))$  has a nonempty interior (see e.g. proof of \eqref{proposition:functional_dependence:statement1} of Proposition \ref{proposition:functional_dependence}) and $f(U(x^\prime))\subset f(U(x_0))\subset F^{-1}(0)$, hence $F^{-1}(0)$ would not be nowhere dense.%\qed
\end{proof}

The definition of functional dependence at $x_0$ given  in Chapter 4 of \cite{Ordinary_and_partial_diff-eq_for_beginner_Laszlo}   can be rewritten in Banach spaces as follows.
\begin{definition}\label{def:Laszlo}
	Let $\hilbertH$ be a Banach space and let functions $f_i:\ U\rightarrow\mathbb{R}$, $U\subset \hilbertH$ open, $i=1,\dots,\kappa$ be of class $C^1$ in a neighbourhood of $x_0\in U$. Functions $f_1\dots,f_\kappa$ are \textit{functionally dependent} at $x_0$, if 
	\begin{equation}\label{cond:Szekelyhidi}
	\rank \{ D f_i(x_0),\ i=1,\dots,\kappa  \}<\kappa.
	\end{equation}
	Otherwise, we say that $f_1\dots,f_\kappa$ are \textit{functionally independent} at $x_0$. 
	
	Let $\Omega\subset U$ be an open set.  We say that  functions $f_i:\ U\rightarrow\mathbb{R}$, of class $C^1$, $i=1,\dots,\kappa$, are  \textit{functionally dependent on $\Omega$} if \eqref{cond:Szekelyhidi} holds for all $x_0\in \Omega$. Functions $f_i:\ U\rightarrow\mathbb{R}$, of class $C^1$, $i=1,\dots,\kappa$, are  \textit{functionally independent on $\Omega$} %in the sense of Sz\'{e}kelyhidi 
	if 
	\begin{equation}\label{cond:Szekelyhidi2}
	\rank \{ D f_i(x),\ i=1,\dots,\kappa  \}=\kappa \quad \text{for all}\ x\in \Omega.
	\end{equation}
\end{definition}

In \cite[Theorem 4.1.3]{Ordinary_and_partial_diff-eq_for_beginner_Laszlo} it was shown that for $f_i:\ \mathbb{R}^\kappa\rightarrow \mathbb{R}$, $i=1,\dots,\kappa$, Definition \ref{def:Boothby}  and Definition  \ref{def:Laszlo} are equivalent.

\begin{remark} Let $\hilbertH$ be a %reflexive
	Banach space. By Fact \ref{fact:dependence_stablility}, if  functions $f_i:\ U\rightarrow\mathbb{R}$, of class $C^1$, $i=1,\dots,\kappa$, are functionally dependent at $x_0\in U$, $U\subset \hilbertH$ open, in the sense of Definition \ref{def:manifolds}, then there exists a neighbourhood  $U(x_0)$ such that they are functionally dependent on $U(x_0)$ in the sense of Definition \ref{def:Laszlo}.
\end{remark}

The following example illustrates the fact that the functional dependence in the sense of Definition \ref{def:Laszlo} in a neighbourhood of $x_0$ does not imply the functional dependence at $x_0$ in the sense of Definition \ref{def:manifolds}.

%if $f_i:\ U\rightarrow\mathbb{R}$, of class $C^1$, $i=1,\dots,n$, are  functionally dependent at some neighbourhood of $x_0\in U$, $U\subset \hilbertH$ open, in the sense of Definition \ref{def:Laszlo}, then it is not true in general, that  $f_i:\ \Omega\rightarrow\mathbb{R}$ are dependent at $x_0\in \Omega$ in the sense of Definition \ref{def:manifolds}.

\begin{example}\label{example:whirpool}
	Let us consider the \textit{geometric tornado} function $f=[f_1,f_2,f_3]$, $f_1,f_2,f_3:\ \mathbb{R}\rightarrow \mathbb{R}$  defined as follows (see Fig. \ref{fig:whirpool})\\
	\begin{minipage}{0.5\linewidth}
	    	\begin{align*}
		&f_1(x)=\left\{\begin{array}{ll}
		x^3\sin (\frac{1}{x}) & \text{if}\ x\neq 0,\\
		0 & \text{otherwise,}
		\end{array}\right.\\
		&f_2(x)=\left\{\begin{array}{ll}
		x^3\cos (\frac{1}{x}) & \text{if}\ x\neq 0,\\
		0 & \text{otherwise,}
		\end{array}\right.\\
		&f_3(x)=x^3.		
		\end{align*}
	\end{minipage}\hspace{-2cm}
	\begin{minipage}{0.75\linewidth}
		\begin{figure}[H]
			\centering	
			\includegraphics[scale=1]{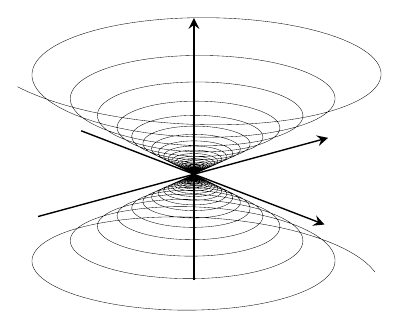}
			\caption{\label{fig:whirpool}The image of $\mathbb{R}$ under $f=[f_1,f_2,f_3]$.}
		\end{figure}
	\end{minipage}\\[0.5cm]	
	We will show, that $f_1,f_2,f_3$ are functionally dependent in the sense of Definition \ref{def:Laszlo} on any open set $\Omega$ which contains $x_0=0$ and are functionally independent at $x_0$ in the sense of Definition \ref{def:manifolds}. 	
	
	Derivatives of  functions $f_1,f_2,f_3$ are as follows
	\begin{align*}
	&f_1^\prime(x)=\left\{\begin{array}{ll}
	3x^2 \sin(\frac{1}{x}) - x\cos (\frac{1}{x}) & \text{if}\ x\neq 0,\\
	0 & \text{otherwise}
	\end{array}\right.\\
	&f_2^\prime(x)=\left\{\begin{array}{ll}
	3x^2 \cos(\frac{1}{x}) + x\sin (\frac{1}{x}) & \text{if}\ x\neq 0,\\
	0 & \text{otherwise}
	\end{array}\right.\\
	&f_3^\prime(x)=3x^2.
	\end{align*}
	For any open set $U_0$ containing $x_0=0$ we have $\rank \{ f_1^\prime(x),f_2^\prime(x),f_3^\prime(x)\}<3$, $x\in U_0$. Hence, functions $f_1,f_2,f_3$ are functionally dependent on $U_0$ in the sense of Definition \ref{def:Laszlo}. 
	
	Let $U(x_0)$ be any neighbourhood of $x_0$ and let $F:\ \mathbb{R}^3\rightarrow \mathbb{R}$ be any $C^1$ function such that $F(f(U(x_0)))=0$, where $f(x)=[f_1(x),f_2(x),f_3(x)]$. Let $y_0=f(x_0)=(0,0,0)$ and $t_n=\frac{1}{\frac{\pi}{2}+2n \pi }$. Then
	$$F(t_n^3[1,0,1])=F(t_n^3,0,t_n^3)=F(f_1(t_n),f_2(t_n),f_3(t_n))=0$$ for sufficiently large $n$ and
	\begin{equation*}
	F^\prime(y_0,[1,0,1])=\lim_{t\rightarrow 0} \frac{F(t[1,0,1])}{t}=\lim_{n\rightarrow +\infty} \frac{F(t_n^3[1,0,1])}{t_n^3}= \lim_{n\rightarrow +\infty} \frac{0}{t_n^3}=0.
	\end{equation*}
	Analogously one can show that
	\begin{equation*}
	F^\prime(y_0,[0,1,1])=0,\quad 
	F^\prime(y_0,[-1,0,1])=0.
	\end{equation*}
	Hence $D F(y_0)=0$, i.e functions $f_1,f_2,f_3$ cannot be functionally dependent at $x_0$ in the sense of Definition \ref{def:manifolds}.
\end{example}

\begin{remark}\label{remark:whirpool}
	Let $f=[f_1,\dots, f_\kappa]:\ \mathbb{R}^k\rightarrow\mathbb{R}^\kappa$ be of class $C^1$ in a neighbourhood of $x_0\in \mathbb{R}^k$. 
	%Let $B$ be the set of critical points of $f$, i.e. $B:=\{x\in \mathbb{R}^k \mid \rank \{ D f_i(x),\ i=1,\dots,k \}<k\}$. Let $x_0\in B$. 
	If for any neighbourhood  $U(x_0)$ there exist $v_1,\dots,v_\kappa\in \mathbb{R}^\kappa$ linearly independent and a sequence $t_m>0$, $t_m\rightarrow0$ such that $ f( U(x_0))\cap (f(x_0)+t_m v_i)\neq \emptyset$ for all $m\in \mathbb{N}$, then functions $f_1,\dots, f_\kappa$ are functionally independent at $x_0$ in the sense of Definition \ref{def:manifolds}.
	
	%If there exists $v_1,\dots,v_n\in \mathbb{R}^n$, linearly independent such that for any neighbourhood  $U(x_0)$  exists $t_i>0$ such that $ f( U(x_0))\cap (f(x_0)+tv_i)\neq \emptyset$, then functions $f_1,\dots, f_n$ are functionally independent at $x_0$ in the sense of Definition \ref{def:manifolds}.
\end{remark}
It is  clear, that Proposition \ref{proposition:functional_independence2} implies Remark \ref{remark:whirpool}, but the  converse does not hold (as in Example \ref{example:whirpool}). 

%In our opinion investigation of relationships between   Definition \ref{def:manifolds} and Definition \ref{def:Raghavan} requires more advanced tools and is beyond the scope of the present paper.
%To show the implication Definition \ref{def:Raghavan} $\implies$ Definition \ref{def:manifolds} one needs to construct a function $F$ with $DF(y_0)\neq 0$ by assuming the nowhere density of the set of  critical values of $F$ (Definition \ref{def:Raghavan}). On the other hand, we are not able to construct a counterexample.

In conclusion, the established relationships between above concepts of functional dependence at $x_0$ are illustrated in the following graph.
%for functions $f_1,\dots,f_n:\ \mathbb{R}^n\rightarrow \mathbb{R}$, the class of functions, functionally independent in the sense of Definition \ref{def:manifolds} is broader than classes of functions, functionally independent in the sense of Definitions \ref{def:Boothby}, \ref{def:Raghavan}, \ref{def:Laszlo}.

%\begin{example}
%	Let $f_1,f_2:\ \mathbb{R}^2\rightarrow \mathbb{R}$ be defined as follows:
%	\begin{align*}
%	&f_1(x_1,x_2)=\left\{\begin{array}{ll}
%	0 & \text{if}\ x_1 <0\\
%	x_1^2 \sin^2 x_1	& \text{if}\ x_1\geq 0
%	\end{array}\right.\\
%	&f_2(x_1,x_2)=\left\{\begin{array}{ll}
%	0 & \text{if}\ x_2<0\\
%	x_2^2 \sin^2 x_2	& \text{if}\ x_2\geq 0.
%	\end{array}\right.
%	\end{align*}
%	Consider any neighbourhood $V$ of  $x_0=(0,0)$. Then there exists $\varepsilon>0$ such that $f(V)=(f_1(V),f_2(V))$ contains the set $L=\{ (x_1,x_2)\in \mathbb{R}^2 \mid x_1\in [0,\varepsilon),\ x_2=0  \}\cup \{ (x_1,x_2)\in \mathbb{R}^2 \mid x_2\in [0,\varepsilon),\ x_1=0  \}$. Every function $F$, of class $C^1$, such that $F\equiv 0$ on $L$ has the property $DF(f_1(0,0),f_2(0,0))=DF(0,0)=0$. Hence, $f_1,f_2$ can  not be functionally dependent in the sense of Definition \ref{def:manifolds}. However, functions $f_1,f_2$ are functionally dependent in the sense of Sz\'{e}kelyhidi at $x_0$.
%\end{example}
\begin{figure}[H]
	\centering
	\begin{tikzpicture}[node distance=2cm, auto]\
	\node[draw] at (0,0) (DefBoothby) [text width=2.5cm] {Definition \ref{def:Boothby}
		%set $\multifC(\bar{p})$ is $RCRCQ$-regular at $\bar{x}\in \multifC(\bar{p})$ 
	};
	\node[draw] at (0,2) (DefManifolds) [text width=2.5cm] {Definition \ref{def:manifolds}};
	\node[draw] at (7,2) (DefRaghavan) [text width=2.5cm] {Definition \ref{def:Raghavan} };
	\node[draw] at (7,0) (DefLaszlo) [text width=3.5cm] {Definition \ref{def:Laszlo} on a neighbourhood of $x_0$};
	\draw[->] (DefManifolds) to node {} (DefBoothby);
	\draw[->] (DefRaghavan) to [in=40,out=-130] node [xshift=0.3cm]{Proposition \ref{propostion:Raghavan_imples_Boothby}} (DefBoothby);
	\draw[<->,dashed] (DefLaszlo) to  node {\cite[Theorem 4.1.3]{Ordinary_and_partial_diff-eq_for_beginner_Laszlo} } (DefBoothby);		
	\draw[->] (DefLaszlo) to [in=-45,out=130] node [xshift=-0.5cm,yshift=0.85cm]{
		\begin{tabular}{c}
		Example \ref{example:whirpool}\\
		$\not$
		\end{tabular} }  (DefManifolds);
	\draw[->] (DefBoothby) to [out=130, in=230] node [yshift=0.1cm]{
		Example \ref{example:functional_dependence} $\not$}  (DefManifolds);
	\draw[->] (DefRaghavan) to [out=-90, in=90] node [yshift=0.15cm]{
		Proposition \ref{proposition:Raghavan_to_Laszlo}}  (DefLaszlo);
	\draw[->] (DefRaghavan) to [out=160, in=25] node [yshift=0.75cm]{
		\begin{tabular}{c}
		Example \ref{example:not_manifolds}\\
		$\not$
		\end{tabular}}  (DefManifolds);
	%	\draw[->] (R-regular) to node %[swap]
	%	{\begin{tabular}{l}Henrion, Outrata\\ \cite[Lemma 1]{calmess_of_constraint_systems_Henrion_Outrata}\end{tabular}} (calm);
	%	\draw[<->,dashed] (calm) to node %[xshift=0.5cm,yshift=-0.3cm]
	%	{	\begin{tabular}{l}
	%		Dontchev, Rockafellar\\ \cite[Theorem 3H.3]{implicit_functions_and_solution_mapping} \end{tabular}	} (calm-G);
	%	\draw[->,dashed] (RCR-regular) to node {\begin{tabular}{l}Minchenko, Stakhovski\\ \cite[Theorem 3]{om_relaxed_constant_rank_regularity_condition}\end{tabular} } (R-regular);
	%	\draw[->]  (calm) to [out=170, in=190] node [yshift=0.cm]{$\not$} (R-regular);
	\end{tikzpicture}
	\caption{ Relationships between functional dependence concepts. Theorem 4.1.3 of \cite{Ordinary_and_partial_diff-eq_for_beginner_Laszlo} is given in the case $f_1,\dots,f_\kappa:\ \mathbb{R}^\kappa\rightarrow\mathbb{R}$. %Proposition \ref{proposition:Raghavan_to_Laszlo} is stated in reflexive Banach spaces.
	} 
\end{figure}
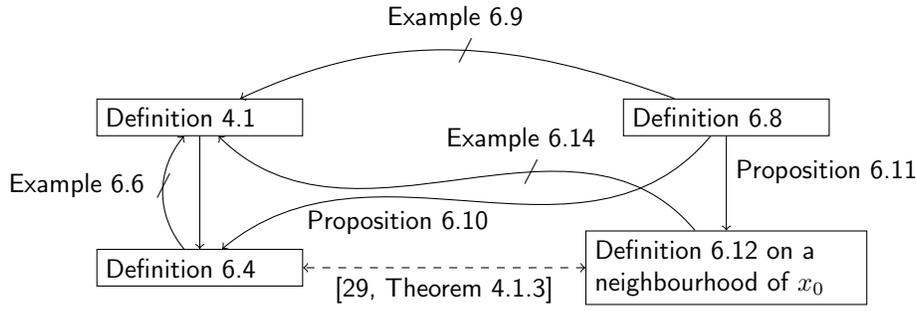

\section{Relaxed Constant Rank Constraint Qualification and Lagrange multipliers}\label{section:relaxed_constant_rank_and_lagrange_multipliers}

In this section we prove the non-emptiness of Lagrange multipliers set at a local minimum under RCRCQ and as an application we show that the linearized problem \eqref{problem:P} at a local minimum is solvable.

Let us consider the problem \eqref{problem:P},
\begin{equation}\tag{P}
\begin{array}{ll}
\text{minimize } & h_0(x) \\
\text{s.t. } & x\in {\mathcal F},
\end{array}
\end{equation}
where ${\mathcal F}$ is of the form \eqref{set:F}, i.e. 
\begin{equation}
\begin{array}{ll}
{\mathcal F}:=\{ x\in \hilbertH \mid h_i(x)=0, i\in I_0,\ h_i(x)\leq 0,\ i\in I \},
\end{array}
\end{equation}
where $\hilbertH$ is a Banach space,  $h_0, h_i:\ \hilbertH \rightarrow \mathbb{R}$, $i\in I_0\cup I$ are of class $C^1$ in a neighbourhood of a given local minimum $x_0\in {\mathcal F}$ of $h_0$ and sets $I_0, I$ are finite ($I_0\cup I=\{1,\dots,n\}$).

Let $\Lambda(x)$ be the set of Lagrange multipliers at  $x\in {\mathcal F}$, i.e.
\begin{equation*}
\Lambda(x):=\{\lambda\in \mathbb{R}^n \mid D_x L(\lambda,x)=0,\ \lambda_i\geq 0 \text{ and } \lambda_i h_i(x)=0,\ i\in I  \},
\end{equation*}
where
\begin{equation*}
L(\lambda,x):=h_0(x)+\sum_{i=1}^n \lambda_i  h_i(x), \quad \lambda=(\lambda_1,\dots,\lambda_n)
\end{equation*}
is the Lagrangean to problem \eqref{problem:P}. The following proposition relates RCRCQ to the existence of Lagrange multipliers. More on this topic see \cite{doi:10.1080/02331934.2019.1696339}.

\begin{proposition}\label{proposition:nonmpty_lagrange}
	Let $x_0\in {\mathcal F}$ be a local minimum of problem \eqref{problem:P} and let  RCRCQ hold for ${\mathcal F}$ at $x_0$. Then $\Lambda(x_0)\neq \emptyset$.
\end{proposition}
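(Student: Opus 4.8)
The plan is to reduce to the equality-constrained case already handled by the Abadie condition (Theorem~\ref{theorem:tangent_cone}) together with the classical Ljusternik-type surjectivity, and then apply a finite-dimensional separation / Farkas argument on the active constraints. First I would restrict attention to the active index set $I(x_0)=\{i\in I\mid h_i(x_0)=0\}$ and set $J_0:=I_0\cup I(x_0)$; inactive inequality constraints play no role in the first-order conditions since their multipliers are forced to be $0$ by complementarity, so it suffices to produce $\lambda_i$, $i\in J_0$, with $Dh_0(x_0)+\sum_{i\in J_0}\lambda_i Dh_i(x_0)=0$, $\lambda_i\ge 0$ for $i\in I(x_0)$. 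By RCRCQ, for the particular index set $J=J_0$ the family $\{Dh_i(x_0),\ i\in J_0\}$ has some rank $k$; choose a maximal linearly independent subfamily $\{Dh_{i_1}(x_0),\dots,Dh_{i_k}(x_0)\}$, and by Remark~\ref{crc-remark} every other $Dh_i(x_0)$, $i\in J_0$, is a linear combination of these $k$ functionals, with the same true in a neighbourhood (this is exactly the setting in which Proposition~\ref{proposition:functional_dependence} and Theorem~\ref{theorem:tangent_cone} operate).

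Next I would use the first-order optimality at $x_0$ in terms of the tangent cone: since $x_0$ is a local minimum of $h_0$ on ${\mathcal F}$, one has $\langle Dh_0(x_0), d\rangle\ge 0$ for every $d\in T_{\mathcal F}(x_0)$. By Theorem~\ref{theorem:tangent_cone}, $T_{\mathcal F}(x_0)=\Gamma_{\mathcal F}(x_0)$, so $\langle Dh_0(x_0),d\rangle\ge 0$ for all $d$ with $\langle Dh_i(x_0),d\rangle=0$, $i\in I_0$, and $\langle Dh_i(x_0),d\rangle\le 0$, $i\in I(x_0)$. Now I would reduce this infinite-dimensional linear inequality system to a finite-dimensional one: the functionals $Dh_i(x_0)$, $i\in J_0\cup\{0\}$, together span a finite-dimensional subspace of $\hilbertH^*$; passing to the annihilator, the conditions ``$\langle Dh_0(x_0),d\rangle\ge 0$ whenever $d$ lies in the polyhedral cone cut out by the $Dh_i(x_0)$'' depend only on the (finitely many) values these functionals take, so by the finite-dimensional Farkas lemma (generalized Motzkin/Farkas for mixed equality–inequality systems) there exist scalars $\lambda_i$, $i\in I(x_0)$, with $\lambda_i\ge 0$, and $\lambda_i\in\mathbb{R}$, $i\in I_0$, such that
\begin{equation*}
Dh_0(x_0)+\sum_{i\in J_0}\lambda_i Dh_i(x_0)=0
\end{equation*}
in $\hilbertH^*$. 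Setting $\lambda_i:=0$ for $i\in I\setminus I(x_0)$ then gives $\lambda\in\Lambda(x_0)$, because $D_xL(\lambda,x_0)=0$, $\lambda_i\ge 0$ for $i\in I$, and $\lambda_i h_i(x_0)=0$ for all $i\in I$ (either $i$ is active so $h_i(x_0)=0$, or $\lambda_i=0$).

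The step I expect to require the most care is the passage from ``$x_0$ is a local minimum'' to the variational inequality $\langle Dh_0(x_0),d\rangle\ge 0$ on $T_{\mathcal F}(x_0)$, and, relatedly, making sure the Farkas-type argument is legitimate in $\hilbertH^*$ rather than only on the values of the functionals. For the first point one uses the definition of $T_{\mathcal F}(x_0)$: for $d\in T_{\mathcal F}(x_0)$ there are $\varepsilon>0$ and $o(t)$ with $\|o(t)\|/t\to 0$ and $x_0+td+o(t)\in{\mathcal F}$, so $h_0(x_0)\le h_0(x_0+td+o(t))=h_0(x_0)+t\langle Dh_0(x_0),d\rangle+o(t)$, whence $\langle Dh_0(x_0),d\rangle\ge 0$; here the enhanced conclusion of Theorem~\ref{theorem:tangent_cone} (that one may take the remainder $r(t)$ realizing $d\in\Gamma_{\mathcal F}(x_0)$) is what makes $\Gamma_{\mathcal F}(x_0)\subset T_{\mathcal F}(x_0)$ usable. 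For the second point, I would argue that since $\ker Dh_0(x_0)\supset\bigcap_{i\in J_0}\{d:\langle Dh_i(x_0),d\rangle=0,\ i\in I_0\}\cap\{d:\langle Dh_i(x_0),d\rangle\le 0,\ i\in I(x_0)\}$ forces, by a standard linear-algebra/closedness argument on the finitely many functionals $Dh_i(x_0)$ (their joint kernel has finite codimension, so the relevant cones are closed and the finite Farkas lemma applies verbatim to the induced system on the finite-dimensional quotient $\hilbertH/\bigcap_{i\in J_0}\ker Dh_i(x_0)$), the desired representation of $Dh_0(x_0)$ as a nonnegative-on-$I(x_0)$ combination of $\{Dh_i(x_0):i\in J_0\}$.
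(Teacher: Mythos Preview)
Your proposal is correct and follows the same route as the paper: Theorem~\ref{theorem:tangent_cone} gives $T_{\mathcal F}(x_0)=\Gamma_{\mathcal F}(x_0)$, local minimality yields $\langle Dh_0(x_0),d\rangle\ge 0$ on $\Gamma_{\mathcal F}(x_0)$, and a Farkas-type step then produces the multipliers. The only cosmetic differences are that the paper handles the last step by computing $(\Gamma_{\mathcal F}(x_0))^\circ$ directly from a cited polar-cone formula for finitely generated cones rather than your quotient-space Farkas reduction, and it omits your first paragraph (selecting a maximal independent subfamily via Remark~\ref{crc-remark} and Proposition~\ref{proposition:functional_dependence}), which plays no role in either argument.
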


\begin{proof}
	%	Let $d\in T_F(x_0)$. By the definition of tangent set there is $t_n \downarrow 0$ and $x_n=x_0+ t_n d + o(t_n)$, such that $x_n\in F$.
	% Since $x_0$ is local minimum of (P), 
	%	$f(x_n)-f(x_0)\ge 0$
	By Theorem \ref{theorem:tangent_cone}, for any $d\in \Gamma_{\mathcal F} (x_0)$ there exists a vector function $o(t)$ such that $\lim_{t\rightarrow 0} \|o(t)\|t^{-1}=0$  and $x_0+td+o(t)\in {\mathcal F}$. Since $x_0$ is a local minimum of $h_0$ on ${\mathcal F}$ we have  $h_0(x_0+td+o(t))-h_0(x_0)\geq 0$ for all $t$ sufficiently small. By Taylor expansion, we have
	\begin{align*}
	0&\leq h_0(x_0+td+o(t))-h_0(x_0)\\
	&=h_0(x_0)+\langle D h_0(x_0+\theta(td+o(t))) \, , \, td+o(t)\rangle-h_0(x_0) \\
	&=t\langle D h_0(x_0+\theta(td+o(t))) \, , \, d\rangle+\langle D h_0(x_0+\theta(td+o(t))) \, , \, o(t)\rangle,
	\end{align*}
	where $\theta\in [0,1]$ and $\theta$ depends on $t,d$. Hence,
	\begin{equation}\label{ineq:local_min}
	\langle D h_0(x_0+\theta(td+o(t))) \, , \, d\rangle\geq -\langle D h_0(x_0+\theta(td+o(t))) \, , \, o(t)t^{-1}\rangle.
	\end{equation}
	By passing to the limit with $t\rightarrow 0$ in \eqref{ineq:local_min} we obtain $\langle D h_0(x_0) \, , \, d \rangle \geq 0$.
	Hence 
	\begin{equation}
	\label{nabla}
	-D h_0(x_0) \in (\Gamma_{\mathcal F}(x_0))^\circ,
	\end{equation} where $ (\Gamma_{\mathcal F}(x_0))^\circ$ is dual cone defined as $$ (\Gamma_{\mathcal F}(x_0))^\circ :=\{d^*\in \hilbertH^* \mid \langle d^*, d\rangle \le 0, \forall d \in  \Gamma_{\mathcal F}(x_0)\}.$$
	
	% 	Let $\Lambda(x)$ be the set of Lagrange multipliers at a point $x\in F$, i.e.
	%		\begin{equation*}
	%		\Lambda(x):=\{\lambda\in \mathbb{R}^n \mid D_x L(\lambda,x)=0,\ \lambda_i\geq 0 \text{ and } \lambda_i h_i(x)=0,\ i\in I  \},
	%		\end{equation*}
	%		where 
	%		\begin{equation*}
	%		L(\lambda,x):=f(x)+\sum_{i=1}^n \lambda_i  h_i(x), \quad \lambda=(\lambda_1,\dots,\lambda_n),\ \quad h=(h_1,\dots,h_n).
	%		\end{equation*}
	Since
	\begin{equation*}
	\Gamma_{\mathcal F}(x_0)=\left\{ d\in \hilbertH\ \bigg|\  	\begin{array}{ll}
	\langle D h_i(x_0) \, , \, d \rangle \leq 0,  & i\in I(x_0),\\
	\langle D h_i(x_0) \, , \, d \rangle \leq 0,   & i\in I_0,\\
	\langle -D h_i(x_0) \, , \, d \rangle \leq 0,   & i\in I_0
	\end{array}\right\},
	\end{equation*}
	by 	\cite[Theorem 6.40]{deutsch2001best},
	%	\footnote{For each $i\in I_0$ we threat $\langle D h_i(x_0) \, , \, d \rangle = 0$ as two inequalities: $\langle D h_i(x_0) \, , \, d \rangle \leq 0$ and $\langle -D h_i(x_0) \, , \, d \rangle \leq 0$.} 
	the dual cone to $\Gamma_{\mathcal F}(x_0)$ is given as follows
	\begin{align}
	\label{nabla2}
	\begin{aligned}
	(\Gamma_{\mathcal F}(x_0))^\circ = \{ d^*\in \hilbertH^* \mid d^*=&\sum_{i\in I_0\cup I(x_0)} \lambda_i  D h_i(x_0),\\
	& \lambda_i\geq 0,\ i \in I(x_0),\ \lambda_i \in \mathbb{R},\ i\in I_0   \}.
	\end{aligned}
	\end{align}
	By \eqref{nabla} and \eqref{nabla2}, there exist $\lambda_i\geq 0,\ i \in I(x_0),\ \lambda_i \in \mathbb{R},\ i\in I_0$ such that 
	$$-D h_0(x_0) =\sum_{i\in I_0\cup I(x_0)} \lambda_i  D h_i(x_0).
	$$
	By putting $\lambda_i=0$ for $i\in \{1,2,\dots,n\}\setminus (I_0\cup I(x_0))$, we have $\Lambda(x_0)\neq \emptyset.$		%\qed
\end{proof}

As an application of Proposition \ref{proposition:nonmpty_lagrange} we show that the linearized problem,  %a consequence of Proposition \ref{proposition:nonmpty_lagrange}.
at a local minimum to \eqref{problem:P}, $x_0 \in {\mathcal F}$, %and  consider the linearized problem
\begin{equation}\label{problem:primal}
\begin{array}{lll}
\text{minimize} & \langle D h_0(x_0) \, , \, d \rangle\\
\text{s.t.} & \langle D h_i(x_0) \, , \, d \rangle \leq 0,  & i\in I(x_0),\\
& \langle D h_i(x_0) \, , \, d \rangle =0 ,  & i\in I_0
\end{array}
\end{equation}
is solvable with a solution $d=0$.
Problem \eqref{problem:primal} can be equivalently rewritten as
\begin{equation}\label{problem:primal2}
\begin{array}{lll}
\text{minimize} & \langle D h_0(x_0) \, , \, d \rangle\\
\text{s.t.} & \langle D h(x_0) \, , \, d \rangle \in K,\\
\end{array}
\end{equation}	
where $h=[h_i]_{i\in I(x_0)\cup I_0}$, $K=\{ k=(k_i) \in \mathbb{R}^{|I_0 \cup I(x_0)|} \mid k_i\leq 0, i\in I(x_0),\ k_i=0,\ i \in I_0  \}$.
Lagrangian to \eqref{problem:primal2} is defined as follows
\begin{equation}
L(d,\lambda)=\langle D h_0(x_0) \, , \, d \rangle+\langle \lambda \, , \, D h(x_0)  d  \rangle,
\end{equation}
where $\lambda \in \mathbb{R}^{|I_0\cup I(x_0)|}$.
Let $K^*:=\{ k^*=(k_i^*) \in  \mathbb{R}^{|I_0\cup I(x_0)|} \mid \langle k \, , \, k^* \rangle \leq 0 \text{ for all } k\in K  \}=\{ k^* \in  \mathbb{R}^{|I_0\cup I(x_0)|}  \mid k_i^*\geq 0,\ i\in I(x_0),\ k_i^*\in \mathbb{R},\ i \in I_0    \} .$
The dual to \eqref{problem:primal2} takes the form
\begin{equation}\label{problem:dual}
\begin{array}{ll}
\text{maximize}& \inf_{d\in \hilbertH} L(d,\lambda )\\
\text{s.t}& \lambda \in K^*.
\end{array}		
\end{equation}
Let us consider the objective of the dual. We have
\begin{align*}
\inf_{d\in \hilbertH} L(d,\lambda )&=\inf_{d\in \hilbertH} \left\{ \langle D h_0(x_0) \, , \, d \rangle+\langle \lambda \, , \,  D h(x_0)  d  \rangle\right\}\\
&=\inf_{d\in \hilbertH} \left\{ \langle D h_0(x_0) \, , \, d \rangle+\sum_{i\in I_0\cup I(x_0)} \lambda_i  \langle D h_i(x_0) \, , \,  d  \rangle\right\}\\
&=\inf_{d\in \hilbertH} \left\{\langle D h_0(x_0)+\sum_{i\in I_0\cup I(x_0)} \lambda_i D h_i(x_0) \, , \, d \rangle\right\}.
\end{align*}
Hence,
\begin{align*}
\inf_{d\in \hilbertH} \langle D h_0(x_0)&+\sum_{i\in I_0\cup I(x_0)} \lambda_i D h_i(x_0) \, , \, d \rangle\\
&=\left\{ \begin{array}{ll}
-\infty & \text{if}\  D h_0(x_0)+\sum_{i\in I_0\cup I(x_0)} \lambda_i D h_i(x_0)\neq 0, \\
0 & \text{if}\  D h_0(x_0)+\sum_{i\in I_0\cup I(x_0)} \lambda_i D h_i(x_0)=0.
\end{array}\right.
\end{align*}
Thus \eqref{problem:dual} is equivalent to the following
\begin{equation}\label{problem:dual2}
\begin{array}{ll}
\text{maximize}& 0\\
\text{s.t}&  D h_0(x_0)+\sum_{i\in I_0\cup I(x_0)} \lambda_i D h_i(x_0)=0,\\
&\lambda \in K^*.
\end{array}		
\end{equation}
In conclusion, we obtain the following corollary.
\begin{corollary}
	Under assumption of RCRCQ at $x_0\in {\mathcal F}$, where $x_0$ is a local minimum of \eqref{problem:P}, the element $d=0$ is a solution of \eqref{problem:primal}, since, by Proposition \ref{proposition:nonmpty_lagrange}, feasible set of \eqref{problem:dual2} is nonempty.
\end{corollary}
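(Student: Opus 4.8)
The plan is to deduce the corollary from Proposition \ref{proposition:nonmpty_lagrange} together with weak Lagrangian duality between \eqref{problem:primal2} and \eqref{problem:dual2}. First I would check that $d=0$ is feasible for \eqref{problem:primal}: since $\langle D h_i(x_0)\, , \, 0\rangle = 0$, the constraints $\langle D h_i(x_0)\, , \, d\rangle\le 0$ ($i\in I(x_0)$) and $\langle D h_i(x_0)\, , \, d\rangle = 0$ ($i\in I_0$) all hold, and the objective at $d=0$ equals $\langle D h_0(x_0)\, , \, 0\rangle = 0$. Hence the optimal value of \eqref{problem:primal} is at most $0$, and the whole content of the statement is that this value is in fact $0$, i.e. that no feasible $d$ yields a strictly negative value.

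Next I would invoke Proposition \ref{proposition:nonmpty_lagrange}: because RCRCQ holds at the local minimum $x_0$, the set $\Lambda(x_0)$ is nonempty, so there exist $\lambda_i\ge 0$ for $i\in I(x_0)$ and $\lambda_i\in\mathbb R$ for $i\in I_0$ with $D h_0(x_0)+\sum_{i\in I_0\cup I(x_0)}\lambda_i D h_i(x_0)=0$. This is exactly the assertion that $(\lambda_i)_{i\in I_0\cup I(x_0)}$ lies in the feasible set of \eqref{problem:dual2} — one only has to note that $D_x L(\lambda,x_0)=0$ unwinds to this linear identity and that the sign conditions on $\lambda$ are precisely membership in $K^{*}$. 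Since the objective of \eqref{problem:dual2} is identically $0$, its optimal value equals $0$. By the weak duality inequality between \eqref{problem:primal2} (equivalently \eqref{problem:primal}) and \eqref{problem:dual2} — which is immediate from the displayed identities, as $\inf_{d\in\hilbertH}L(d,\lambda)\le \langle D h_0(x_0)\, , \, d\rangle$ for every primal-feasible $d$ and every dual-feasible $\lambda$ — the optimal value of \eqref{problem:primal} is bounded below by $0$.

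Combining the two bounds, the optimal value of \eqref{problem:primal} equals $0$ and is attained at $d=0$, so $d=0$ is a solution. Alternatively, and more directly, I would bypass duality and argue pointwise: for any $d$ feasible for \eqref{problem:primal}, the multiplier identity gives $\langle D h_0(x_0)\, , \, d\rangle = -\sum_{i\in I_0\cup I(x_0)}\lambda_i\langle D h_i(x_0)\, , \, d\rangle\ge 0$, since $\lambda_i\langle D h_i(x_0)\, , \, d\rangle\le 0$ for $i\in I(x_0)$ (where $\lambda_i\ge0$ and $\langle D h_i(x_0)\, , \, d\rangle\le0$) and $\langle D h_i(x_0)\, , \, d\rangle=0$ for $i\in I_0$; hence no feasible $d$ improves on the value $0$ attained at $d=0$. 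There is essentially no obstacle in this proof; the only delicate point is the translation ``$\Lambda(x_0)\neq\emptyset$'' $\Longleftrightarrow$ ``feasible set of \eqref{problem:dual2} is nonempty'', which is a direct rewriting of $D_x L(\lambda,x_0)=0$ and the condition $\lambda\in K^{*}$.
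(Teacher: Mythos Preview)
Your proposal is correct and follows essentially the same route as the paper: the paper derives \eqref{problem:dual2} as the dual of \eqref{problem:primal2}, observes that Proposition~\ref{proposition:nonmpty_lagrange} makes the feasible set of \eqref{problem:dual2} nonempty with dual value $0$, and then (implicitly via weak duality) concludes that $d=0$ solves \eqref{problem:primal}. Your write-up simply makes the weak-duality step and the feasibility of $d=0$ explicit, and your alternative ``pointwise'' argument is a valid shortcut not spelled out in the paper.
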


\bibliographystyle{plain}
\bibliography{mybibfile}  

\end{document}